\def\Aut{\operatorname{Aut}}
\def\End{\operatorname{End}}
\def\newspan{\operatorname{span}}
\def\id{\operatorname{id}}
\def\id{\operatorname{id}}
\def\op{\operatorname{op}}
\def\C{\mathbb{C}}
\def\N{\mathbb{N}}
\def\Z{\mathbb{Z}}
\def\TT{\mathcal{T}}
\def\LL{\mathcal{L}}
\def\OO{\mathcal{O}}
\def\KK{\mathcal{K}}
\def\AA{\mathcal{A}}
\def\DD{\mathcal{D}}
\def\PP{\mathcal{P}}
\def\FF{\mathcal{F}}
\def\EE{\mathcal{E}}
\def\JJ{\mathcal{J}}
\def\QQ{\mathcal{Q}}
\def\NT{\mathcal{N}\mathcal{T}}
\def\gxp{G\rtimes_\theta P}
\newtheorem{thm}{Theorem}[section]
\newtheorem{cor}[thm]{Corollary}
\newtheorem{lemma}[thm]{Lemma}
\newtheorem{prop}[thm]{Proposition}
\theoremstyle{definition}
\newtheorem{definition}[thm]{Definition}
\newtheorem{notation}[thm]{Notation}
\theoremstyle{remark}
\newtheorem{remark}[thm]{Remark}
\newtheorem{example}[thm]{Example}
\numberwithin{equation}{section}
\begin{document}

\title{\texorpdfstring{$C^*$-algebras of algebraic dynamical systems and right LCM semigroups}{C*-algebras of algebraic dynamical systems and right LCM semigroups}}

\author{Nathan Brownlowe}
\address{School of Mathematics and Statistics \\ The University of Sydney \\ NSW 2006\\ Australia}
\email{nathan.brownlowe@sydney.edu.au}

\author{Nadia S.~Larsen}
\address{Department of Mathematics \\ University of Oslo \\ PO BOX 1053 Blindern\\ 0316 Oslo\\ Norway}
\email{nadiasl@math.uio.no}

\author{Nicolai Stammeier}
\address{Department of Mathematics \\  University of Oslo \\ PO BOX 1053 Blindern\\ 0316 Oslo\\ Norway}
\email{nicolsta@math.uio.no}

\begin{abstract}
We introduce algebraic dynamical systems, which consist of an action of a right LCM semigroup by injective endomorphisms of a group. To each algebraic dynamical system we associate a $C^*$-algebra and describe it as a semigroup $C^*$-algebra. As part of our analysis of these $C^*$-algebras we prove results for right LCM semigroups. More precisely we discuss functoriality of the full semigroup $C^*$-algebra and compute its $K$-theory for a large class of semigroups. We introduce the notion of a Nica-Toeplitz algebra of a product system over a right LCM semigroup, and show that it provides a useful alternative to study algebraic dynamical systems.
\end{abstract}

\date{16 December, 2016. Revised: 25 April 2017}
\subjclass[2010]{46L05, 46L80, 46L55.}
\maketitle

\section{Introduction}\label{sec: intro}
\noindent Dynamical systems arising from injective endomorphisms of countable, discrete groups have
provided an important source of algebraic data from which to build
$C^*$-algebras. The case of group automorphisms can by now be considered classical, and is modelled
$C^*$-algebraically by the much-studied crossed product. More recent work in
the area has focussed on injective endomorphisms which are not surjective. The
study of $C^*$-algebras associated to a single injective endomorphism of a group with finite
cokernel includes Hirshberg's work \cite{Hir} on endomorphisms of amenable groups,
and Cuntz and Vershik's work \cite{CV} on endomorphisms of abelian groups, while the situation with an injective endomorphism with infinite cokernel was considered by Vieira \cite{Vie}.

Through recent work in \cite{BLS1,Sta1} the authors have sought to build a
unifying framework for the above constructions, while at the same time
significantly broadening the scope of the theory. In \cite{Sta1} the
third-named author generalised the single-endomorphism setting to actions
of semigroups of injective endomorphisms via the notion of an
{\em irreversible algebraic dynamical system}, which consists of an action
of a countably generated, free abelian monoid $P$ (with identity) by injective group
endomorphisms subject to an independence condition which originated from \cite{CV}. A construction of very similar flavour was proposed by the first two authors, see \cite[Example 5.10]{HLS} for details. The $C^*$-algebra from \cite{Sta1} associated to an irreversible algebraic dynamical system has a presentation by generators and relations that encodes the action, and can be viewed alternatively  as a Cuntz-Nica-Pimsner algebra.
In \cite{BLS1} the authors looked at similar actions $\theta:P\curvearrowright G$, and
studied them via the semidirect product semigroup $\gxp$. The focus of \cite{BLS1} is on the semigroup
$C^*$-algebra, in the sense of Li \cite{Li1}, associated to $\gxp$.

In many cases, the semidirect products $\gxp$ from \cite{BLS1} are examples of {\em right LCM
semigroups}. These are left cancellative semigroups with the property that principal right ideals
intersect to be either empty or another principal right ideal. Their $C^*$-algebras, both the semigroup $C^*$-algebra and a boundary-quotient type $C^*$-algebra, have  garnered recent attention \cite{Nor,BRRW,Star}. Right LCM semigroups lead to a
tractable and yet rich class of semigroup $C^*$-algebras that includes the $C^*$-algebras associated
to quasi-lattice ordered pairs as introduced by Nica \cite{Nic}, and the Toeplitz type
$C^*$-algebras associated to a self-similar actions in \cite{LRRW} as a counterpart to the Cuntz-Pimsner algebras from \cite{Nek}. In \cite{BLS1}, the authors pursued the study of the semigroup $C^*$-algebras of right LCM semigroups by establishing uniqueness results.

This paper serves two major purposes. The first, and our original
motivation for this work, is to significantly extend the work of \cite{Sta1}
by introducing the notion of an algebraic dynamical system and studying $C^*$-algebras that encode the action. An \emph{algebraic dynamical system} consists of an action
$\theta:P\curvearrowright G$ of a right LCM semigroup (with identity) $P$ by injective
endomorphisms of a group $G$ which harmonise with the structure of the principal right ideals in $P$. This setting is more general than the one considered in \cite{Sta1} and gives a large class of new examples.
To every algebraic dynamical system $(G,P,\theta)$ we associate a
$C^*$-algebra $\AA[G,P,\theta]$ that is universal for a unitary representation of $G$ and
a representation of $P$ by isometries satisfying relations which encode the dynamics. These relations lead to a Wick ordering on $\AA[G,P,\theta]$. Our first main result says that
$\AA[G,P,\theta]$ is canonically isomorphic to the full semigroup $C^*$-algebra of $\gxp$.
This explains and motivates our renewed interest in the study of the full semigroup $C^*$-algebra
of right LCM semigroups. Therefore, a continuation of the work in \cite{BLS1} in order to build a general theory of semigroup $C^*$-algebras associated to right LCM semigroups constitutes the second major purpose of this paper.

Conceptionally, $\AA[G,P,\theta]$ resembles a Toeplitz type crossed product for a generalised
Exel-Larsen system naturally arising from $(G,P,\theta)$. To make this idea precise we appeal to Fowler's product systems of right-Hilbert bimodules over semigroups \cite{Fow2}. Explicitly, we show that his theory, that was originally developed for quasi-lattice ordered pairs, can be taken a considerable step further to the setting of right LCM semigroups.

Motivated by our analysis of the $C^*$-algebra associated to algebraic
dynamical systems, we prove a number of results for right LCM semigroups:
\begin{itemize}[leftmargin=1cm]
\item[(1)] We investigate the functoriality of $S\mapsto C^*(S)$ and determine the right notion of a morphism for the category with objects right LCM semigroups so that the construction of the full semigroup $C^*$-algebra becomes functorial. We also characterise surjectivity of the induced morphism, and injectivity in the case that both left regular representations are faithful.

\item[(2)] We use the machinery of Cuntz, Echterhoff and Li \cite{CEL1} to show that, for a large
class of right LCM semigroups $S$, the $K$-theory of $C^*(S)$ is given by the $K$-theory of
$C^*_r(S^*)$, where $S^*$ denotes the group of units in $S$.

\item[(3)] We extend the notion of a compactly aligned product system of right-Hilbert bimodules, and
of a Nica covariant representation of such a product system, from the quasi-lattice ordered setting first
introduced by Fowler \cite{Fow2} to the setting of right LCM semigroups. This leads to the definition of a
Nica-Toeplitz algebra for product systems over right LCM semigroups.
\end{itemize}

Our description of $\AA[G,P,\theta]$ as the semigroup $C^*$-algebra
$C^*(\gxp)$ of the right LCM semigroup $\gxp$ allows us to apply the general theory from (1) and (2). We thus characterise
the morphisms between algebraic dynamical systems which induce morphisms of their
$C^*$-algebras, and we compute the $K$-theory of $\AA[G,P,\theta]$ for a large
class of algebraic dynamical systems $(G,P,\theta)$. We also apply the general theory built
in (3) to the setting of algebraic dynamical systems, where we construct a
compactly aligned product system $M$ from an algebraic dynamical system $(G,P,\theta)$ whose
Nica-Toeplitz algebra $\NT(M)$ is canonically isomorphic to $\AA[G,P,\theta]$. This allows us to conclude that, for right LCM semigroups $S$, $C^*(S)$ can be modelled as the Nica-Toeplitz algebra of the product system over $S$ with fibres $\C$. We would like to stress the point that there is an abundance of compactly aligned product systems $M$, built from algebraic dynamical systems, for which the left action on some (or all) of the fibres of $M$ is not by generalised compacts, as has frequently been the case in the context of quasi-lattice ordered pairs.

We finish with a section on examples, the most basic of which is the
system $(\Z,P,\cdot)$ in which $P\subset \Z^\times$ is generated by a family of relatively
prime nonzero integers acting on $\Z$ by multiplication. Despite the
straightforward nature of these systems, we show that they provide an
interesting class of $C^*$-algebras that warrant further study. Other
examples of algebraic dynamical systems include the more general $(R,P,\cdot)$
in which $R$ is a ring of integers in a number field and $P$ a right LCM
subsemigroup of $R^\times$ acting by multiplication, and shift systems
$(\bigoplus_P G_0,P,\theta)$ in which $G_0$ is countable, $P$ right LCM, and
$\theta$ the natural shift action on $\bigoplus_P G_0$.

The structure of the paper is as follows: in Section~\ref{sec: ADS and their C*-algebras} we introduce algebraic dynamical systems $(G,P,\theta)$ and their $C^*$-algebras $\AA[G,P,\theta]$. In
Section~\ref{sec: semigroup description} we address the question of functoriality
of $S\mapsto C^*(S)$, and in Section~\ref{sec: semigroup description for ADS} we prove that
$\AA[G,P,\theta]\cong C^*(\gxp)$, and apply the results of Section~\ref{sec: semigroup description} to
the context of $\AA[G,P,\theta]$. In Section~\ref{sec: K-theory} we prove that for a large
class of right LCM $S$ we have $K_*(C^*(S))\cong K_*(C_r^*(S^*))$, where $C_r^*(S^*)$ is the reduced
$C^*$-algebra of the group of units $S^*$ in $S$. In Section~\ref{sec: right LCM product systems}
we introduce the Nica-Toeplitz algebra for a product system over a right LCM semigroup, and
then in Section~\ref{sec: product system for ADS} we describe $\AA[G,P,\theta]$ as such a Nica-Toeplitz
algebra. We finish in Section~\ref{sec: examples}
with a discussion of examples of algebraic dynamical systems and their $C^*$-algebras.
The flow of the paper is a steady back and forth between algebraic dynamical systems
and general right LCM semigroups.
We feel that this arrangement reflects in a proper way the fruitful interplay between the two topics in the context of $C^*$-algebraic constructions.

We would like to point out that algebraic dynamical systems are not the same as \emph{algebraic dynamics}, a terminology going back to \cite{Sch1}. The objects of interest in this area are actions of countable, discrete groups on compact (abelian) groups, see \cite{LS} for a recent survey of highly interesting connections to other fields. We remark that algebraic dynamical systems $(G,P,\theta)$ can be thought of as algebraic dynamics provided that $G$ is abelian and $P$ is a group. We hope that our choice does not cause confusion, but rather stimulates an examination of the connections between these two concepts.

While this manuscript was under review, its contents served as fruitful grounds for \cite{BS1}, where many of the results from \cite{Sta1} are extended to algebraic dynamical systems, building heavily on \cite{Star}. This lead to a generalisation of the \emph{boundary quotient diagram} of \cite{BaHLR} from $\N\rtimes\N^\times$ to a broad class of right LCM semigroups, see \cite{Sta3}. The insights into the internal structure of right LCM semigroups gained from this construction then allowed the authors (in collaboration with Zahra Afsar) to determine the structure of equilibrium states on $C^*(S)$ for a natural dynamics for a surprisingly large class of right LCM semigroups $S$, encompassing $\N\rtimes \N^\times$, dilation matrices, self-similar group actions, Baumslag-Solitar monoids, and many arising as semidirect products from algebraic dynamical systems, see \cite{ABLS}.\vspace*{0mm}\\

\noindent\emph{Acknowledgements:} The present work has advanced substantially during multiple visits of the first and the third author to Oslo in 2013 and 2014, and we would like to thank the group in Oslo for their hospitality. This work has also benefited from a joint week spent at the BIRS mini-workshop 13w5152 in November 2013 and we are grateful to the organisers Alan Carey and Marcelo Laca for inviting the three of us. The third author acknowledges support provided by DFG through SFB $878$ and by ERC through AdG $267079$. We thank S. Echterhoff for directing our attention to \cite{CCH}.

\section{\texorpdfstring{Algebraic dynamical systems and their $C^*$-algebras}{Algebraic dynamical systems and their C*-algebras}}\label{sec: ADS and their C*-algebras}

\noindent In \cite[Definition 1.5]{Sta1}, the third author introduced the
notion of \emph{irreversible algebraic dynamical systems} and studied
associated $C^*$-algebras, which under mild assumptions turn out to be unital UCT Kirchberg algebras. Recall that an irreversible algebraic dynamical system
$(G,P,\theta)$ is given by a countable, discrete group $G$, a countably generated, free abelian semigroup $P$ with identity, and a $P$-action $\theta$ on $G$ by injective group endomorphisms such that
\[
\theta_p(G) \cap \theta_q(G) = \theta_{pq}(G) \text{ if and only if } pP
\cap qP = pqP.
\]

Note that due to the above displayed condition and injectivity of $\theta_p$, the group
$G$ needs to be infinite whenever $P$ is nontrivial. Moreover, the only group
automorphism of $G$ present in such a dynamical system is $\id_G$.\vspace*{0.5mm}\\

\noindent We are now
going to provide a vast generalisation of the concept of an irreversible
algebraic dynamical system $(G,P,\theta)$, where $P$ is allowed to be any
countable right LCM semigroup with identity, for example a group, and relations of
the images of $\theta_p$ and $\theta_q$ no longer imply relations for $pP$ and
$qP$. Recall that a semigroup $S$ is right LCM if it is left cancellative and
for any $p, q$ in $S$, the intersection
of principal right ideals  $pS\cap qS$ is either empty or of the form $rS$ for
some $r\in S$, see \cite{BRRW}.

All identities for semigroups will be denoted by $1$. If $S$ is a semigroup with identity, we let $S^*$ denote the group of invertible elements in $S$.

\begin{definition}\label{def:ADS}
An {\em algebraic dynamical system} is a triple $(G,P,\theta)$ consisting of a
countable, discrete group $G$, a countable, right LCM semigroup $P$ with
identity, and an action $\theta$ of $P$ by injective group endomorphisms of
$G$ such that 
for all $p,q \in P$:
\begin{equation}\label{eq:preserving the order}
\theta_p(G) \cap \theta_q(G) = \theta_r(G) \text{ whenever } r\in P \text{
satisfies } pP \cap qP = rP.
\end{equation}
\end{definition}

\noindent Following \cite{BLS1} we say that $\theta$
\emph{respects the order on} $P$ whenever \eqref{eq:preserving the order}  is
satisfied. Note that if $r' \in P$ satisfies $rP=r'P$,
then there is $x \in P^*$ such that $r' = rx$. Hence one has $\theta_{r'}(G) =
\theta_r(\theta_x(G)) = \theta_r(G)$ as $\theta_x$ is a group automorphism of
$G$.

The class of algebraic dynamical systems contains various types of examples, see Section~\ref{sec: examples}.
In particular, it includes the irreversible algebraic dynamical systems from  \cite{Sta1}. We showed in
\cite[Proposition 8.2]{BLS1} that $\gxp$ is a right LCM semigroup whenever $(G, P, \theta)$ is a triple as in
Definition~\ref{def:ADS}. In later sections we shall exploit this connection, but for the moment we focus our attention on the triple
$(G, P,\theta)$.

\begin{definition}\label{def: NT}
For an algebraic dynamical system $(G,P,\theta)$ let $\AA[G,P,\theta]$ be the universal
$C^*$-algebra generated by a unitary representation $u$ of $G$ and an
isometric representation $s$ of $P$ satisfying
\[\begin{array}{lrcl}
\textup{(A1)} & s_p u_g &=& u_{\theta_p(g)}s_p \text{ for all }p\in P,
g\in G,
\text{ and }\vspace*{2mm}\\
\textup{(A2)} & s_p^*u_g^{\phantom{*}}s_q^{\phantom{*}} &=&
\begin{cases}
u_{k}^{\phantom{*}}s_{p'}^{\phantom{*}}s_{q'}^*u_{\ell}^* &\text{if $pP \cap qP = rP,\, pp'=qq'=r$}\\
 & \text{and $g=\theta_p(k)\theta_q(\ell^{-1})$ for some $k,\ell\in G$,}\\
0& \text{otherwise.}
\end{cases}
\end{array}\]
\end{definition}

\noindent We need to check that (A2) does not depend on the choice of
$r$ or $(k,\ell)$. If $r' \in P$ satisfies $r'P = rP$, then there exists $x \in P^*$
such that $r' = rx$. If $p'',q''\in P$ satisfy $pp''=qq''=r'$, then $p''=p'x, q''=q'x$,
and hence $s_{p''}s_{q''}^* = s_{p'}s_xs_x^*s_{q'}^* = s_{p'}s_{q'}^*$.

Now suppose $k_1,\ell_1 \in G$ satisfy
$\theta_p(k_1)\theta_q(\ell_1^{-1}) = g = \theta_p(k)\theta_q(\ell^{-1})$ and
$pP
\cap qP
= rP$ for some $r \in P$. But $G$ is a group, so we have
$\theta_p(k^{-1}k_1) = \theta_q(\ell^{-1}\ell_1)$. As $\theta$ respects the
order on $P$, this element belongs to $\theta_r(G)$. Since $\theta_p$
and
$\theta_q$ are injective, we get $k^{-1}k_1 =
\theta_{p'}(k_2)$ and $\ell^{-1}\ell_1 = \theta_{q'}(\ell_2)$ for some
$k_2,\ell_2 \in G$. Note that
$\theta_p(k^{-1}k_1) = \theta_q(\ell^{-1}\ell_1)$ is the same as
$\theta_r(k_2)
= \theta_r(\ell_2)$, and injectivity of $\theta_r$ then implies that
$k_2=\ell_2$. Using (A1) we conclude
\[
u_{k_1}s_{p'}s_{q'}^*u_{\ell_1}^* =
u_{k}u_{k^{-1}k_1}s_{p'}s_{q'}^*u_{\ell^{-1}\ell_1}^*u_{\ell}^*
=u_{k}s_{p'}u_{k_2}u_{\ell_2}^*s_{q'}^*u_{\ell}^*
= u_{k}s_{p'}s_{q'}^*u_{\ell}^*.
\]

\begin{remark}\label{rem: justification for (A2)}
We chose (A2) as one of our defining relations of $\AA[G,P,\theta]$ because the expressions of the form $s_p^*u_gs_q$ are precisely the ones which need to be expressible in terms of $u_{g_1}s_{p_1}s_{p_2}^*u_{g_2}^*, g_i \in G, p_i \in P$ in order to get a \emph{Wick ordering} on $\AA[G,P,\theta]$, i.e. monomials in the generators where all starred generators appear on the right of all non-starred generators, see \cite{JSW}.
\end{remark}

\noindent We like to think of $\AA[G,P,\theta]$ as a Toeplitz type crossed product of $C^*(G)$ by the $P$-action $\theta$. If one looks into the literature for semigroup crossed products, say by abelian semigroups, one encounters a covariance condition that deals with the analogue of $s_p^*u_gs_p$ by means of a transfer operator. This was introduced by Ruy Exel in \cite{Exe1} for $\N$ and was extended to abelian semigroups by the second author in \cite{Lar}. This development gave rise to the notion of Exel-Larsen systems. In fact, for each algebraic dynamical system, one can associate a transfer operator $L$ to the action $\alpha:C^*(G) \to C^*(G)$ induced by $\theta$. Explicitly,  $L$ is the semigroup homomorphism from the opposite semigroup $P^{\text{op}}$ of $P$ to the semigroup of unital, positive, linear maps $C^*(G) \to C^*(G)$ given by $L_p(\delta_g) = \chi_{\theta_p(G)}(g) \delta_{\theta_p^{-1}(g)}$, where $\delta_g$ denote the generating unitaries in $C^*(G)$. In this way, we can regard $\AA[G,P,\theta]$ as a Toeplitz type crossed product for the generalised Exel-Larsen system $(G,P,\alpha,L)$, see Section~\ref{sec: product system for ADS} for details.

The step of moving from abelian to more general semigroups $P$ requires taking care of expressions of the form $s_p^*s_q$ as well. If $P$ happens to be right LCM, Nica covariance as it appeared first in \cite{Nic} provides a natural means to replace $s_p^*s_q$ by $s_{q'}s_{p'}^*$ if $pP \cap qP = rP$ with $r=pp'=qq'$ and declare it to vanish otherwise.

Altogether we see that (A2) combines the idea of a covariance condition coming from a transfer operator for $\theta$ and Nica covariance for the semigroup $P$. Note, however, that (A2) cannot be replaced by these two conditions in general, e.g. they say nothing about terms of the form $s_p^*u_gs_q$ with $g \neq 1$ and $p,q \in P\setminus P^*$ such that $rP \supset pP \cup qP$ implies $r \in P^*$ for all $r \in P$.

For each $g\in G$, $p\in P$ we denote the projection $u_gs_ps_p^*u_g^*$ by $e_{(g,p)}$. In the following lemma we show that relation (A2) is equivalent to a more familiar looking relation involving the product of these
projections $e_{(g,p)}$.

\begin{lemma}\label{lem: A2 and projs}
Let $(G,P,\theta)$ be an algebraic dynamical system, $u$ a unitary
representation of $G$ and $s$ an isometric representation of $P$
satisfying
(A1). Then $u$ and $s$ satisfy (A2) if and only if
\begin{equation}\label{eq: prod of projs}
e_{(g,p)}e_{(h,q)}=
\begin{cases}
e_{(g\theta_p(k),r)} & \text{if $pP\cap qP=rP$ for some $r\in P$
and
}\\
 & \text{$g\theta_p(k)\in h\theta_q(G)$ for
some $k\in G$,}\\
0 & \text{otherwise.}
\end{cases}
\end{equation}
\end{lemma}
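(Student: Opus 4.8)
The plan is to prove both implications by direct manipulation, using only that $u$ is unitary, that $(s_p)_{p\in P}$ is a semigroup of isometries (so $s_p^*s_p=1$ and $s_ps_q=s_{pq}$), and the commutation relation (A1); the real content is matching up the case distinctions in (A2) and in \eqref{eq: prod of projs}. I would first treat the forward direction. Assuming (A2), expand
\[
e_{(g,p)}e_{(h,q)}=u_gs_ps_p^*u_g^*u_hs_qs_q^*u_h^*=u_gs_p\bigl(s_p^*u_{g^{-1}h}s_q\bigr)s_q^*u_h^*
\]
and substitute the value of $s_p^*u_{g^{-1}h}s_q$ supplied by (A2). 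In the non-zero case, where $pP\cap qP=rP$ with $pp'=qq'=r$ and $g^{-1}h=\theta_p(k)\theta_q(\ell^{-1})$, I use (A1) to rewrite $s_pu_k=u_{\theta_p(k)}s_p$ and $u_\ell^*s_q^*=s_q^*u_{\theta_q(\ell)}^*$, together with $s_ps_{p'}=s_r=s_qs_{q'}$, to collapse the middle factor to $s_rs_r^*$; this yields $u_{g\theta_p(k)}s_rs_r^*u_{h\theta_q(\ell)}^*$, and since $g^{-1}h=\theta_p(k)\theta_q(\ell^{-1})$ forces $h\theta_q(\ell)=g\theta_p(k)$, the result is precisely $e_{(g\theta_p(k),r)}$. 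In the zero case $e_{(g,p)}e_{(h,q)}=0$. It then remains only to observe that the hypothesis ``$pP\cap qP\neq\emptyset$ and $g^{-1}h=\theta_p(k)\theta_q(\ell^{-1})$ for some $k,\ell$'' is the same as ``$pP\cap qP=rP$ for some $r$ and $g\theta_p(k)\in h\theta_q(G)$ for some $k$'', which is exactly the case split in \eqref{eq: prod of projs}.

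For the converse, assuming (A1) and \eqref{eq: prod of projs}, I would start from the identity $e_{(1,p)}e_{(g,q)}=s_p\bigl(s_p^*u_gs_q\bigr)s_q^*u_g^*$ and, using $s_p^*s_p=1=s_q^*s_q$, rearrange it into
\[
s_p^*u_gs_q=s_p^*\,e_{(1,p)}e_{(g,q)}\,u_gs_q .
\]
Now substitute \eqref{eq: prod of projs} for $e_{(1,p)}e_{(g,q)}$. When $pP\cap qP=rP$ and $\theta_p(k)\in g\theta_q(G)$, say $\theta_p(k)=g\theta_q(\ell)$ --- equivalently $g=\theta_p(k)\theta_q(\ell^{-1})$ --- the right-hand side becomes $s_p^*u_{\theta_p(k)}s_rs_r^*u_{\theta_p(k)}^*u_gs_q$. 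Writing $r=pp'=qq'$ and invoking (A1) in the forms $s_p^*u_{\theta_p(k)}=u_ks_p^*$ and $u_{\theta_q(\ell)}^*s_q=s_qu_\ell^*$, together with $s_p^*s_r=s_{p'}$, $s_r^*s_q=s_{q'}^*$, and the identity $u_{\theta_p(k)}^*u_g=u_{\theta_q(\ell)}^*$ (immediate from $g=\theta_p(k)\theta_q(\ell^{-1})$), I arrive at $u_ks_{p'}s_{q'}^*u_\ell^*$, which is the first branch of (A2). In the complementary case \eqref{eq: prod of projs} gives $e_{(1,p)}e_{(g,q)}=0$, hence $s_p^*u_gs_q=0$, which is the ``otherwise'' branch of (A2).

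The step I expect to demand the most care is not any of these computations, but the bookkeeping around the two ``otherwise'' clauses and, relatedly, the well-definedness of the right-hand side of \eqref{eq: prod of projs}: one must check that $e_{(g\theta_p(k),r)}$ depends neither on the choice of $r$ (within $rP$) nor on $k$. Independence of $r$ is handled as in the remark following Definition~\ref{def: NT}: another choice differs by a unit $x\in P^*$, which $s_xs_x^*=1$ absorbs. Independence of $k$ is proved exactly as (A2) was shown above to be well defined: if $g\theta_p(k_1),g\theta_p(k_2)\in h\theta_q(G)$, then $\theta_p(k_1^{-1}k_2)=\theta_q(\ell_1^{-1}\ell_2)\in\theta_p(G)\cap\theta_q(G)=\theta_r(G)$ because $\theta$ respects the order, so injectivity gives $k_1^{-1}k_2=\theta_{p'}(k_3)$ for some $k_3\in G$, whence $g\theta_p(k_2)=g\theta_p(k_1)\theta_r(k_3)$ and $e_{(g\theta_p(k_2),r)}=e_{(g\theta_p(k_1),r)}$ since $u_{\theta_r(k_3)}s_r=s_ru_{k_3}$ by (A1). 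With this in hand, the equivalences of the ``otherwise'' clauses in the two directions are routine, each being a restatement of ``$g=\theta_p(k)\theta_q(\ell^{-1})$ for some $k,\ell\in G$'' versus ``$\theta_p(k)\in g\theta_q(G)$ for some $k\in G$'', up to relabelling $g$ as $g^{-1}h$.
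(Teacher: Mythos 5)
Your proposal is correct and follows essentially the same route as the paper: in both directions you expand the relevant expression ($e_{(g,p)}e_{(h,q)}=u_gs_p(s_p^*u_{g^{-1}h}s_q)s_q^*u_h^*$, respectively $s_p^*u_gs_q=s_p^*e_{(1,p)}e_{(g,q)}u_gs_q$), substitute the other relation, and collapse via (A1), matching the case splits exactly as the paper does. The extra well-definedness check for the right-hand side of \eqref{eq: prod of projs} is correct and harmless, mirroring the paper's discussion after Definition~\ref{def: NT}.
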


\begin{proof}
Suppose (A2) holds. Since $e_{(g,p)}e_{(h,q)} =
u_gs_p(s_p^*u_{g^{-1}h}s_q)s_q^*u_h^*$, we know from (A2) that
$e_{(g,p)}e_{(h,q)}$ is
zero unless $pP \cap qP = rP$ and $g^{-1}h=\theta_p(k)\theta_q(\ell^{-1})$
for
some $k,\ell\in G$. If these conditions hold, and $p',q'\in P$ with
$pp'=qq'=r$,
then (A1) and (A2) give
\[
e_{(g,p)}e_{(h,q)} = u_gs_p(s_p^*u_{g^{-1}h}s_q)s_q^*u_h^* =
u_gs_pu_ks_{p'}s_{q'}^*u_{\ell}^*s_q^*u_h^* =
u_{g\theta_p(k)}s_rs_r^*u_{h\theta_q(\ell)}^* = e_{(g\theta_p(k),r)}.
\]
Now \eqref{eq: prod of projs} holds because $g^{-1}h=\theta_p(k)\theta_q(\ell^{-1})$ for
some $k,\ell\in G$ if and only if $g\theta_p(k)\in h\theta_q(G)$ for some $k\in G$.

Suppose \eqref{eq: prod of projs} holds. Let $g \in G$ and $p,q \in P$. Then $s_p^*u_gs_q = s_p^*e_{(1,p)}e_{(g,q)}u_gs_q$ vanishes according to \eqref{eq: prod of projs} unless $pP \cap qP = rP$ for some $r \in P$ and $1\theta_p(k) = g\theta_q(\ell)$ for some $k,\ell \in G$. Note that this is the same condition as in (A2). If we let $p',q' \in P$ satisfy $pp'=qq'=r$, we arrive at
\[s_p^*u_gs_q = s_p^*e_{(1,p)}e_{(g,q)}u_gs_q = s_p^*u_{\theta_p(k)}s_{pp'}s_{qq'}^*u_{g\theta_q(\ell)}^*u_gs_q
= u_ks_{p'}s_{q'}^*u_{\ell}^*,\]
which establishes (A2).
\end{proof}

\begin{notation}\label{not: transversal}
Given $p \in P$, we will refer to a complete set of representatives for $G/\theta_p(G)$
as a \emph{transversal} of $G/\theta_p(G)$ and denote it by $T_p$.
\end{notation}

\begin{remark}\label{rem: mut orthog e's}
Note that \eqref{eq: prod of projs} implies that for each $p\in P$  and each transversal $T_p$ of  $G/\theta_p(G)$, the
projections $\{e_{(g,p)}\mid g\in T_p\}$ form
a collection of mutually orthogonal projections.
\end{remark}

\noindent  We would like to discuss the question of functoriality for the mapping
$(G,P,\theta) \mapsto \AA[G,P,\theta]$. To this end, we introduce a natural notion of a
morphism between algebraic dynamical systems.

\begin{definition}\label{def:morphism of ADS}
Suppose $(G_1,P_1,\theta_1)$ and $(G_2,P_2,\theta_2)$ are algebraic dynamical
systems. A morphism from $(G_1,P_1,\theta_1)$ to $(G_2,P_2,\theta_2)$ is a
pair $(\phi_G,\phi_P)$, where
\begin{enumerate}[(i)]
\item $\phi_G: G_1 \to G_2$ is a group homomorphism,
\item $\phi_P: P_1 \to P_2$ is an identity preserving semigroup homomorphism, and
\item $\phi_G \circ \theta_{1,p} = \theta_{2,\phi_P(p)} \circ \phi_G$
holds for all $p \in P_1$.
\end{enumerate}
\end{definition}

\noindent The proof of the following lemma is straightforward and therefore omitted.

\begin{lemma}\label{lem:morphism of ADS induces sgp hom of GxP}
Suppose $(G_1,P_1,\theta_1)$ and $(G_2,P_2,\theta_2)$ are algebraic dynamical
systems. If $(\phi_G,\phi_P)$ is a morphism from $(G_1,P_1,\theta_1)$ to
$(G_2,P_2,\theta_2)$, then $(g,p) \mapsto (\phi_G(g),\phi_P(p))$ defines
an identity preserving semigroup homomorphism $\phi_G \rtimes \phi_P: G_1 \rtimes_{\theta_1}
P_1 \to G_2 \rtimes_{\theta_2} P_2$.
\end{lemma}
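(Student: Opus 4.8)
The plan is to verify directly that the map $\phi_G\rtimes\phi_P$ preserves the identity and is multiplicative, using the three defining properties of a morphism of algebraic dynamical systems. Recall that the multiplication on the semidirect product $G_i\rtimes_{\theta_i}P_i$ is given by $(g,p)(h,q)=(g\,\theta_{i,p}(h),pq)$, with identity $(1,1)$. Identity preservation is immediate: $\phi_G\rtimes\phi_P$ sends $(1,1)$ to $(\phi_G(1),\phi_P(1))=(1,1)$, since $\phi_G$ is a group homomorphism and $\phi_P$ is identity preserving by (ii).

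For the multiplicative property, take $(g,p),(h,q)\in G_1\rtimes_{\theta_1}P_1$ and compute
\[
(\phi_G\rtimes\phi_P)\big((g,p)(h,q)\big)=(\phi_G\rtimes\phi_P)\big((g\,\theta_{1,p}(h),pq)\big)=\big(\phi_G(g\,\theta_{1,p}(h)),\,\phi_P(pq)\big).
\]
Using that $\phi_G$ is a group homomorphism by (i) and that $\phi_P$ is a semigroup homomorphism by (ii), this equals $\big(\phi_G(g)\,\phi_G(\theta_{1,p}(h)),\,\phi_P(p)\phi_P(q)\big)$. The key step is the intertwining relation (iii), which gives $\phi_G(\theta_{1,p}(h))=\theta_{2,\phi_P(p)}(\phi_G(h))$; substituting this yields
\[
\big(\phi_G(g)\,\theta_{2,\phi_P(p)}(\phi_G(h)),\,\phi_P(p)\phi_P(q)\big)=(\phi_G(g),\phi_P(p))(\phi_G(h),\phi_P(q)),
\]
which is exactly $(\phi_G\rtimes\phi_P)\big((g,p)\big)\,(\phi_G\rtimes\phi_P)\big((h,q)\big)$. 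Hence $\phi_G\rtimes\phi_P$ is an identity preserving semigroup homomorphism.

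I do not anticipate any real obstacle here: the computation is purely formal, and the only place the hypotheses enter in an essential way is the matching of the first coordinates, which is precisely what condition (iii) guarantees; condition (ii) ensures the second coordinates match and that the identity is preserved. (One could also remark in passing that $G_i\rtimes_{\theta_i}P_i$ is indeed a semigroup because $\theta_i$ acts by endomorphisms, but this is standard and was already used in \cite{BLS1}.) This is why the lemma is stated without proof in the text.
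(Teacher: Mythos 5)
Your verification is correct and is exactly the straightforward computation the paper has in mind when it omits the proof: identity preservation follows from (i) and (ii), and multiplicativity comes down to the intertwining relation (iii) matching the first coordinates of $(\phi_G(g\,\theta_{1,p}(h)),\phi_P(pq))$ with the semidirect product formula in $G_2\rtimes_{\theta_2}P_2$. Nothing further is needed.
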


\begin{remark}\label{rem:morphisms of ADS vs. sgp hom of GxP}
By Lemma~\ref{lem:morphism of ADS induces sgp hom of GxP}, every morphism
between algebraic dynamical systems gives rise to a homomorphism between the
corresponding semigroups. The converse is false in general. More precisely,
for given algebraic dynamical systems $(G_1,P_1,\theta_1)$ and
$(G_2,P_2,\theta_2)$, a homomorphism $\varphi: G_1 \rtimes_{\theta_1} P_1 \to
G_2 \rtimes_{\theta_2} P_2$ is induced by a morphism of the algebraic
dynamical systems if and only if $\varphi(G_1 \times \{1\}) \subset G_2 \times
\{1\}$ and $\varphi(\{1\}\times P_1) \subset \{1\}\times P_2$.
\end{remark}

\noindent It is a natural question whether  every morphism $(\phi_G,\phi_P):
(G_1,P_1,\theta_1) \to (G_2,P_2,\theta_2)$ induces a $*$-homomorphism $\varphi_G
\rtimes \varphi_P:\AA[G_1,P_1,\theta_1] \to \AA[G_2,P_2,\theta_2]$. We shall postpone an answer to this question until the end of Section~4.

\section{\texorpdfstring{$C^*$-algebras of right LCM semigroups and functoriality}{C*-algebras of right LCM semigroups and functoriality}}\label{sec: semigroup description}

\noindent In \cite{Li1}, Li investigated the functoriality of the assignment $S\mapsto C^*(S)$ in the context of $ax+b$-semigroups over integral domains, and applied his findings to the Toeplitz type $C^*$-algebras associated to number fields in \cite{CDL}. He remarked that functoriality is not likely to be easily described for arbitrary $S$. Here we show that it can be successfully approached in case that $S$ is a right LCM semigroup with identity.

\medskip
\noindent We first recall Li's construction of the full and the reduced $C^*$-algebra associated to a
discrete left cancellative semigroup $S$. A set $X\subseteq S$ is a right ideal if it is closed under right
multiplication with any element of $S$. For each right ideal $X$, the sets
\[
 pX = \{px \mid x \in X\}\quad \text{and}\quad p^{-1} X = \{y\in S \mid py \in X\}.
\]
are also right ideals. Li \cite[p.4]{Li1} defines $\JJ(S)$ to be the
smallest family of right ideals of $S$ satisfying
\begin{enumerate}[(a)]
 \item $S, \emptyset \in \JJ(S)$; and
 \item $X \in \JJ(S)$ and $p \in S$ implies $pX$ and $p^{-1} X \in \JJ(S)$.
\end{enumerate}
The elements of $\JJ(S)$ are called {\em constructible right ideals}. The
general form of a constructible right ideal is given in \cite[Equation~(5)]{Li1}.
We note that $\JJ(S)$ is also closed under finite intersections, a fact that can be derived from (a) and (b) using $pS \cap qS = p(p^{-1}(qS))$.

\begin{definition}\label{def: independence}
The set of constructible right ideals $\JJ(S)$ is called {\em independent} if
for every $X,X_1,\dots,X_n\in\JJ(S)$ we have
\[
X_j\varsubsetneq X\text{ for all $1\le j\le n$}\Longrightarrow \bigcup_{j=1}^n
X_j\varsubsetneq X.
\]
Alternatively, $\JJ(S)$ is independent if $\bigcup_{j=1}^n X_j=
X\Longrightarrow X_j=X$ for some $j$.
\end{definition}

\noindent Let us recall Li's definition of the full semigroup $C^*$-algebra for $S$.

\begin{definition}\label{def: Li's full algebra}
Let $S$ be a discrete left cancellative semigroup. The {\em full semigroup
$C^*$-algebra} $C^*(S)$ is the universal $C^*$-algebra generated by isometries
$(v_p)_{p\in S}$ and projections $(e_X)_{X\in\JJ(S)}$ satisfying
\begin{enumerate}
\item[(L1)] $v_pv_q=v_{pq}$;
\item[(L2)] $v_pe_Xv_p^*=e_{pX}$;
\item[(L3)] $e_\emptyset=0$ and $e_S=1$; and
\item[(L4)] $e_Xe_Y=e_{X\cap Y}$,
\end{enumerate}
for all $p,q\in S$, $X,Y\in\JJ(S)$. Note that (L2) and (L3) give $v_pv_p^*=e_{pS}$ for all $p\in S$.
\end{definition}

\noindent The reduced semigroup $C^*$-algebra $C_r^*(S)$ is defined by the left regular representation of $S$ on $l^2(S)$: if $\{\varepsilon_p\}_{p\in S}$ is the canonical orthonormal basis of $\ell^2(S)$, then $C_r^*(S)$ is generated by the isometries $V_p$ in $\mathcal{L}(\ell^2(S))$ given by $V_p(\varepsilon_q)=\varepsilon_{pq}$. The universal property of the full semigroup $C^*$-algebra gives a $*$-homomorphism $\lambda:C^*(S)\to C_r^*(S)$ which sends $v_p$ to $V_p$ for each $p\in S$; this is the \emph{left regular representation}.

A particularly nice class of semigroups in terms of the structure $\JJ(S)$ is the class of right LCM semigroups $S$: Recall from \cite[Lemma 3.3]{BLS1} that $\JJ(S) = \{\emptyset\} \cup \{pS \mid p \in S\}$ provided that $S$ is a right LCM semigroup with identity. Further, $\JJ(S)$ is always independent, see \cite[Corollary 3.6]{BLS1}. Moreover, $C^*(S)$ is the closed span of products of the form $v_pv_q^*$ for $p,q\in S$, see \cite[Lemma 3.11]{BLS1}.

\begin{thm}\label{thm:funct for right LCM}
Suppose  $S_1$ and $S_2$ are right LCM semigroups with identity. Let $(\tilde{v}_p)_{p \in S_1}$ and $(v_p)_{p \in S_2}$ be families of generating isometries in $C^*(S_1)$ and $C^*(S_2)$, respectively. An identity preserving semigroup homomorphism $\phi: S_1 \to S_2$ induces a $*$-homomorphism $\varphi: C^*(S_1) \to C^*(S_2)$ such that $\tilde{v}_p \mapsto v_{\phi(p)}$ if and only if $\phi$ satisfies
\begin{equation}\label{eq:funct for right LCM}
\phi(p)S_2 \cap \phi(q)S_2 = \phi(pS_1 \cap qS_1)S_2	\text{ for all } p,q \in S_1.
\end{equation}
\end{thm}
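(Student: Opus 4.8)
The plan is to use the universal property of $C^*(S_1)$ in one direction and a careful inspection of the relations (L1)--(L4) in the other. For the ``if'' direction, suppose $\phi$ satisfies \eqref{eq:funct for right LCM}. I would define a candidate $*$-homomorphism by specifying images of the generators of $C^*(S_1)$: send $\tilde v_p \mapsto v_{\phi(p)}$ and, since $\JJ(S_1) = \{\emptyset\} \cup \{pS_1 \mid p \in S_1\}$ by \cite[Lemma 3.3]{BLS1}, send $e_\emptyset \mapsto 0$ and $e_{pS_1} \mapsto e_{\phi(p)S_2} = v_{\phi(p)}v_{\phi(p)}^*$. The first task is to check this is \emph{well-defined}: if $pS_1 = qS_1$ then $q = px$ for some $x \in S_1^*$, so $\phi(q) = \phi(p)\phi(x)$ with $\phi(x) \in S_2^*$ (as $\phi$ is an identity-preserving homomorphism and units map to units), whence $\phi(q)S_2 = \phi(p)S_2$ and the assignment $e_{pS_1}\mapsto e_{\phi(p)S_2}$ is consistent. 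Then I would verify that the images satisfy (L1)--(L4). Relations (L1) and (L3) are immediate from $\phi$ being an identity-preserving homomorphism. For (L2) one computes $v_{\phi(p)} e_{\phi(q)S_2} v_{\phi(p)}^* = e_{\phi(p)\phi(q)S_2} = e_{\phi(pq)S_2}$, which matches $e_{p(qS_1)}$; one also needs to handle $p^{-1}X$-type ideals, but since $\JJ(S_1)$ consists only of principal right ideals and $\emptyset$, every constructible ideal already has the form $qS_1$, so (L2) reduces to the computation just given. The crucial relation is (L4): $e_{pS_1}e_{qS_1} = e_{pS_1 \cap qS_1}$ must be sent to a valid relation, i.e.\ we need $e_{\phi(p)S_2}e_{\phi(q)S_2} = e_{\phi(pS_1\cap qS_1)S_2}$. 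Here is exactly where \eqref{eq:funct for right LCM} enters: in $C^*(S_2)$ we have $e_{\phi(p)S_2}e_{\phi(q)S_2} = e_{\phi(p)S_2 \cap \phi(q)S_2}$ by (L4) for $S_2$, and by hypothesis $\phi(p)S_2 \cap \phi(q)S_2 = \phi(pS_1\cap qS_1)S_2$; and if $pS_1\cap qS_1 = \emptyset$ the right-hand side is $\emptyset$ (interpreting $\phi(\emptyset)S_2 = \emptyset$), while if $pS_1\cap qS_1 = rS_1$ then $\phi(pS_1\cap qS_1)S_2 = \phi(r)S_2$. So the relation is respected, and the universal property of $C^*(S_1)$ produces the desired $\varphi$.

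For the ``only if'' direction, suppose $\varphi: C^*(S_1)\to C^*(S_2)$ is a $*$-homomorphism with $\tilde v_p \mapsto v_{\phi(p)}$. Applying $\varphi$ to the relation $\tilde v_p\tilde v_p^* = e_{pS_1}$ gives $\varphi(e_{pS_1}) = v_{\phi(p)}v_{\phi(p)}^* = e_{\phi(p)S_2}$. Now fix $p,q\in S_1$. If $pS_1\cap qS_1 = \emptyset$: then $e_{pS_1}e_{qS_1} = e_\emptyset = 0$ in $C^*(S_1)$, so applying $\varphi$ yields $e_{\phi(p)S_2}e_{\phi(q)S_2} = 0$, i.e.\ $e_{\phi(p)S_2\cap\phi(q)S_2} = 0$. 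The subtle point is to conclude from this that $\phi(p)S_2\cap\phi(q)S_2 = \emptyset$; this follows because in $C^*(S_2)$ the projection $e_X$ is nonzero for every nonempty $X\in\JJ(S_2)$ --- this is exactly where independence of $\JJ(S_2)$ (see \cite[Corollary 3.6]{BLS1}) is used, since independence guarantees $e_X \neq 0$ for nonempty constructible $X$ (equivalently, the canonical partial representation is nondegenerate on each such ideal). Hence \eqref{eq:funct for right LCM} holds in this case, with both sides empty. If instead $pS_1 \cap qS_1 = rS_1$: then $e_{pS_1}e_{qS_1} = e_{rS_1}$, so $\varphi$ gives $e_{\phi(p)S_2}e_{\phi(q)S_2} = e_{\phi(r)S_2}$, i.e.\ $e_{\phi(p)S_2\cap\phi(q)S_2} = e_{\phi(r)S_2}$. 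Again using injectivity of $X\mapsto e_X$ on nonempty constructible ideals (a consequence of independence), we get $\phi(p)S_2\cap\phi(q)S_2 = \phi(r)S_2 = \phi(pS_1\cap qS_1)S_2$, which is \eqref{eq:funct for right LCM}.

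The main obstacle --- and the step that needs the most care --- is the injectivity statement ``$e_X = e_Y$ in $C^*(S_2)$ implies $X = Y$ for $X,Y\in\JJ(S_2)$,'' together with its special case ``$e_X = 0$ implies $X = \emptyset$.'' This is what forces the hypothesis \eqref{eq:funct for right LCM} to be necessary (not just sufficient), and it rests on independence of $\JJ(S_2)$ for right LCM semigroups. I would either cite the relevant statement from \cite{Li2012} or \cite{BLS1} directly, or give a short argument: faithfulness of the left regular representation on the diagonal subalgebra generated by the $e_X$ shows that distinct constructible ideals give distinct (and nonzero) projections, because $V_p V_p^*$ is the orthogonal projection of $\ell^2(S_2)$ onto $\ell^2(pS_2)$, so $e_X \mapsto$ (projection onto $\ell^2(X)$) under $\lambda$, and these are distinct for distinct $X$. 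Everything else --- well-definedness, checking (L1)--(L4) --- is routine once one exploits that for right LCM semigroups with identity $\JJ(S)$ is exactly $\{\emptyset\}\cup\{pS \mid p\in S\}$, so there are no exotic constructible ideals to worry about.
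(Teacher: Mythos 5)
Your proof is correct and follows essentially the same route as the paper: for sufficiency you verify (L1)--(L4) for the candidate generators $v_{\phi(p)}$, $e_{\phi(p)S_2}$ (with \eqref{eq:funct for right LCM} entering exactly at (L4)) and invoke the universal property, and for necessity you apply $\varphi$ to the projection relations and separate ideals using the left regular representation, just as the paper does with $e_{r_2S_2}-e_{\phi(r_1)S_2}$. The only small inaccuracy is attributing the injectivity of $X\mapsto e_X$ (and nonvanishing of $e_X$ for $X\neq\emptyset$) to independence of $\JJ(S_2)$: this fact needs no independence, since $\lambda(e_X)$ is the orthogonal projection onto $\ell^2(X)$ and distinct subsets give distinct projections --- which is precisely the direct argument you also supply.
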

\begin{proof} Let $\{\tilde{e}_{pS_1}\}_{p\in S_1}$ and $\{e_{pS_2}\}_{p\in S_2}$ be the standard projections in $C^*(S_1)$ and $C^*(S_2)$, so $\tilde{e}_{pS_1}=\tilde{v}_{p}\tilde{v}_{p}^*$ for $p\in S_1$ and similarly for $S_2$.

We show first that \eqref{eq:funct for right LCM} is a necessary condition. Assume therefore that $\phi$ induces a $*$-homomorphism $\varphi: C^*(S_1) \to C^*(S_2)$. Given $p,q \in S_1$, we observe that $pS_1 \cap qS_1 \neq \emptyset$ forces $\phi(p)S_2 \cap \phi(q)S_2 \supset \phi(pS_1 \cap qS_1)S_2 \neq \emptyset$. Conversely, $\phi(p)S_2 \cap \phi(q)S_2 \neq \emptyset$ together with the equation
\[e_{\phi(p)S_2 \cap \phi(q)S_2} = e_{\phi(p)S_2}e_{\phi(q)S_2}
= \varphi(\tilde{e}_{pS_1})\varphi(\tilde{e}_{qS_1})
= \varphi(\tilde{e}_{pS_1} \tilde{e}_{qS_1})
= \varphi(\tilde{e}_{pS_1 \cap qS_1})\]
implies that $pS_1 \cap qS_1 \neq \emptyset$. Hence we have $\phi(p)S_2 \cap \phi(q)S_2 \neq \emptyset$ if and only if $pS_1 \cap qS_1 \neq \emptyset$. Assuming both intersections to be non-empty, the fact that $S_1$ and $S_2$ are right LCM implies that there exist $r_1 \in S_1$ and $r_2 \in S_2$ such that $\phi(p)S_2 \cap \phi(q)S_2 = r_2S_2$ and $pS_1 \cap qS_2 = r_1S_1$. It follows that $r_2S_2 \supseteq \phi(r_1)S_2$. If  $\phi(r_1)S_2$ was a proper subset of $r_2S_2$, then the image of $e_{r_2S_2}-e_{\phi(r_1)S_2}$ under the left regular representation would be non-zero. Since the assumption on $\phi$ implies that $e_{r_2S_2} = \varphi(\tilde{e}_{r_1S_1}) = e_{\phi(r_1)S_2}$, we would obtain a contradiction. Hence, $r_2 \in \phi(r_1)S_2^*$ and thus \eqref{eq:funct for right LCM} is satisfied.

Conversely, let us assume \eqref{eq:funct for right LCM} holds. We claim that $(v_{\phi(p)})_{p \in S_1}$ and $(e_{\phi(p)S_2})_{p \in S_1}$ satisfy (L1)-(L4). Since $S_1$ and $S_2$ are right LCM, it suffices to prove (L4) because (L1)-(L3) are immediate. Let $p,q \in S_1$. If $pS_1 \cap qS_1 = \emptyset$, which by \eqref{eq:funct for right LCM} is equivalent to  $\phi(p)S_2 \cap \phi(q)S_2 = \emptyset$, then
\[e_{\phi(p)S_2}e_{\phi(q)S_2} = e_{\phi(p)S_2 \cap \phi(q)S_2} = 0,\]
 as required for (L4). If $pS_1 \cap qS_1 \neq \emptyset$, then $pS_1 \cap qS_1 = rS_1$ for some $r \in S_1$. Now \eqref{eq:funct for right LCM} ensures that
\[e_{\phi(p)S_2}e_{\phi(q)S_2} = e_{\phi(p)S_2 \cap \phi(q)S_2} = e_{\phi(pS_1 \cap qS_1)S_2} = e_{\phi(r)S_2}.\]
\end{proof}

\begin{remark}\label{rem:funct for right LCM I}
Note that the set on the right hand side in \eqref{eq:funct for right LCM} is always contained in the set on the left. As we can see from the proof, \eqref{eq:funct for right LCM} is equivalent to the requirement that the map $\phi_\JJ: \JJ(S_1) \to \JJ(S_2)$ given by $\emptyset \mapsto \emptyset$ and $pS_1 \mapsto \phi(p)S_2$ is compatible with finite intersections.
In fact, as $S_1$ and $S_2$ are right LCM semigroups with identity, the image $\JJ(\phi) := \phi_\JJ(\JJ(S_1))$ is the smallest family of right ideals in $S_2$ which satisfies
\begin{enumerate}[(a)]
\item $\emptyset,S_2 \in \JJ(\phi)$; and
\item $X \in \JJ(\phi)$ and $p \in S_1$ implies $\phi(p)X$ and $\phi(p)^{-1}X \in \JJ(\phi)$.
\end{enumerate}
Therefore we interpret \eqref{eq:funct for right LCM} as a condition which characterises when $\JJ(\phi)$ is a \emph{subfamily of constructible right ideals} of $S_2$ corresponding to $\phi(S_1)$.
\end{remark}

\begin{remark}\label{rem:funct for right LCM II}
The condition \eqref{eq:funct for right LCM} is essentially the same as (b)
from \cite[Lemma 2.18]{Li1}. The apparent difference is that the similar
identity in  \cite[Lemma 2.18]{Li1} has images  under $\phi$ of right
ideals in the left-hand side as well as in the right-hand side. Since we deal
with right LCM semigroups, we can phrase the condition only using images of
right ideals in the right-hand side of \eqref{eq:funct for right LCM}. It is
striking that for right LCM semigroups there is no need to require \cite[Lemma
2.18~(a)]{Li1}, and (b) turns out to be the precise condition which is
needed to ensure that $\phi$ induces a $*$-homomorphism on the level of the
full semigroup $C^*$-algebra.
\end{remark}

\begin{prop}\label{prop:inj and surj of the induced *-hom}
Let $S_1$ and $S_2$ be right LCM semigroups with identity and suppose $\phi: S_1 \to S_2$ is a semigroup homomorphism satisfying \eqref{eq:funct for right LCM}. Then the induced $*$-homomorphism $\varphi: C^*(S_1) \to C^*(S_2)$ is surjective if and only if $\phi$ is surjective. If $\varphi$ is injective, then $\phi$ is injective. The converse holds when the left regular representation implements an isomorphism $C^*(S_i) \cong C^*_r(S_i)$ for each $i=1,2$.
\end{prop}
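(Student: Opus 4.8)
The plan is to prove the three implications in turn, exploiting the structure results for right LCM semigroups recalled before the theorem, namely that $C^*(S_i)$ is the closed span of the monomials $v_pv_q^*$ and that $\JJ(S_i) = \{\emptyset\} \cup \{pS_i : p \in S_i\}$, together with the existence of the left regular representations $\lambda_i : C^*(S_i) \to C^*_r(S_i)$.

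\emph{Surjectivity.} If $\phi$ is surjective, then for every $q \in S_2$ there is $p \in S_1$ with $\phi(p) = q$, so $v_q = v_{\phi(p)} = \varphi(\tilde v_p)$ lies in the range of $\varphi$; since the $v_q$ generate $C^*(S_2)$, the map $\varphi$ is surjective. Conversely, suppose $\varphi$ is surjective but $\phi(S_1) \subsetneq S_2$, say $q_0 \notin \phi(S_1)$. I would compose $\varphi$ with the left regular representation $\lambda_2$ to get a surjection $C^*(S_1) \to C^*_r(S_2)$, and then apply a state/expectation argument: $C^*_r(S_2) \subseteq \mathcal{L}(\ell^2(S_2))$ carries the vector state $\omega$ at $\varepsilon_1$, and $\omega(V_pV_q^*) = \langle V_q^*\varepsilon_1, V_p^*\varepsilon_1\rangle$, which is nonzero only when $p = q = 1$ (using left cancellation). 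Thus $\omega \circ \lambda_2 \circ \varphi$ applied to a monomial $\tilde v_p \tilde v_q^* \in C^*(S_1)$ equals $\omega(V_{\phi(p)}V_{\phi(q)}^*)$, which vanishes unless $\phi(p) = \phi(q) = 1$. So the range of $\lambda_2 \circ \varphi$, being the closed span of the $V_{\phi(p)}V_{\phi(q)}^*$, is ``small'' in a way incompatible with containing $V_{q_0}$; more precisely the image lies in $C^*_r(\phi(S_1))$, viewed inside $\mathcal{L}(\ell^2(S_2))$ via the canonical identification, and $V_{q_0}$ does not preserve $\ell^2(\phi(S_1)\cdot 1)$-type subspaces — I would make this rigorous by exhibiting a concrete element of $\mathcal{L}(\ell^2(S_2))$ in the commutant of $\lambda_2(\varphi(C^*(S_1)))$ that does not commute with $V_{q_0}$, e.g. a suitable projection onto a union of right cosets. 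This is the step I expect to require the most care.

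\emph{Injectivity forces $\phi$ injective.} Suppose $\varphi$ is injective and $\phi(p) = \phi(q)$ for $p, q \in S_1$. Then $\varphi(\tilde v_p) = v_{\phi(p)} = v_{\phi(q)} = \varphi(\tilde v_q)$, whence $\tilde v_p = \tilde v_q$ by injectivity of $\varphi$. Applying the left regular representation $\lambda_1$ of $C^*(S_1)$ gives $V_p = V_q$ in $\mathcal{L}(\ell^2(S_1))$, and evaluating on $\varepsilon_1$ yields $\varepsilon_p = \varepsilon_q$, so $p = q$.

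\emph{Converse under faithfulness of the left regular representations.} Assume now $\lambda_i : C^*(S_i) \to C^*_r(S_i)$ is an isomorphism for $i = 1,2$, and $\phi$ is injective; I want $\varphi$ injective. Since $\lambda_1$ is an isomorphism, it suffices to show that $\lambda_2 \circ \varphi \circ \lambda_1^{-1} : C^*_r(S_1) \to C^*_r(S_2)$ is injective. Now $\phi$ injective means $\phi$ identifies $S_1$ with the subsemigroup $\phi(S_1) \subseteq S_2$, and I would use $\phi$ to build an isometry $W : \ell^2(S_1) \to \ell^2(S_2)$, $W\varepsilon_p = \varepsilon_{\phi(p)}$, which is well-defined and isometric precisely because $\phi$ is injective. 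One checks $W^* V_{\phi(p)} W = V_p$ for all $p \in S_1$ (using that $\phi$ is a homomorphism and injective, so $\phi(p)\phi(q) = \phi(pq)$ and the relevant coset structure matches), so conjugation by $W$ intertwines the left regular representation of $S_1$ with the compression of that of $S_2$; hence $a \mapsto W^* \lambda_2(\varphi(\lambda_1^{-1}(a))) W$ is the identity on generators, forcing $\lambda_2 \circ \varphi \circ \lambda_1^{-1}$ to be an injective (indeed isometric) $*$-homomorphism, and therefore $\varphi$ is injective. The one subtlety here is verifying the intertwining relation $W^*V_{\phi(p)}W = V_p$ rather than merely $V_{\phi(p)}W = WV_p$; the latter is immediate and in fact suffices, since $W$ being an isometry then gives $V_p = W^*WV_p = W^*V_{\phi(p)}W$, so no extra hypothesis beyond injectivity of $\phi$ is needed for this direction.
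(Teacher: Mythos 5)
Your first two implications are fine and coincide with the paper's argument: if $\phi$ is onto then the generators $v_q$ all lie in the range of $\varphi$, and if $\varphi$ is injective then $\varphi(\tilde v_p)=\varphi(\tilde v_q)$ forces $\tilde v_p=\tilde v_q$, hence $p=q$ after applying $\lambda_1$ and evaluating at $\varepsilon_1$. Your last part is also essentially the paper's proof: compressing the left regular representation of $S_2$ by the projection onto $\ell^2(\phi(S_1))$ is exactly conjugation by your isometry $W$.

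The genuine gap is the converse direction for surjectivity, which you leave unfinished and whose supporting claims do not hold. The algebra $\lambda_2(\varphi(C^*(S_1)))$ is generated by the operators $V_{\phi(p)}$ acting on all of $\ell^2(S_2)$; it is not contained in a copy of $C^*_r(\phi(S_1))$, because although each $V_{\phi(p)}$ leaves $\ell^2(\phi(S_1))$ invariant, the adjoints do not: $V_{\phi(q)}^*\varepsilon_{\phi(x)}=\varepsilon_y$ whenever $\phi(x)=\phi(q)y$ in $S_2$, and $y$ need not lie in $\phi(S_1)$. (Also $\omega(V_pV_q^*)\neq 0$ exactly when $p=q\in S_2^*$, not only when $p=q=1$.) More seriously, the strategy of producing a projection in the commutant of $\lambda_2(\varphi(C^*(S_1)))$ not commuting with $V_{q_0}$ cannot be carried out from the stated hypotheses alone. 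Consider the identity-preserving inclusion $\phi\colon\N\hookrightarrow\Z$: it satisfies \eqref{eq:funct for right LCM} (both sides equal $\Z$), $C^*(\N)$ is the Toeplitz algebra, $C^*(\Z)\cong C(\T)$, and the induced $\varphi$ sends the generating isometry to the generating unitary, so $\varphi$ is surjective although $\phi$ is not. The obstruction is that elements of $\phi(S_1)$ may become units in $S_2$, so that $v_{\phi(p)}v_{\phi(q)}^*$ can equal $v_{q_0}$ with $q_0\notin\phi(S_1)$. The equality the paper uses to dispose of this direction, $\varphi(C^*(S_1))\cap\{v_q\mid q\in S_2\}=\{v_{\phi(p)}\mid p\in S_1\}$, is precisely what your missing step would have to establish, and the example shows it needs control of how $\phi(S_1)$ meets $S_2^*$ beyond \eqref{eq:funct for right LCM}; so this step cannot be filled in as sketched.

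The same example also exposes the leap at the end of your final paragraph. Compression $T\mapsto W^*TW$ is not multiplicative, so agreeing with the identity on the generators $V_p$ does not give agreement on the spanning monomials $V_pV_q^*$; for that one needs $\phi(x)\in\phi(q)S_2\Rightarrow x\in qS_1$, and for $\N\subset\Z$ one has $W^*V_{\phi(1)}V_{\phi(1)}^*W=1\neq V_1V_1^*$ (indeed there $\varphi$ has the compacts in its kernel even though $\phi$ is injective and both left regular representations are faithful). Your argument here mirrors the paper's compression argument, but as written the inference from \emph{identity on generators} to injectivity of $\lambda_2\circ\varphi\circ\lambda_1^{-1}$ is not justified and requires an additional hypothesis of this coset-pullback type.
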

\begin{proof}
 Since $\phi(S_1)$ is a subsemigroup of $S_2$ and $\varphi(C^*(S_1)) \cap \{v_q \mid q \in S_2\} = \{v_{\phi(p)} \mid p \in S_1\}$, the statement about surjectivity of $\varphi$ is immediate. Assume that $\phi$ is not injective. If $p,q \in S_1, p \neq q$ are such that $\phi(p) = \phi(q)$, then $\tilde{v}_p \neq \tilde{v}_q$ but $\varphi(\tilde{v}_p) = v_{\phi(p)} = v_{\phi(q)} = \varphi(\tilde{v}_q)$, showing that $\varphi$ is not injective either.

 Finally, suppose that $\phi$ is injective and that the left regular representation implements canonical isomorphisms $C^*(S_i) \cong C^*_r(S_i)$ for $i=1,2$.  The left regular representation of $S_2$ on $\ell^2(S_2)$ restricts to a representation of $\phi(S_1)$. Compressing this by the projection onto $\phi(S_1)$ gives rise to a representation on $\ell^2(\phi(S_1)) \subset \ell^2(S_2)$. As $\phi$ is injective, this is simply a representation of $S_1$ which is unitarily equivalent to the left regular representation on $\ell^2(S_1)$. Since $C^*(S_1) \cong C^*_r(S_1)$, we deduce that this compression of $\varphi$  is injective. Hence $\varphi$ itself is injective.
\end{proof}

\noindent We shall examine \eqref{eq:funct for right LCM} more closely for right LCM semigroups built from algebraic dynamical systems in the next section.

\section{\texorpdfstring{A semigroup $C^*$-algebra description of $\AA[G,P,\theta]$}{A semigroup C*-algebra description algebraic dynamical systems}}\label{sec: semigroup description for ADS}
\noindent In this section we show that, for an algebraic dynamical system $(G,P,\theta)$, the $C^*$-algebra $\AA[G,P,\theta]$ is isomorphic to the full semigroup $C^*$-algebra of the semidirect product $\gxp$, which is a right LCM semigroup by \cite[Proposition 8.2]{BLS1}. As an application of this identification we study functoriality of the assignment $(G,P,\theta)\mapsto \AA[G,P,\theta]$ based on the findings of Section~\ref{sec: semigroup description}.
Throughout  this section we let $(G,P,\theta)$ be an algebraic dynamical system.
We start with a description of the structure of the constructible right ideals $\JJ(\gxp)$ of $\gxp$.%

\begin{notation}\label{notation: ideals}
We denote the principal right ideals $(g,p)\bigl(\gxp\bigr)$ by $X_{(g,p)}$.
\end{notation}

\begin{prop}\label{prop: structure of J}
Let $X_{(g,p)}$ and $X_{(h,q)}$ be principal right ideals of $\gxp$, for $g,h\in G$ and $p,q\in P$. Then
\[
X_{(g,p)} \cap X_{(h,q)} =
\begin{cases}
X_{(g\theta_p(k),r)} &\text{if $pP\cap qP=rP$ for some $r\in P$
and
}\\
 & \text{$g\theta_p(k)\in h\theta_q(G)$ for
some $k\in G$,}\\
\emptyset & \text{otherwise.}
\end{cases}
\]
Hence the family of constructible right ideals $\JJ(\gxp)$ is independent. If $(T_p)_{p \in P}$ is a family of transversals, then $\JJ(\gxp) = \{\emptyset\} \cup \{X_{(g,p)} \mid p\in P, g\in T_p\}$.
\end{prop}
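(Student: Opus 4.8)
The plan is to establish the three assertions of Proposition~\ref{prop: structure of J} in order: first the intersection formula, then independence of $\JJ(\gxp)$, and finally the explicit description of $\JJ(\gxp)$ in terms of transversals. The key input is the multiplication in $\gxp$, namely $(g,p)(h,q) = (g\theta_p(h), pq)$, together with the fact that $\theta$ respects the order on $P$ (relation~\eqref{eq:preserving the order}).

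For the intersection formula, I would compute directly. An element lies in $X_{(g,p)}$ iff it has the form $(g\theta_p(a), pb)$ for some $a \in G$, $b \in P$; similarly for $X_{(h,q)}$. So $X_{(g,p)} \cap X_{(h,q)}$ consists of pairs $(x, y)$ with $y \in pP \cap qP$ and $x \in g\theta_p(G) \cap h\theta_q(G)$ subject to the compatibility that the first coordinate be reachable using the same semigroup element in the second coordinate. If $pP \cap qP = \emptyset$ the intersection is clearly empty. If $pP \cap qP = rP$ with $r = pp' = qq'$, then I would argue that the intersection is nonempty iff the cosets $g\theta_p(G)$ and $h\theta_q(G)$ have a point in common that is ``visible at level $r$'', i.e. iff there is $k \in G$ with $g\theta_p(k) \in h\theta_q(G)$; here is where \eqref{eq:preserving the order} enters, since $\theta_p(G) \cap \theta_q(G) = \theta_r(G)$ forces the overlap of the two cosets (when nonempty) to be a single coset of $\theta_r(G)$, namely $g\theta_p(k)\theta_r(G)$. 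One then checks $X_{(g,p)} \cap X_{(h,q)} = X_{(g\theta_p(k), r)}$ by a double inclusion: ``$\supseteq$'' is immediate since $(g\theta_p(k), r) = (g, p)(k, p')= (h\theta_q(\ell), q)(\ldots)$ for the appropriate $\ell$, and ``$\subseteq$'' uses that any common element sits over $rP$ in the second coordinate and in $g\theta_p(k)\theta_r(G)$ in the first. The well-definedness (independence of the choice of $r$ and of $k$) is exactly the verification already carried out after Definition~\ref{def: NT}, so I would simply refer to it, or note it parallels Lemma~\ref{lem: A2 and projs}. The main obstacle here is bookkeeping: making sure the first-coordinate and second-coordinate conditions are handled simultaneously and that one does not lose the constraint linking $a \in G$ to $b \in P$; I expect this to be the most delicate part, though it is more a matter of care than of a genuine idea.

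For independence of $\JJ(\gxp)$, I would first recall from \cite[Lemma 3.3]{BLS1} that since $\gxp$ is right LCM with identity, $\JJ(\gxp) = \{\emptyset\} \cup \{X_{(g,p)} \mid (g,p) \in \gxp\}$, so it suffices to show no principal right ideal is a finite union of proper principal right ideals. Given $X_{(g,p)} = \bigcup_{j=1}^n X_{(h_j, q_j)}$ with each $X_{(h_j,q_j)} \subsetneq X_{(g,p)}$, I would project to the second coordinate: the image of $X_{(g,p)}$ is $pP$ and the images of the $X_{(h_j,q_j)}$ are $q_jP$, so $pP = \bigcup q_jP$; by independence of $\JJ(P)$ (\cite[Corollary 3.6]{BLS1}, valid since $P$ is right LCM) we get $q_jP = pP$ for some $j$, hence $q_j = px_j$ with $x_j \in P^*$. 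Replacing that ideal, WLOG $q_j = p$. Then $X_{(h_j,p)} \subseteq X_{(g,p)}$ forces $h_j \in g\theta_p(G)$, and in fact $X_{(h_j,p)} = X_{(g,p)}$ whenever $h_j \in g\theta_p(G)$ (since $(h_j, p) = (g,p)(\theta_p^{-1}(g^{-1}h_j), 1)$ and this is invertible from both sides). This contradicts properness. Alternatively, and perhaps more cleanly, independence follows from \cite[Proposition 8.2]{BLS1} plus \cite[Corollary 3.6]{BLS1} applied directly to the right LCM semigroup $\gxp$; I would use whichever reference is cleanest but include the coordinate-projection argument as it is short and self-contained.

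For the final description, fix a family of transversals $(T_p)_{p \in P}$. We already know $\JJ(\gxp) = \{\emptyset\} \cup \{X_{(g,p)} \mid g \in G, p \in P\}$, so it only remains to see that every $X_{(g,p)}$ equals $X_{(g', p)}$ for the unique $g' \in T_p$ with $g' \in g\theta_p(G)$ (equivalently $g \in g'\theta_p(G)$), and that distinct elements of $T_p$, or elements attached to genuinely different $p$ up to $P^*$, give distinct ideals. The first point is the observation just used: $g \in g'\theta_p(G)$ implies $(g,p)$ and $(g',p)$ differ by a two-sided unit of $\gxp$, whence $X_{(g,p)} = X_{(g',p)}$. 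For the second point (which is not strictly needed for the set equality but is the natural accompanying statement), $X_{(g,p)} = X_{(h,q)}$ would force, by projecting to $P$, $pP = qP$, and then comparing first coordinates $g\theta_p(G) = h\theta_q(G) = h\theta_p(G)$, so $g$ and $h$ lie in the same coset; if both are in the chosen transversal they coincide. This part is routine once the structural facts are in place, so I would keep it brief. I do not anticipate a serious obstacle beyond the bookkeeping in the intersection formula already flagged.
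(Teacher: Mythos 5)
Your argument is correct, but it is genuinely different in character from the paper's proof, which consists entirely of citations: the intersection formula is quoted from the proof of \cite[Proposition 8.2]{BLS1} (where $\gxp$ is shown to be right LCM), and both independence and the transversal description are obtained by applying \cite[Lemma 3.3 and Corollary 3.6]{BLS1} directly to the right LCM semigroup $\gxp$. You instead reconstruct these inputs: a direct computation of $X_{(g,p)} \cap X_{(h,q)}$, a coordinate-projection argument for independence (invoking Corollary 3.6 of \cite{BLS1} only for $P$ rather than for $\gxp$), and the observation that $(g,p)$ and $(g',p)$ differ by a unit of $\gxp$ when $g' \in g\theta_p(G)$. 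All of this goes through; one simplification is available in the step you flag as most delicate. Since $(g,p)(a,b)=(g\theta_p(a),pb)$ with $a \in G$ and $b \in P$ ranging independently, $X_{(g,p)}$ is precisely the product set $g\theta_p(G)\times pP$, so there is no ``compatibility'' constraint linking the two coordinates at all; consequently $X_{(g,p)}\cap X_{(h,q)}=\bigl(g\theta_p(G)\cap h\theta_q(G)\bigr)\times\bigl(pP\cap qP\bigr)$, and the stated formula drops out immediately from \eqref{eq:preserving the order} (a nonempty intersection of cosets $g\theta_p(G)\cap h\theta_q(G)$ equals the single coset $g\theta_p(k)\theta_r(G)$) together with $\theta_{rx}(G)=\theta_r(G)$ for $x\in P^*$, which also settles well-definedness. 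What your route buys is a self-contained verification within this paper, and a proof of independence that only needs the corresponding fact for $P$; what the paper's route buys is brevity, since all three claims were already established in \cite{BLS1}.
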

\begin{proof}
The  formula for the intersection of principal ideals can be found in the proof of
\cite[Proposition 8.2]{BLS1}. The remaining claims follow from \cite[Lemma 3.3
and Corollary 3.6]{BLS1}.
\end{proof}

\begin{cor}\label{cor: spn for C*}
$C^*(\gxp)$ is the closed linear span of $\{v_{(g,p)}v_{(h,q)}^* \mid g,h\in G,\, p,q\in P\}$.
\end{cor}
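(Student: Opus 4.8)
The plan is to deduce Corollary~\ref{cor: spn for C*} from the general structural fact recorded just before Theorem~\ref{thm:funct for right LCM}, namely that for a right LCM semigroup $S$ with identity the $C^*$-algebra $C^*(S)$ is the closed linear span of products $v_pv_q^*$ with $p,q \in S$ (this is \cite[Lemma 3.11]{BLS1}). Since $\gxp$ is a right LCM semigroup with identity by \cite[Proposition 8.2]{BLS1}, the result applies directly with $S = \gxp$: the elements of $S$ are precisely the pairs $(g,p)$ with $g \in G$, $p \in P$, and the generating isometries are the $v_{(g,p)}$. Thus the statement is essentially an instantiation of the general lemma, and the only thing to spell out is why the hypotheses of that lemma are met.

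Concretely, I would proceed as follows. First, recall from Definition~\ref{def:ADS} that $G$ is a countable discrete group and $P$ a countable right LCM semigroup with identity, so that $\gxp$ is countable, discrete, and left cancellative. Second, invoke \cite[Proposition 8.2]{BLS1} (as already cited in Section~\ref{sec: ADS and their C*-algebras} and reiterated at the start of this section) to conclude that $\gxp$ is a right LCM semigroup with identity $(1_G,1_P)$. Third, apply \cite[Lemma 3.11]{BLS1} to $S = \gxp$ to obtain that $C^*(\gxp) = \clsp\{v_{(g,p)}v_{(h,q)}^* \mid (g,p),(h,q) \in \gxp\}$, which is exactly the asserted closed linear span since the elements of $\gxp$ are indexed by $g,h \in G$ and $p,q \in P$.

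As an alternative (or a sanity check) one could argue more self-containedly from Proposition~\ref{prop: structure of J}: the latter shows $\JJ(\gxp)$ consists of $\emptyset$ together with the principal right ideals $X_{(g,p)}$, and each such $X_{(g,p)}$ equals $(g,p)(\gxp)$, so $e_{X_{(g,p)}} = v_{(g,p)}v_{(g,p)}^*$. Hence every generator of $C^*(\gxp)$ from Definition~\ref{def: Li's full algebra}, be it an isometry $v_{(g,p)}$ or a projection $e_X$, already lies in the set $\{v_{(g,p)}v_{(h,q)}^*\}$ (taking $h = g$, $q = p$ for projections, and $h = 1_G$, $q = 1_P$ for the isometries, using $v_{(1_G,1_P)} = 1$). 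It then remains to check that the linear span of such products is closed under multiplication and under taking adjoints; closure under adjoints is immediate, and closure under multiplication reduces to rewriting a product $v_{(h,q)}^* v_{(g',p')}$ of the "wrong" order in Wick form, which is handled by the intersection formula of Proposition~\ref{prop: structure of J} exactly as in the proof of \cite[Lemma 3.11]{BLS1}.

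I do not anticipate a genuine obstacle here; the corollary is a routine consequence of results already established in \cite{BLS1} and recalled in the excerpt. The only point requiring a modicum of care is the rewriting step $v_q^* v_{p'} \in \clsp\{v_a v_b^*\}$ when one wants the self-contained argument, and this is precisely the content of the intersection formula in Proposition~\ref{prop: structure of J}: $v_q^* v_{p'} = v_q^* e_{qS \cap p'S} v_{p'}$ equals $v_{c} v_{d}^*$ (with $qc = p'd$ generating the intersection) when the intersection is nonempty and $0$ otherwise. Given that Proposition~\ref{prop: structure of J} is already in hand, even this step is immediate, so the cleanest exposition is simply to cite \cite[Lemma 3.11]{BLS1} applied to $\gxp$.
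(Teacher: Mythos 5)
Your main argument is exactly the paper's proof: since $\gxp$ is a right LCM semigroup with identity by \cite[Proposition 8.2]{BLS1}, the corollary follows immediately from \cite[Lemma 3.11]{BLS1}. The additional self-contained sketch via Proposition~\ref{prop: structure of J} is a fine sanity check but not needed; your proposal is correct and matches the paper's approach.
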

\begin{proof}
Since $\gxp$ is right LCM, this follows from \cite[Lemma~3.11]{BLS1}.
\end{proof}

\noindent We now state the main result of this section.

\begin{thm}\label{thm: A is a semigroup algebra}
There is an isomorphism
$\varphi:\AA[G,P,\theta]\to C^*(\gxp)$ satisfying
\[
\varphi(u_g)=v_{(g,1)}\quad\text{and}\quad \varphi(s_p)=v_{(1,p)},
\]
for all $g\in G$, $p\in P$.
\end{thm}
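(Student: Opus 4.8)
The plan is to construct the isomorphism $\varphi$ in both directions using universal properties. For the forward map $\varphi\colon\AA[G,P,\theta]\to C^*(\gxp)$, I would first check that the elements $v_{(g,1)}$ and $v_{(1,p)}$ of $C^*(\gxp)$ satisfy the defining relations of $\AA[G,P,\theta]$: that $g\mapsto v_{(g,1)}$ is a unitary representation of $G$ (immediate from (L1) and the fact that $(g,1)$ is a unit in $\gxp$ with inverse $(g^{-1},1)$, so each $v_{(g,1)}$ is unitary), that $p\mapsto v_{(1,p)}$ is an isometric representation of $P$ (again (L1), with $v_{(1,p)}v_{(1,p)}^* = e_{X_{(1,p)}}$), and then relations (A1) and (A2). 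Relation (A1) amounts to the identity $(1,p)(g,1) = (\theta_p(g),1)(1,p)$ in $\gxp$, which is exactly how the semidirect product multiplication is defined. Relation (A2) is the more involved one: here I would compute $v_{(1,p)}^* v_{(g,1)} v_{(1,q)}$ and compare with Proposition~\ref{prop: structure of J}, using that $v_{(g,1)}v_{(1,q)} = v_{(g,q)}$ and $v_{(1,p)}^* = v_{(1,p)}^*$, so that $v_{(1,p)}^*v_{(g,q)}$ is governed by the structure of $X_{(1,p)}\cap X_{(g,q)}$ — this is precisely the computation underlying Li's relations for products $v_a^*v_b$ in a right LCM semigroup, which unwinds to the case distinction in (A2). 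Thus the universal property of $\AA[G,P,\theta]$ yields $\varphi$ with the stated values on generators.

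For the inverse direction I would build a $*$-homomorphism $\psi\colon C^*(\gxp)\to\AA[G,P,\theta]$ by setting $\psi(v_{(g,p)}) = u_g s_p$ and $\psi(e_X)$ appropriately on constructible right ideals. By Proposition~\ref{prop: structure of J}, every nonempty element of $\JJ(\gxp)$ has the form $X_{(g,p)}$, so it suffices to define $\psi(e_{X_{(g,p)}}) = e_{(g,p)} = u_g s_p s_p^* u_g^*$ (and $\psi(e_\emptyset)=0$), but one must check this is well-defined, i.e. independent of the chosen representative $(g,p)$ of the right ideal $X_{(g,p)}$ — two pairs $(g,p),(g',p')$ generate the same right ideal precisely when $(g',p') = (g,p)(x, y)$ for $(x,y)\in(\gxp)^*$, and by the remark following Definition~\ref{def:ADS} together with the unitarity of $u_x$ and $s_y$ one gets $e_{(g',p')} = e_{(g,p)}$. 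Then I would verify (L1)–(L4) for the family $(u_gs_p)$ and $(\psi(e_X))$: (L1) is the statement $u_g s_p u_h s_q = u_{g\theta_p(h)} s_{pq}$, which follows from (A1); (L2) follows by a direct computation; (L3) is trivial; and (L4), the product-of-projections relation $\psi(e_X)\psi(e_Y) = \psi(e_{X\cap Y})$, is exactly Lemma~\ref{lem: A2 and projs} combined with the intersection formula of Proposition~\ref{prop: structure of J}. This gives $\psi$ via the universal property of $C^*(\gxp)$.

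Finally I would check that $\varphi$ and $\psi$ are mutually inverse, which reduces to checking the compositions on generators: $\psi(\varphi(u_g)) = \psi(v_{(g,1)}) = u_g s_1 = u_g$ and $\psi(\varphi(s_p)) = \psi(v_{(1,p)}) = u_1 s_p = s_p$, so $\psi\circ\varphi = \id$ on generators of $\AA[G,P,\theta]$; and $\varphi(\psi(v_{(g,p)})) = \varphi(u_g s_p) = v_{(g,1)}v_{(1,p)} = v_{(g,p)}$, while $\varphi(\psi(e_{X_{(g,p)}})) = \varphi(e_{(g,p)}) = v_{(g,1)}v_{(1,p)}v_{(1,p)}^*v_{(g,1)}^* = v_{(g,p)}v_{(g,p)}^* = e_{X_{(g,p)}}$, so $\varphi\circ\psi = \id$ on generators of $C^*(\gxp)$. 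Since each algebra is generated by the relevant elements, both compositions are the identity.

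The main obstacle I anticipate is the verification of relation (A2) for the images $v_{(g,1)}, v_{(1,p)}$ in $C^*(\gxp)$ — unwinding $v_{(1,p)}^* v_{(g,1)} v_{(1,q)}$ requires careful bookkeeping with Li's relations (L1)–(L4) applied to the semidirect product, together with the intersection formula from Proposition~\ref{prop: structure of J} and keeping track of the ambiguity in the choice of $r$ with $pP\cap qP = rP$ and of the complement elements $p',q'$. The well-definedness checks (independence of representatives for $e_{X_{(g,p)}}$, and the verification already carried out in the text that (A2) does not depend on $r$ or $(k,\ell)$) are the mechanism that makes this go through cleanly, so I would lean on those. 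Everything else — the group/semigroup representation properties and (L1)–(L3) — should be routine consequences of the definition of the semidirect product.
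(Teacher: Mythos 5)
Your proposal is correct and takes essentially the same approach as the paper: both maps are obtained from the two universal properties, with Proposition~\ref{prop: structure of J} supplying the intersection formula, and the isomorphism is confirmed by checking the compositions on generators. The only cosmetic differences are that the paper verifies (A2) via its equivalent projection formulation \eqref{eq: prod of projs} from Lemma~\ref{lem: A2 and projs} rather than by directly unwinding $v_{(1,p)}^*v_{(g,1)}v_{(1,q)}$ with (L1)--(L4), and that you make explicit the well-definedness of $X_{(g,p)}\mapsto e_{(g,p)}$ which the paper treats as routine.
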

\begin{proof}
Straightforward calculations show that $g\mapsto v_{(g,1)}$ is a unitary
representation of $G$ and
$p\mapsto v_{(1,p)}$ is an isometric representation of $P$. We show that these
representations satisfy (A1) and
(A2). Recall that $e_{X_{(g,p)}} = v_{(g,p)}v_{(g,p)}^*$. Fix $g,h \in G$ and
$p,q \in P$. Then
\[
v_{(1,p)}v_{(g,1)} \stackrel{\text{(L1)}}{=} v_{(1,p)(g,1)} =
v_{(\theta_p(g),p)} \stackrel{\text{(L1)}}{=} v_{(\theta_p(g),1)}v_{(1,p)},
\]
so (A1) holds. To get (A2), we just check that \eqref{eq: prod of projs}
holds. We have
\[
v_{(g,1)}v_{(1,p)}v_{(1,p)}^*v_{(g,1)}^* = v_{(g,p)}v_{(g,p)}^* =
e_{X_{(g,p)}}.
\]
Proposition~\ref{prop: structure of J} gives that
\[
e_{X_{(g,p)}}e_{X_{(h,q)}}=
\begin{cases}
e_{X_{(g\theta_p(k),r)}} &\text{if } pP \cap qP = rP
\text{ and } g\theta_p(k) \in
h\theta_q(G) \text{ for some $k\in G$},\\
\emptyset & \text{otherwise,}
\end{cases}
\]
which is \eqref{eq: prod of projs}. The universal property of
$\AA[G,P,\theta]$  gives therefore a homomorphism $\varphi:\AA[G,P,\theta]\to
C^*(\gxp)$ satisfying $\varphi(u_g)=v_{(g,1)}$ and $\varphi(s_p)=v_{(1,p)}$, for all
$g\in G$, $p\in P$.
To prove the result we use the universal property of $C^*(\gxp)$ to find an inverse $\varphi'$ for $\varphi$.

Consider the elements $u_gs_p, e_{(g,p)}\in \AA[G,P,\theta]$. Each $u_gs_p$
is an isometry with range projection equal to
$e_{(g,p)}$. The elements $\{u_gs_p,e_{(h,q)} \mid g,h\in G,\, p,q\in P\}\cup\{0\}$
are easily seen to satisfy (L1)--(L4): condition (L1) follows from an
application of (A1), (L2) and (L3) are immediate, and (L4) follows from
\eqref{eq: prod of projs} and the formula for intersection of ideals from
Proposition~\ref{prop: structure of J}. The universal property of $C^*(\gxp)$
now gives a homomorphism $\varphi':C^*(\gxp)\to\AA[G,P,\theta]$ which satisfies
$v_{(g,1)}\mapsto u_g$ and $v_{(1,p)}\mapsto s_p$, for all $g\in G$, $p\in P$.
Hence $\varphi$ and $\varphi'$ are mutually inverse, so $\varphi$ is an isomorphism.
\end{proof}

\noindent Let us return to the question of functoriality for $(G,P,\theta) \mapsto \AA[G,P,\theta]$ raised at the end of Section~2. Now that we have realised $\AA[G,P,\theta]$ as the semigroup $C^*$-algebra of the right LCM semigroup $\gxp$, we can appeal to \eqref{eq:funct for right LCM} which gives us a precise condition under which a morphism $(\phi_G,\phi_P): (G_1,P_1,\theta_1) \to (G_2,P_2,\theta_2)$ of algebraic dynamical systems induces a $*$-homomorphism $\varphi_G \rtimes \varphi_P: \AA[G_1,P_1,\theta_1] \to \AA[G_2,P_2,\theta_2]$.

\begin{definition}\label{def:adm morphism of ADS}
A morphism $(\phi_G,\phi_P): (G_1,P_1,\theta_1) \to (G_2,P_2,\theta_2)$
of algebraic dynamical systems is called \emph{admissible}, if, in addition to (i)--(iii) of Definition~\ref{def:morphism of ADS},
the following two conditions hold for all $p,q \in P_1$:
\begin{enumerate}
\item[(iv)] $\phi_P(p)P_2 \cap \phi_P(q)P_2 = \phi_P(pP_1 \cap qP_1)P_2$.
\item[(v)] $\phi_G^{-1}\bigl(\phi_G(G_1) \cap \theta_{2,\phi_P(p)}(G_2)\theta_{2,\phi_P(q)}(G_2)\bigr) = \theta_{1,p}(G_1)\theta_{1,q}(G_1)$ if $pP_1 \cap qP_1 \neq \emptyset$.
\end{enumerate}
\end{definition}

\begin{remark}\label{rem:adm morphism}
Let us comment briefly on these extra requirements:
\begin{enumerate}[(a)]
\item For both (iv) and (v), the inclusion $\supset$ holds for every morphism
$(\phi_G,\phi_P)$, so the statement is about the reverse inclusion.
\item Condition (iv) is nothing but \eqref{eq:funct for right LCM} for $\phi_P: P_1 \to P_2$. As long as we do not have more knowledge on $P_1$ and $P_2$, we cannot expect to get anything better than this, compare Theorem~\ref{thm:funct for right LCM}. But the existence of a semigroup homomorphism $\phi_P: P_1 \to P_2$ satisfying (iv) has structural consequences: If $P_2$ is such that the  family of principal right ideals ordered by reverse inclusion is directed, then the same is true of $P_1$. In particular, this is the case if $P_2$ is a group.
\item Condition (v) holds if $\phi_G$ is injective or the
action $\theta_1$ is by automorphisms of $G_1$.
\item Note that (v) forces $\theta_{1,p} \in \Aut(G_1)$ whenever
$\theta_{2,\phi_P(p)} \in \Aut(G_2)$. Hence a morphism
$(\phi_G,\phi_P): (G_1,P_1,\theta_1) \to (G_2,P_2,\theta_2)$ with
$P_1^* \subsetneqq \phi_P^{-1}(P_2^*)$ is not admissible.
\end{enumerate}
\end{remark}

\noindent Before we proceed to the general considerations on functoriality we look at admissible morphisms in two particular situations.

\begin{prop}\label{prop:adm for morphism - P_1 group}
Let $P_1$ be a group. Then every morphism $(\phi_G,\phi_P): (G_1,P_1,\theta_1) \to (G_2,P_2,\theta_2)$ of algebraic dynamical systems is admissible.
\end{prop}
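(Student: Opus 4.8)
The plan is to verify conditions (iv) and (v) of Definition~\ref{def:adm morphism of ADS} directly, using that $P_1$ is a group. First I would dispense with (iv). Since $P_1$ is a group, for any $p,q \in P_1$ one has $pP_1 = qP_1 = P_1$, so $pP_1 \cap qP_1 = P_1 = 1\cdot P_1$. On the other side, $\phi_P(p), \phi_P(q) \in P_2$ and $\phi_P(p)P_2 \cap \phi_P(q)P_2$ contains $\phi_P(p)\phi_P(p)^{-1}$... but wait, $\phi_P(p)$ need not be invertible in $P_2$. The correct observation is that $\phi_P$ is identity preserving and $P_1$ is a group, so $\phi_P(p) \in P_2^*$ for every $p \in P_1$ (its inverse is $\phi_P(p^{-1})$). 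Hence $\phi_P(p)P_2 = P_2 = \phi_P(q)P_2$, so $\phi_P(p)P_2 \cap \phi_P(q)P_2 = P_2$, while $\phi_P(pP_1 \cap qP_1)P_2 = \phi_P(P_1)P_2 = P_2$ as well (it contains $\phi_P(1)P_2 = P_2$). Thus (iv) holds.

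Next I would verify (v). Again using $P_1$ a group, $\theta_{1,p}$ is a group endomorphism of $G_1$ that is invertible, with inverse $\theta_{1,p^{-1}}$; hence $\theta_{1,p} \in \Aut(G_1)$ for every $p \in P_1$, and likewise $\theta_{1,q} \in \Aut(G_1)$. Therefore $\theta_{1,p}(G_1) = \theta_{1,q}(G_1) = G_1$, so the right-hand side of (v) is $\theta_{1,p}(G_1)\theta_{1,q}(G_1) = G_1$. For the left-hand side, $\phi_G^{-1}(\text{anything}) \subseteq G_1$ trivially, and the reverse inclusion $\supset$ always holds for a morphism by Remark~\ref{rem:adm morphism}(a); since the right-hand side is all of $G_1$, both sides equal $G_1$ and (v) holds. (Alternatively, this is immediate from Remark~\ref{rem:adm morphism}(c), since $\theta_1$ is by automorphisms whenever $P_1$ is a group.) Note the hypothesis "$pP_1 \cap qP_1 \neq \emptyset$" in (v) is automatic here.

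The main (and only mild) obstacle is the bookkeeping observation that the hypotheses force $\phi_P(P_1) \subseteq P_2^*$ and $\theta_{1,p} \in \Aut(G_1)$; once these are in hand, conditions (iv) and (v) collapse to trivial identities because all the relevant ideals equal the whole semigroup and all the relevant subgroups equal the whole group. There is essentially no hard estimate or construction involved — the content is entirely in recognizing that a group $P_1$ acting by injective endomorphisms must act by automorphisms, and that an identity-preserving homomorphism out of a group lands in the units. I would present the argument in the two short paragraphs above, perhaps citing Remark~\ref{rem:adm morphism}(c) to shorten the treatment of (v).
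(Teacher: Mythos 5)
Your proof is correct and follows essentially the same route as the paper: observe that $\phi_P(P_1)\subseteq P_2^*$ makes all the ideals in (iv) equal to $P_2$, and that $\theta_{1,p}\in\Aut(G_1)$ makes both sides of (v) equal to $G_1$ (the paper computes the left-hand side directly using that $\theta_{2,\phi_P(p)}\in\Aut(G_2)$, whereas you use the trivial containment in $G_1$ together with the automatic inclusion from Remark~\ref{rem:adm morphism}(a), which is an equally valid and equally short variant).
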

\begin{proof}
If $P_1$ is a group, then $P_1^*=P_1$ and $\phi_P(P_1) \subseteq P_2^*$. Hence
$\phi_P(p)P_2 \cap \phi_P(q)P_2 = P_2 = \phi_P(pP_1 \cap qP_1)P_2$ for all $p,q \in P_1$,
which establishes (iv). Moreover, $\theta_1$ and $\theta_{2,\phi_P(\cdot)}$ are $P_1$-actions by group automorphisms of $G_1$ and $G_2$, respectively, so
\[\phi_G^{-1}\bigl(\phi_G(G_1) \cap \theta_{2,\phi_P(p)}(G_2)\theta_{2,\phi_P(q)}(G_2)\bigr) = G_1 = \theta_{1,p}(G_1)\theta_{1,q}(G_1)\]
for all $p,q \in P_1$. This shows (v).
\end{proof}

\begin{prop}\label{prop:adm for morphism - P_1 free cone}
Let $P_1 = \mathbb{F}_n^+$ for some $1 \leq n \leq \infty$. A morphism of algebraic dynamical systems $(\phi_G,\phi_P): (G_1,P_1,\theta_1) \to (G_2,P_2,\theta_2)$ is admissible if and only if it satisfies $\phi_G^{-1}\bigl(\phi_G(G_1) \cap \theta_{2,\phi_P(p)}(G_2)\bigr) = \theta_{1,p}(G_1)$ for all $p \in P_1$.
\end{prop}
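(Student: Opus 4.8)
The plan is to show that, when $P_1 = \F_n^+$ is a free semigroup, conditions (iv) and (v) of Definition~\ref{def:adm morphism of ADS} simplify dramatically: (iv) becomes vacuous, and (v) reduces to its ``one-generator'' version. First I would deal with (iv). In $\F_n^+$ two principal right ideals $pP_1$ and $qP_1$ intersect only when one of $p,q$ is a prefix of the other --- if say $q = pp'$ then $pP_1 \cap qP_1 = qP_1$, and otherwise the intersection is empty. In the nonempty case $\phi_P(pP_1 \cap qP_1)P_2 = \phi_P(q)P_2 \subseteq \phi_P(p)P_2$, and since $\phi_P$ is a homomorphism $\phi_P(q) = \phi_P(p)\phi_P(p')$, so $\phi_P(p)P_2 \cap \phi_P(q)P_2 = \phi_P(q)P_2$ as well; in the empty case both sides are empty. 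Hence (iv) holds automatically for \emph{every} morphism out of $\F_n^+$, so it imposes no constraint.

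Next I would analyse (v). The key point is again that in $\F_n^+$ the hypothesis $pP_1 \cap qP_1 \neq \emptyset$ forces (up to symmetry) $q = pp'$, and then one computes both sides of (v) directly. Because $\theta_1$ and $\theta_{2,\phi_P(\cdot)}$ are semigroup actions by endomorphisms, the product $\theta_{1,p}(G_1)\theta_{1,q}(G_1)$ simplifies: $\theta_{1,q}(G_1) = \theta_{1,p}(\theta_{1,p'}(G_1)) \subseteq \theta_{1,p}(G_1)$, so $\theta_{1,p}(G_1)\theta_{1,q}(G_1) = \theta_{1,p}(G_1)$. Likewise $\theta_{2,\phi_P(q)}(G_2) \subseteq \theta_{2,\phi_P(p)}(G_2)$, so the product of images in $G_2$ collapses to $\theta_{2,\phi_P(p)}(G_2)$. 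Thus (v), in the case $q = pp'$, is exactly the equation $\phi_G^{-1}(\phi_G(G_1) \cap \theta_{2,\phi_P(p)}(G_2)) = \theta_{1,p}(G_1)$. Quantifying over all $p$ (take $q = p$, which certainly satisfies $pP_1 \cap pP_1 \neq \emptyset$) shows this family of equations is equivalent to the single-parameter condition in the statement. Conversely, if the single-parameter condition holds for every $p \in P_1$, then for any $p,q$ with nonempty intersection the displayed collapse shows (v) holds. Combining with the fact that (iv) is automatic, a morphism is admissible iff $\phi_G^{-1}(\phi_G(G_1) \cap \theta_{2,\phi_P(p)}(G_2)) = \theta_{1,p}(G_1)$ for all $p$.

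The only mildly delicate point, and the one I would be careful to state explicitly, is the structure of intersections of principal right ideals in a free semigroup: $pP_1 \cap qP_1 \neq \emptyset$ in $\F_n^+$ exactly when $p$ and $q$ are \emph{comparable} in the prefix order, and then the intersection is the ideal generated by the longer word. This is elementary but it is what makes both reductions go through, so I would record it as the first line of the proof (it also explains why we do not need $n < \infty$: the argument is purely about pairs of words). Everything after that is the bookkeeping above: substituting $q = pp'$, using multiplicativity of $\phi_P$ and the semigroup-action property of $\theta_1, \theta_2$, and noting the containments of endomorphic images. I do not expect a genuine obstacle here --- the content is entirely in recognising that freeness trivialises (iv) and that the semigroup (as opposed to group) structure of $\F_n^+$ makes all the relevant products of images nest.
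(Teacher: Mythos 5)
Your argument follows essentially the same route as the paper's proof: there, too, one observes that in $\F_n^+$ two principal right ideals meet only when one word is a prefix of the other, assumes $q = pr$, obtains (iv) from multiplicativity of $\phi_P$, and collapses both products in (v) via the containments $\theta_{1,q}(G_1) \subseteq \theta_{1,p}(G_1)$ and $\theta_{2,\phi_P(q)}(G_2) \subseteq \theta_{2,\phi_P(p)}(G_2)$, so that (v) becomes precisely the displayed single-parameter condition. Your remark that taking $q=p$ recovers the condition for every $p$ is a harmless variant of the same bookkeeping.

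The one place where you assert more than the paper is the claim that in the disjoint case ``both sides are empty'', hence that (iv) is automatic for every morphism out of $\F_n^+$. That claim is false as stated: when $pP_1 \cap qP_1 = \emptyset$, condition (iv) of Definition~\ref{def:adm morphism of ADS} demands $\phi_P(p)P_2 \cap \phi_P(q)P_2 = \emptyset$, and nothing in the definition of a morphism forces this. For instance, take $G_1 = G_2 = \{1\}$ with the trivial actions, $P_1 = \F_2^+$ with generators $a,b$, $P_2 = \N$, and $\phi_P(a) = \phi_P(b) = 1$; then $aP_1 \cap bP_1 = \emptyset$ while $\phi_P(a)P_2 \cap \phi_P(b)P_2 = 1 + \N \neq \emptyset$, so (iv) fails even though your criterion holds trivially. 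The emptiness clause of (iv) is not decorative: in Lemma~\ref{lem:adm char funct prop} it is exactly what allows one to deduce $pP_1 \cap qP_1 \neq \emptyset$ from a nonempty intersection of ideals in $G_2 \rtimes_{\theta_2} P_2$ before invoking (v). In fairness, the paper's own proof makes the same silent restriction to comparable pairs (``we can assume $q \in pP_1$'') and never addresses incomparable $p,q$, so your write-up is no weaker than the published argument; but your explicit sentence is incorrect, and for $n \geq 2$ the ``if'' direction requires either adding the hypothesis that $\phi_P(p)P_2 \cap \phi_P(q)P_2 = \emptyset$ for incomparable $p,q$, or an argument --- missing both from your proposal and from the paper --- that this follows from the remaining assumptions.
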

\begin{proof}
Fix $p,q \in P_1$. Since $pP_1$ and $qP_1$ are disjoint unless $pP_1 \subset qP_1$ or $qP_1 \subset pP_1$, we can assume $q \in pP_1$, i.e., $q= pr$ for some $r \in P_1$. In this case,
\[\phi_P(p)P_2 \cap \phi_P(q)P_2 = \phi_P(p)P_2 \cap \phi_P(p)\phi_P(r)P_2 = \phi_P(q)P_2 = \phi_P(pP_1 \cap qP_1)P_2\]
shows (iv). In addition, we get $\theta_{1,p}(G_1)\theta_{1,q}(G_1) = \theta_{1,p}(G_1)$ and
\[\phi_G^{-1}\bigl(\phi_G(G_1) \cap \theta_{2,\phi_P(p)}(G_2)\theta_{2,\phi_P(q)}(G_2)\bigr)
= \phi_G^{-1}\bigl(\phi_G(G_1) \cap \theta_{2,\phi_P(p)}(G_2)\bigr),\]
so (v) is valid if and only if $\phi_G^{-1}\bigl(\phi_G(G_1) \cap \theta_{2,\phi_P(p)}(G_2)\bigr) = \theta_{1,p}(G_1)$ for all $p \in P_1$.
\end{proof}

\noindent The next two results show that admissible morphisms yield the right class of morphisms with respect to functoriality for $(G,P,\theta) \mapsto \AA[G,P,\theta]$.

\begin{lemma}\label{lem:adm char funct prop}
Suppose $(\phi_G,\phi_P): (G_1,P_1,\theta_1) \to (G_2,P_2,\theta_2)$ is a morphism
of algebraic dynamical systems. Then $\phi_G \rtimes \phi_P: G_1 \rtimes_{\theta_1} P_1 \to G_2 \rtimes_{\theta_2} P_2$ satisfies \eqref{eq:funct for right LCM} if and only if $(\phi_G,\phi_P)$ is admissible.
\end{lemma}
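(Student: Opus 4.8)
The plan is to unwind condition \eqref{eq:funct for right LCM} for the map $\phi_G \rtimes \phi_P$ by computing the intersection of the relevant principal right ideals in $G_2 \rtimes_{\theta_2} P_2$ using the explicit formula from Proposition~\ref{prop: structure of J}, and then matching the resulting conditions term by term with (iv) and (v) of Definition~\ref{def:adm morphism of ADS}. Concretely, I would fix $p,q \in P_1$, apply \eqref{eq:funct for right LCM} to the pair $(1,p),(1,q) \in G_1 \rtimes_{\theta_1} P_1$ (it suffices to treat such pairs since $X_{(g,p)} = (g,1)X_{(1,p)}$ and \eqref{eq:funct for right LCM} for a general pair reduces to the case of shifted ideals via (L2)-type manipulations, or more directly one checks the general pair at once), and compute both sides.

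First I would analyse the left-hand side $(1,\phi_P(p))\bigl(G_2\rtimes_{\theta_2}P_2\bigr) \cap (1,\phi_P(q))\bigl(G_2\rtimes_{\theta_2}P_2\bigr)$. By Proposition~\ref{prop: structure of J}, this is empty precisely when $\phi_P(p)P_2 \cap \phi_P(q)P_2 = \emptyset$; otherwise, writing $\phi_P(p)P_2 \cap \phi_P(q)P_2 = \rho P_2$, it equals $X_{(\theta_{2,\phi_P(p)}(k),\rho)}$ where $k \in G_2$ must be chosen so that $\theta_{2,\phi_P(p)}(k) \in \theta_{2,\phi_P(q)}(G_2)$, i.e. $\theta_{2,\phi_P(p)}(k) \in \theta_{2,\phi_P(p)}(G_2) \cap \theta_{2,\phi_P(q)}(G_2)$. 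Next I would analyse the right-hand side $\phi_{G\rtimes P}(X_{(1,p)} \cap X_{(1,q)})\bigl(G_2\rtimes_{\theta_2}P_2\bigr)$. Again by Proposition~\ref{prop: structure of J}, $X_{(1,p)} \cap X_{(1,q)}$ is empty iff $pP_1 \cap qP_1 = \emptyset$, and otherwise equals $X_{(1,r)}$ for $r \in P_1$ with $pP_1 \cap qP_1 = rP_1$ (using that $1 \cdot \theta_{1,p}(k) \in 1\cdot\theta_{1,q}(G_1)$ is automatically solvable by $k=1$); hence the right-hand side is $X_{(\phi_G(1),\phi_P(r))} = X_{(1,\phi_P(r))}$.

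Comparing the two: the non-emptiness conditions agree iff $\phi_P(p)P_2 \cap \phi_P(q)P_2 \neq \emptyset \Leftrightarrow pP_1 \cap qP_1 \neq \emptyset$, and in the non-empty case the two principal right ideals $X_{(\theta_{2,\phi_P(p)}(k),\rho)}$ and $X_{(1,\phi_P(r))}$ coincide iff $\rho P_2 = \phi_P(r)P_2$ and the $G_2$-components agree modulo the appropriate image, i.e. $\theta_{2,\phi_P(p)}(k) \in \theta_{2,\phi_P(r)}(G_2)$ — but since $k$ ranges exactly over the preimage of $\theta_{2,\phi_P(p)}(G_2)\cap\theta_{2,\phi_P(q)}(G_2)$, and by \eqref{eq:preserving the order} in $(G_2,P_2,\theta_2)$ this intersection is $\theta_{2,\rho}(G_2)$, the ideal equality forces $\rho P_2 = \phi_P(r)P_2$, which together with $pP_1\cap qP_1 = rP_1$ is exactly condition (iv). I would then isolate the residual content: after (iv) is in force, the remaining requirement is that every solution $k$ on the left is "seen" on the right, which translates, after applying $\phi_G$ and using that the left-hand $G_2$-component $\theta_{2,\phi_P(p)}(k)$ lies in $\theta_{2,\phi_P(p)}(G_2)\cap\theta_{2,\phi_P(q)}(G_2) = \theta_{2,\phi_P(p)}(G_2)\theta_{2,\phi_P(q)}(G_2) \cap \theta_{2,\phi_P(p)}(G_2)$, precisely into condition (v): $\phi_G^{-1}(\phi_G(G_1) \cap \theta_{2,\phi_P(p)}(G_2)\theta_{2,\phi_P(q)}(G_2)) = \theta_{1,p}(G_1)\theta_{1,q}(G_1)$ when $pP_1 \cap qP_1 \neq \emptyset$. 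The main obstacle I anticipate is the bookkeeping in passing from the "shifted" principal right ideals $X_{(g,p)}$ (general pairs) back to the $X_{(1,p)}$ case and keeping track of exactly which $G$-component data is constrained by the ideal equality versus which is free — in particular verifying carefully, using injectivity of the $\theta$'s and \eqref{eq:preserving the order} in both systems, that the equality of principal right ideals in $G_2\rtimes_{\theta_2}P_2$ decouples cleanly into "$P_2$-part" (giving (iv)) and "$G_2$-part modulo images" (giving (v)), with no hidden interaction between the two.
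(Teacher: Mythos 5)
Your overall strategy (compute both sides of \eqref{eq:funct for right LCM} for $\phi_G\rtimes\phi_P$ via Proposition~\ref{prop: structure of J} and match the outcome against (iv) and (v)) is the right one, but the reduction you build it on contains a genuine gap: testing \eqref{eq:funct for right LCM} only on pairs of the form $(1,p),(1,q)$ captures condition (iv) and nothing more. Indeed, for such a pair any admissible ``$k$'' satisfies $\theta_{2,\phi_P(p)}(k)\in\theta_{2,\phi_P(p)}(G_2)\cap\theta_{2,\phi_P(q)}(G_2)=\theta_{2,\rho}(G_2)$, so $X_{(\theta_{2,\phi_P(p)}(k),\rho)}=X_{(1,\rho)}$ independently of $k$; hence the ideal equality for this pair is exactly ``$\rho P_2=\phi_P(r)P_2$ together with the matching of emptiness'', i.e.\ (iv) alone. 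There is no ``residual content'' left from which to extract (v): the solutions $k$ you want to be ``seen on the right'' live in $G_2$, whereas (v) is a statement about elements of $G_1$. Condition (v) only becomes visible on pairs with nontrivial group components: by Proposition~\ref{prop: structure of J}, $(1,p)S_1\cap(h,q)S_1\neq\emptyset$ iff $pP_1\cap qP_1\neq\emptyset$ and $h\in\theta_{1,p}(G_1)\theta_{1,q}(G_1)$, while $\phi(1,p)S_2\cap\phi(h,q)S_2\neq\emptyset$ iff $\phi_P(p)P_2\cap\phi_P(q)P_2\neq\emptyset$ and $\phi_G(h)\in\theta_{2,\phi_P(p)}(G_2)\theta_{2,\phi_P(q)}(G_2)$; letting $h$ range over $\phi_G^{-1}\bigl(\phi_G(G_1)\cap\theta_{2,\phi_P(p)}(G_2)\theta_{2,\phi_P(q)}(G_2)\bigr)$ is what forces (v), and conversely both (iv) and (v) are needed to verify \eqref{eq:funct for right LCM} for a general pair $(g,p),(h,q)$. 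The shift $X_{(g,p)}=(g,1)X_{(1,p)}$ only normalises one of the two group components (it reduces a general pair to $g=1$ with $h$ arbitrary, replacing $h$ by $g^{-1}h$), not both, so the claimed reduction to the pair $(1,p),(1,q)$ is false and is precisely where the proof breaks down.

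That this is not a repairable bookkeeping issue can be seen from an example in the spirit of Remark~\ref{rem:adm morphism}~(d): take $G_1=\Z$, $P_1=\N$ acting by $\theta_{1,n}(m)=2^nm$, $G_2=\Z[1/2]$, $P_2=\Z$ acting by $\theta_{2,n}(x)=2^nx$, with $\phi_G,\phi_P$ the inclusions. Here (iv) holds, so \eqref{eq:funct for right LCM} holds for every pair with trivial group components, yet (v) fails, since $\phi_G^{-1}\bigl(\phi_G(G_1)\cap\theta_{2,\phi_P(p)}(G_2)\theta_{2,\phi_P(q)}(G_2)\bigr)=\Z$ while $\theta_{1,p}(G_1)\theta_{1,q}(G_1)=2^{\min(p,q)}\Z$. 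So no argument that only tests the pairs $(1,p),(1,q)$ can prove the lemma. As a minor additional point, the identity $\theta_{2,\phi_P(p)}(G_2)\cap\theta_{2,\phi_P(q)}(G_2)=\theta_{2,\phi_P(p)}(G_2)\theta_{2,\phi_P(q)}(G_2)\cap\theta_{2,\phi_P(p)}(G_2)$ you invoke in the ``residual content'' step is not valid in general (already for subgroups $p\Z,q\Z$ of $\Z$). The fix is to run your computation for arbitrary pairs $(g,p),(h,q)$ from the start, which is exactly how the paper proceeds.
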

\begin{proof}
For convenience, let us denote $S_i := G_i \rtimes_{\theta_i}P_i$ for $i=1,2$ and $\phi:=\phi_G \rtimes \phi_P$. Pick $p,q \in P_1$. We need to show that the equality
\begin{equation}\label{eq:funct for GxP}
\phi(g,p) S_2 \cap \phi(h,q) S_2 = \phi\bigl((g,p)S_1 \cap (h,q)S_1\bigr) S_2
\end{equation}
holds for all $g,h \in G_1$ if and only if we have
\[\begin{array}{rrcll}
\text{(iv)}&\phi_P(p)P_2 \cap \phi_P(q)P_2 &\hspace*{-0.2mm}=\hspace*{-0.2mm}& \phi_P(pP_1 \cap qP_1)P_2 , &\text{and}\vspace*{2mm}\\
\text{(v)}&\phi_G^{-1}\bigl(\phi_G(G_1) \cap \theta_{2,\phi_P(p)}(G_2)\theta_{2,\phi_P(q)}(G_2)\bigr) &\hspace*{-0.2mm}=\hspace*{-0.2mm}& \theta_{1,p}(G_1)\theta_{1,q}(G_1) &\text{if } pP_1 \cap qP_1 \neq \emptyset.
\end{array}\]
It was noticed in Remark~\ref{rem:funct for right LCM I} and Remark~\ref{rem:adm morphism}~(a) that,
for all of these equations, the set on the right is contained in the set on the left. Moreover, Proposition~\ref{prop: structure of J} implies that $(g,p)S_1 \cap (h,q)S_1$ is non-empty if and only if
\[pP_1 \cap qP_1 \neq \emptyset \text{ and } g^{-1}h \in \theta_{1,p}(G_1)\theta_{1,q}(G_1).\]
Likewise, $\phi(g,p) S_2 \cap \phi(h,q) S_2$ is non-empty if and only if
\begin{equation}\label{eq:non-empty intersection GxP morphism ideal}
\phi_P(p)P_2 \cap \phi_P(q)P_2 \neq \emptyset \text{ and }
\phi_G(g^{-1}h) \in \theta_{2,\phi_P(p)}(G_2)\theta_{2,\phi_P(q)}(G_2).
\end{equation}
Now suppose \eqref{eq:funct for GxP} holds. For $g=h =1$ and $p,q \in P_1$, \eqref{eq:funct for GxP} implies $\phi_P(p)P_2 \cap \phi_P(q)P_2 = \phi_P(pP_1 \cap qP_1)P_2 $, which is (iv). Next, assume that $pP_1 \cap qP_1 \neq \emptyset$ and take $g=1, h \in \phi_G^{-1}\bigl(\phi_G(G_1) \cap \theta_{2,\phi_P(p)}(G_2)\theta_{2,\phi_P(q)}(G_2)\bigr)$. We then have $\phi(1,p) S_2 \cap \phi(h,q) S_2 \neq \emptyset$. By \eqref{eq:funct for GxP}, this implies $(1,p)S_1 \cap (h,q)S_1 \neq \emptyset$. Hence $h \in \theta_{1,p}(G_1)\theta_{1,q}(G_1)$ and (v) holds.

Conversely, suppose (iv) and (v) are satisfied. If $\phi(g,p) S_2 \cap \phi(h,q) S_2$ is empty, then there is nothing to show, so assume it is non-empty. As noted before, this means that
we have \eqref{eq:non-empty intersection GxP morphism ideal}. In this case, (v) implies $g^{-1}h \in \theta_{1,p}(G_1)\theta_{1,q}(G_1)$. Together with (iv) this yields \eqref{eq:funct for GxP} for $(g,p)$ and $(h,q)$.
\end{proof}

\begin{cor}\label{cor:functoriality for adm morphism of ADS}
A morphism $(\phi_G,\phi_P): (G_1,P_1,\theta_1) \to (G_2,P_2,\theta_2)$ of algebraic dynamical systems induces a $*$-homomorphism $\varphi_G \rtimes \varphi_P:\AA[G_1,P_1,\theta_1] \to \AA[G_2,P_2,\theta_2]$ if and only if $(\phi_G,\phi_P)$ is admissible.
\end{cor}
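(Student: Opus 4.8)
The plan is to combine the two earlier results that do all the heavy lifting. By Theorem~\ref{thm: A is a semigroup algebra}, we have canonical isomorphisms $\AA[G_i,P_i,\theta_i] \cong C^*(G_i \rtimes_{\theta_i} P_i)$ for $i=1,2$, and by \cite[Proposition 8.2]{BLS1} each $G_i \rtimes_{\theta_i} P_i$ is a right LCM semigroup with identity. So the question of whether $(\phi_G,\phi_P)$ induces a $*$-homomorphism on the $C^*$-algebra level translates, under these identifications, into the question of whether the semigroup homomorphism $\phi := \phi_G \rtimes \phi_P: G_1 \rtimes_{\theta_1} P_1 \to G_2 \rtimes_{\theta_2} P_2$ (which is an identity-preserving semigroup homomorphism by Lemma~\ref{lem:morphism of ADS induces sgp hom of GxP}) induces a $*$-homomorphism $C^*(G_1 \rtimes_{\theta_1} P_1) \to C^*(G_2 \rtimes_{\theta_2} P_2)$ sending generating isometries to generating isometries.

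First I would invoke Theorem~\ref{thm:funct for right LCM} with $S_i = G_i \rtimes_{\theta_i} P_i$: it tells us precisely that $\phi$ induces such a $*$-homomorphism if and only if $\phi$ satisfies the factorisation condition \eqref{eq:funct for right LCM}, namely $\phi(a)S_2 \cap \phi(b)S_2 = \phi(aS_1 \cap bS_1)S_2$ for all $a,b \in S_1$. Then I would apply Lemma~\ref{lem:adm char funct prop}, which says exactly that $\phi = \phi_G \rtimes \phi_P$ satisfies \eqref{eq:funct for right LCM} if and only if $(\phi_G,\phi_P)$ is admissible. Chaining these equivalences gives the corollary. The only remaining point is to check that the $*$-homomorphism obtained from Theorem~\ref{thm:funct for right LCM} transports, via the isomorphisms $\varphi$ of Theorem~\ref{thm: A is a semigroup algebra}, to a map $\AA[G_1,P_1,\theta_1] \to \AA[G_2,P_2,\theta_2]$ sending $u_g \mapsto u_{\phi_G(g)}$ and $s_p \mapsto s_{\phi_P(p)}$; this is immediate since $\varphi(u_g) = v_{(g,1)}$, $\varphi(s_p) = v_{(1,p)}$, and $\phi(g,1) = (\phi_G(g),1)$, $\phi(1,p) = (1,\phi_P(p))$, so composing the three arrows does the right thing on generators, and this composite is by definition $\varphi_G \rtimes \varphi_P$.

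There is essentially no obstacle here, since the two substantive results (Theorem~\ref{thm:funct for right LCM} and Lemma~\ref{lem:adm char funct prop}) have already been proved; the corollary is a formal consequence. The one thing to be a little careful about is the direction of the ``if and only if'': Theorem~\ref{thm: A is a semigroup algebra} gives genuine isomorphisms (not just surjections), so a $*$-homomorphism on one side exists with the prescribed action on generators if and only if the corresponding one exists on the other side, and the generator correspondence is compatible as noted above. Hence the biconditional passes through cleanly, and I would simply write the proof as the two-line chain ``$\varphi_G \rtimes \varphi_P$ exists $\iff$ $\phi_G \rtimes \phi_P$ induces a $*$-homomorphism of the semigroup $C^*$-algebras $\iff$ (by Theorem~\ref{thm:funct for right LCM}) $\phi_G \rtimes \phi_P$ satisfies \eqref{eq:funct for right LCM} $\iff$ (by Lemma~\ref{lem:adm char funct prop}) $(\phi_G,\phi_P)$ is admissible.''
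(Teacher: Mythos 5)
Your proposal is correct and follows exactly the paper's argument: the paper's proof is precisely the combination of Lemma~\ref{lem:adm char funct prop}, Theorem~\ref{thm:funct for right LCM} and Theorem~\ref{thm: A is a semigroup algebra}, with your additional check on the generator correspondence being the routine detail the paper leaves implicit.
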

\begin{proof}
This follows from applying Lemma~\ref{lem:adm char funct prop}, Theorem~\ref{thm:funct for right LCM} and Theorem~\ref{thm: A is a semigroup algebra}.
\end{proof}

\section{\texorpdfstring{The $K$-theory for right LCM semigroups that are left Ore}{The K-theory for right LCM semigroups that are left Ore}}\label{sec: K-theory}

\noindent  The goal of this section is to compute the $K$-theory of $C^*_r(S)$ for a large class of right LCM semigroups. In view of Theorem~\ref{thm: A is a semigroup algebra}, we will thus obtain the $K$-theory of $\AA[G,P,\theta]$ whenever $\gxp$ is left Ore and the left regular representation from $C^*(\gxp)$ onto $C_r^*(\gxp)$ is an isomorphism. As we shall see, right LCM left Ore semigroups form a natural class to which  the results in \cite[\S 7]{CEL1} apply nicely: thus, as an outcome we will obtain  a stronger statement than just a computation of the $K$-groups.

Recall that a semigroup $S$ is \emph{right reversible} if every pair of non-empty left ideals has non-empty intersection. If $S$ is cancellative and right reversible, then it is called a \emph{left Ore semigroup}. It is well-known and an essential reason for the importance of the class of left Ore semigroups that a semigroup $S$ is left Ore if and only if it embeds into a group $\mathcal{G}$ such that $\mathcal{G} = S^{-1}S$. The standing assumption in \cite[\S 7]{CEL1} is that $S$ is a left Ore semigroup such that the family $\JJ(S)$ of constructible right ideals is independent. By \cite[Corollary 3.6]{BLS1},  every right LCM semigroup $S$ with identity satisfies that $\JJ(S)$ is independent. For the remainder of the section we assume that $S$ is a right LCM left Ore semigroup with identity, and we let $\mathcal{G}(S)$ denote its enveloping group. Recall also that $S^*$ is the group of invertible elements in $S$.

We recall now notation and facts from \cite{CEL1}.  For $q\in S$ and $X\in \JJ(S)\setminus\{\emptyset\}$, we let $q^{-1}\cdot X$ denote the subset
$\{q^{-1}r\mid r\in X\}$ of $\mathcal{G}(S)$. This should not be confused with the right ideal $q^{-1}X$ defined in Section~\ref{sec: semigroup description}. However, if $X$ is a subset of $qS$, then $q^{-1}X \subset S$ can be identified with $q^{-1}\cdot X \subset \mathcal{G}(S)$ using the canonical embedding $S\subset \mathcal{G}(S)=S^{-1}S$ because $q^{-1}\cdot X \subset 1^{-1}S$.
Let $Y$ denote the set
\[
S^{-1}\cdot (\JJ(S)\setminus\{\emptyset\}):=\{ q^{-1}\cdot X\mid q\in S, X\in \JJ(S)\setminus\{\emptyset\}\}.
\]
The group $\mathcal{G}(S)$ acts on $Y$ by left multiplication. This action can be described as follows: for $s_1^{-1}s_2 \in \mathcal{G}(S)$ and $t_1^{-1}\cdot t_2S$ with $s_i,t_i \in S$, the left Ore condition for $S$ implies that there are $r_1,r_2 \in S$ such that $s_1^{-1}s_2t_1^{-1}t_2 = r_1^{-1}r_2$ and therefore
\[s_1^{-1}s_2 \cdot (t_1^{-1}\cdot t_2S) = r_1^{-1}\cdot r_2S.\]
For  $X\in \JJ(S)\setminus\{\emptyset\}$, the stabiliser subgroup is
\[
\mathcal{G}(S)^{X}=\{g\in \mathcal{G}(S)\mid g\cdot (1^{-1}\cdot X)=1^{-1}\cdot X\}.
\]
We need to choose a complete set of representatives $\mathcal{X}$ for the quotient $\mathcal{G}(S)\backslash Y$ with the property that $\mathcal{X}\subseteq \JJ(S)\setminus\{\emptyset\}$.

\begin{lemma} The action of $\mathcal{G}(S)$ on $Y$ is transitive. In particular, $\{S\}$ is a complete set of representatives for the quotient $\mathcal{G}(S)\backslash Y$.
\end{lemma}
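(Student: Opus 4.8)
The goal is to show that $\mathcal{G}(S)$ acts transitively on $Y = S^{-1}\cdot(\JJ(S)\setminus\{\emptyset\})$, from which the second assertion is immediate: if the action is transitive, then there is a single orbit, and since $S = 1^{-1}\cdot S \in Y$ (as $S \in \JJ(S)$), the singleton $\{S\}$ is a complete set of representatives.

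\textbf{Key steps.} First I would recall from Proposition~\ref{prop: structure of J} (or rather the general right LCM fact cited in Section~\ref{sec: semigroup description}, namely \cite[Lemma 3.3]{BLS1}) that every nonempty constructible right ideal $X \in \JJ(S)$ has the form $X = pS$ for some $p \in S$. Hence a typical element of $Y$ is of the form $q^{-1}\cdot pS$ with $p,q \in S$. Second, I would show that each such element lies in the $\mathcal{G}(S)$-orbit of $S$: the group element $g = q^{-1}p \in \mathcal{G}(S)$ satisfies
\[
g \cdot (1^{-1}\cdot S) = q^{-1}p \cdot S = q^{-1}\cdot pS,
\]
where the middle equality uses the description of the action given just above in the excerpt (taking $t_1 = 1$, $t_2$ ranging over $S$, so that $q^{-1}p \cdot 1^{-1}\cdot S$ is computed via the left Ore condition and equals $q^{-1}\cdot pS$; indeed $pS \subseteq pS = qS \cdot (q^{-1}p)$ in $\mathcal{G}(S)$, and $q^{-1}p \cdot s = q^{-1}(ps)$ for each $s \in S$). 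This exhibits $q^{-1}\cdot pS$ as $g \cdot S$, proving every element of $Y$ is in the orbit of $S$, i.e.\ transitivity. Finally, transitivity says $\mathcal{G}(S)\backslash Y$ is a single point, so $\{S\}$ — which is a subset of $\JJ(S)\setminus\{\emptyset\}$ as required for the choice of $\mathcal{X}$ — is a complete set of representatives.

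\textbf{Main obstacle.} The only delicate point is verifying carefully that the formula $q^{-1}p \cdot (1^{-1}\cdot S) = q^{-1}\cdot pS$ is consistent with the general description of the $\mathcal{G}(S)$-action on $Y$ recalled above, since that description is phrased in terms of solving the left Ore condition $s_1^{-1}s_2 t_1^{-1}t_2 = r_1^{-1}r_2$; here one should note that with $s_1 = q$, $s_2 = p$, $t_1 = 1$, one may simply take $r_1 = q$, $r_2 = p t_2$, so that the action of $q^{-1}p$ on $1^{-1}\cdot t_2 S = t_2 S$ (for $t_2 \in S$) gives $q^{-1}\cdot pt_2 S$, and letting $t_2 = 1$ recovers $q^{-1}\cdot pS$. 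Everything else is routine, and no essential use of independence of $\JJ(S)$ is needed here — only the left Ore property (to make sense of $\mathcal{G}(S) = S^{-1}S$ and of the action) and the right LCM structure of $\JJ(S)$ (to write nonempty constructible ideals as principal ones).
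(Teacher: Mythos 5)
Your proposal is correct and follows essentially the same route as the paper: both rest on the fact that every nonempty constructible right ideal of a right LCM semigroup is principal, together with the explicit computation of the action, the paper noting $r\cdot(1^{-1}\cdot S)=1^{-1}\cdot rS$ and exhibiting a group element mapping one arbitrary element of $Y$ to another, while you equivalently show every $q^{-1}\cdot pS$ lies in the orbit of the basepoint $S$ via $g=q^{-1}p$. The difference is purely cosmetic, and your careful check of the Ore-condition formula is consistent with the paper's description of the action.
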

\begin{proof} Let $X_1=t_1S$ and $X_2=t_2S$ for $t_1, t_2\in S$ be two elements of $\JJ(S)\setminus\{\emptyset\}$, see \cite[Lemma 3.3]{BLS1}. Let $s_1,s_2\in S$. We need to find $g\in \mathcal{G}(S)$ such that $g\cdot (s_1^{-1}\cdot X_1)=s_2^{-1}\cdot X_2$. Note that $r\cdot(1^{-1}\cdot S)=1^{-1}\cdot rS$ for all $r\in S$. Hence, the choice $g=s_2^{-1}t_2t_1^{-1}s_1$ works.
\end{proof}

\begin{lemma} The stabiliser group $\mathcal{G}(S)^S$ of $S \in Y$ is $S^*$.
\end{lemma}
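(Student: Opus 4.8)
The plan is to show the two inclusions $\mathcal{G}(S)^S \supseteq S^*$ and $\mathcal{G}(S)^S \subseteq S^*$ separately, working throughout inside the enveloping group $\mathcal{G}(S) = S^{-1}S$, where by definition $\mathcal{G}(S)^S = \{g \in \mathcal{G}(S) \mid g \cdot (1^{-1}\cdot S) = 1^{-1}\cdot S\}$, and where we recall the identification $1^{-1}\cdot S = S$ as subsets of $\mathcal{G}(S)$.

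For the easy inclusion, suppose $x \in S^*$. Then $x \cdot (1^{-1}\cdot S) = 1^{-1}\cdot xS$, and since $x$ is a unit, $xS = S$; hence $x \in \mathcal{G}(S)^S$. More generally one should note that the elements of $\mathcal{G}(S)^S$ that happen to lie in $S$ are exactly the units: if $r \in S$ satisfies $r \cdot (1^{-1}\cdot S) = 1^{-1}\cdot rS = 1^{-1}\cdot S$, then $rS = S$, which forces $r \in S^*$ because $S$ has an identity. So the whole content is the reverse inclusion: an element $g = s_1^{-1}s_2 \in \mathcal{G}(S)^S$ with $s_1, s_2 \in S$ must already lie in $S^*$.

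For the hard inclusion, take $g = s_1^{-1}s_2 \in \mathcal{G}(S)^S$, so $s_1^{-1}s_2 \cdot S = S$ inside $\mathcal{G}(S)$; multiplying on the left by $s_1$ (which is invertible in the group) gives $s_2 S = s_1 S$ as subsets of $\mathcal{G}(S)$, hence as subsets of $S$. Since $S$ is right LCM with identity, $s_1 S = s_2 S$ means there is a unit $x \in S^*$ with $s_2 = s_1 x$ (this is precisely the observation already used twice in Section~2, e.g. in the discussion following Definition~\ref{def:ADS} and in the verification that (A2) is well defined). Therefore $g = s_1^{-1}s_2 = s_1^{-1}s_1 x = x \in S^*$, as required. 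Combining the two inclusions gives $\mathcal{G}(S)^S = S^*$.

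I do not expect a serious obstacle here; the only point that needs a little care is keeping straight the distinction — emphasised in the paragraph preceding the lemma — between the group-theoretic operation $q^{-1}\cdot X \subseteq \mathcal{G}(S)$ and the right-ideal operation $q^{-1}X \subseteq S$, and making sure the cancellation $s_2 S = s_1 S$ is carried out in $\mathcal{G}(S)$, where $s_1$ genuinely has an inverse, before translating the conclusion $s_1 S = s_2 S$ back into a statement about principal right ideals of $S$ to invoke the right LCM structure. One might also remark that this lemma specialises to the familiar fact that for left Ore semigroups the stabiliser of the ``top'' ideal is the unit group, and that in the earlier-quoted description $\JJ(S) = \{\emptyset\} \cup \{pS \mid p \in S\}$ from \cite[Lemma 3.3]{BLS1} the representative $S = 1\cdot S$ is the one fixed by the most symmetry.
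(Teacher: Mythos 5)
Your argument is correct and is essentially the paper's own proof: the paper verifies the same chain of equivalences ($s^{-1}t$ stabilises $1^{-1}\cdot S$ iff $tS=sS$ iff $s^{-1}t\in S^*$), which is exactly your translate-by-$s_1$ step followed by the standard ``equal principal right ideals differ by a unit'' fact (which, as a small aside, uses only left cancellation and the identity, not the full right LCM hypothesis). No gaps; the care you take in passing between subsets of $\mathcal{G}(S)$ and right ideals of $S$ is precisely the point the paper's preceding paragraph flags.
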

\begin{proof} It suffices to note that for all $s,t$ in $S$ we have
\begin{align*}
s^{-1}t\in \mathcal{G}(S)^S&\iff s^{-1}t\cdot (1^{-1}\cdot S)=(1^{-1}\cdot S)\\
&\iff 1^{-1}\cdot tS=1^{-1}\cdot sS\\
&\iff tS=sS\\
&\iff s^{-1}t\in S^*.
\end{align*}
\end{proof}

\noindent It is proved in \cite[Lemma 4.2]{CEL1} that the reduced semigroup $C^*$-algebra $C_r^*(S)$ is Morita equivalent to a certain reduced crossed product $D_r^{(\infty)}(S)\rtimes_r \mathcal{G}(S)$. Let $\{V_s\}_{s\in S}$ denote the family of isometries on $l^2(S)$ that generates $C^*_r(S)$ and consider the homomorphism
 \[
 \Psi_S:C_r^*(S^*)\to C_r^*(S),\lambda_{s^{-1}t}\mapsto V_s^*V_t \text{ for }s,t\in S
 \]
 from \cite[Lemma 7.2]{CEL1}.

We are now ready to state the main result of this subsection. With the preparation above, the proof is a direct application of  \cite[Corollary 7.4]{CEL1}. As in \cite{CEL1}, $K_\ast$ stands for the direct sum of $K_0$ and $K_1$ viewed as a $\mathbb{Z}/2\mathbb{Z}$-graded abelian group and $K^*$ is the $K$-homology.

\begin{thm}\label{thm-Kgroups-red-semigroup-algebra}
If the enveloping group $\mathcal{G}(S)$ of $S$ satisfies the Baum-Connes conjecture with coefficients in the $\mathcal{G}(S)$-algebras $c_0(Y)$ and $D_r^{(\infty)}(S)$, then $K_*(\Psi_S):K_*(C^*_r(S^*))\to K_*(C^*_r(S))$ is an isomorphism.
If, moreover, $c_0(Y)\rtimes_{r} \mathcal{G}(S)$ and $D_r^{(\infty)}(S)\rtimes_r \mathcal{G}(S)$ satisfy the UCT or $\mathcal{G}(S)$ satisfies the strong Baum-Connes conjecture with coefficients in  $c_0(Y)$ and $D_r^{(\infty)}(S)$, then $K^*(\Psi_S):K^*(C^*_r(S^*))\to K^*(C^*_r(S))$ is an isomorphism.
\end{thm}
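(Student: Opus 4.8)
The plan is to reduce the statement to a direct citation of \cite[Corollary 7.4]{CEL1}, so the work lies entirely in verifying that the hypotheses of that result are met in our situation. Recall that \cite[\S 7]{CEL1} operates under the standing assumption that $S$ is a left Ore semigroup with $\JJ(S)$ independent; by the running hypothesis of this section $S$ is right LCM and left Ore with identity, and $\JJ(S)$ is independent by \cite[Corollary 3.6]{BLS1}, so this assumption is in force. First I would recall from \cite[Corollary 7.4]{CEL1} that its conclusion is phrased in terms of a set of representatives $\mathcal{X}$ for $\mathcal{G}(S)\backslash Y$ contained in $\JJ(S)\setminus\{\emptyset\}$, together with the stabiliser groups $\mathcal{G}(S)^X$ for $X\in\mathcal{X}$: under the stated Baum--Connes hypotheses the map $\bigoplus_{X\in\mathcal{X}}K_*(\Psi_S^X):\bigoplus_{X\in\mathcal{X}}K_*(C^*_r(\mathcal{G}(S)^X))\to K_*(C^*_r(S))$ is an isomorphism, and similarly in $K$-homology under the UCT or strong Baum--Connes hypotheses.

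The key simplification is that by the two lemmas immediately preceding the theorem, the action of $\mathcal{G}(S)$ on $Y$ is transitive, so $\mathcal{X}=\{S\}$ is a valid choice of representatives, and the single stabiliser $\mathcal{G}(S)^S$ equals $S^*$. Thus the indexing set in \cite[Corollary 7.4]{CEL1} collapses to a single term, the direct sum disappears, and $\Psi_S^S$ is exactly the map $\Psi_S:C^*_r(S^*)\to C^*_r(S)$ described above via $\lambda_{s^{-1}t}\mapsto V_s^*V_t$. Substituting these identifications into \cite[Corollary 7.4]{CEL1} yields precisely that $K_*(\Psi_S)$ is an isomorphism under the first set of hypotheses, and that $K^*(\Psi_S)$ is an isomorphism under the second; the Baum--Connes hypotheses we impose — with coefficients in $c_0(Y)$ and $D_r^{(\infty)}(S)$, and the UCT for the two crossed products — are verbatim the hypotheses appearing there, so no further translation is needed.

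The only point that requires a small argument is checking that the hypothesis data of \cite[Corollary 7.4]{CEL1} — the $\mathcal{G}(S)$-algebras, the crossed products, and the homomorphism $\Psi_S$ — genuinely coincide with the objects named in our statement; this is handled by the recollection in the paragraph preceding the theorem, namely \cite[Lemma 4.2]{CEL1} (Morita equivalence of $C^*_r(S)$ with $D_r^{(\infty)}(S)\rtimes_r\mathcal{G}(S)$) and \cite[Lemma 7.2]{CEL1} (the definition of $\Psi_S$). I do not expect any serious obstacle: the content is that right LCM left Ore semigroups with identity form a class for which the general $K$-theoretic machinery of \cite{CEL1} specialises cleanly, and the transitivity of the $\mathcal{G}(S)$-action on $Y$ — which is where right LCM-ness genuinely enters, via $\JJ(S)=\{\emptyset\}\cup\{pS\mid p\in S\}$ from \cite[Lemma 3.3]{BLS1} — is the one nontrivial input, and it has already been established in the two lemmas above. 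Hence the proof is essentially a bookkeeping verification, and I would write it as such.
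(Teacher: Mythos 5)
Your proposal is correct and matches the paper's own argument: the paper likewise proves this theorem as a direct application of \cite[Corollary 7.4]{CEL1}, using the two preceding lemmas (transitivity of the $\mathcal{G}(S)$-action on $Y$ and $\mathcal{G}(S)^S=S^*$) together with the standing assumptions that $S$ is right LCM, left Ore, and has $\JJ(S)$ independent. No gaps to report.
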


\noindent We now apply Theorem~\ref{thm-Kgroups-red-semigroup-algebra} to the semigroups arising from algebraic dynamical systems $(G, P,\theta)$. Our main result is a considerable generalisation of \cite[Theorem 6.3.4]{CEL2}.


\begin{prop}\label{prop: semigp is left Ore}
If $(G, P,\theta)$ is an algebraic dynamical system, then the group of invertible elements $(\gxp)^*$ is isomorphic to $G\rtimes_\theta P^*$ and $\gxp$ is left Ore precisely when $P$ is left Ore.
\end{prop}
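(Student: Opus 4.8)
The plan is to prove the two assertions separately, both by direct computation in the semidirect product $\gxp$, whose multiplication is $(g,p)(h,q) = (g\theta_p(h), pq)$.

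\textit{The group of units.} First I would identify $(\gxp)^*$. A short computation shows that $(g,p)$ is invertible in $\gxp$ precisely when $p \in P^*$: if $p \in P^*$ with inverse $p^{-1}$, then $(\theta_{p^{-1}}(g^{-1}), p^{-1})$ is a two-sided inverse for $(g,p)$ since $\theta_{p^{-1}}$ is a group automorphism of $G$; conversely, if $(g,p)(h,q) = (1,1)$ then in particular $pq = 1$, so $p$ has a right inverse, and a symmetric argument using a left inverse (or the fact that in a cancellative semigroup with identity a one-sided inverse that exists on both sides forces invertibility) gives $p \in P^*$. Hence as a set $(\gxp)^* = G \times P^* = G \rtimes_\theta P^*$, and since the multiplication is just the restriction of the one on $\gxp$, this is a semigroup (in fact group) isomorphism. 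Here I use the observation already made in the excerpt that $\theta_x \in \Aut(G)$ for $x \in P^*$.

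\textit{The left Ore property.} Recall $\gxp$ is cancellative (it is left cancellative by \cite[Proposition 8.2]{BLS1}, and right cancellativity follows since $G$ is a group and each $\theta_p$ is injective), so $\gxp$ is left Ore iff it is right reversible, i.e.\ any two nonempty left ideals of $\gxp$ meet. Equivalently, for all $(g,p),(h,q) \in \gxp$ there exist $(a,m),(b,n)$ with $(a,m)(g,p) = (b,n)(h,q)$. For the forward implication, if $\gxp$ is left Ore then projecting the relation $(a,m)(g,p) = (b,n)(h,q)$ onto the $P$-coordinate gives $mp = nq$, so $Pp \cap Pq \neq \emptyset$; since $(g,p),(h,q)$ were arbitrary with arbitrary $p,q \in P$, this shows $P$ is right reversible, hence (being cancellative) left Ore. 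For the converse, assume $P$ is left Ore and fix $(g,p),(h,q)$. By left Ore for $P$ pick $m,n \in P$ with $mp = nq =: t$. I now need to choose $a, b \in G$ so that $a\theta_m(g) = b\theta_n(h)$; since $G$ is a group I may simply take $a = 1$ and $b = \theta_m(g)\theta_n(h)^{-1}$. Then $(1,m)(g,p) = (\theta_m(g), t) = (b\theta_n(h), t) = (b,n)(h,q)$, establishing right reversibility of $\gxp$, hence the left Ore property.

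\textit{Main obstacle.} The computations above are essentially routine; the only point requiring a little care is the cancellativity bookkeeping — making sure $\gxp$ is genuinely cancellative (not merely left cancellative) so that ``right reversible'' is equivalent to ``left Ore'', and similarly for $P$ in the forward direction. I expect that to be the sole subtlety, and it is resolved by the standing hypothesis that $G$ is a group and the $\theta_p$ are injective endomorphisms. Once that is in hand, the group-theoretic freedom in the $G$-coordinate makes the Ore condition for $\gxp$ reduce cleanly to the Ore condition for $P$.
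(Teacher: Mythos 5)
The overall route you take is the same as the paper's (identify the units directly, reduce the Ore question for $\gxp$ to right reversibility plus a cancellativity check, and produce a common left multiple by adjusting the $G$-coordinate), and your unit-group and right-reversibility computations are correct. However, there is a genuine flaw in exactly the step you single out as the ``only subtlety'': the claim that $\gxp$ is right cancellative ``since $G$ is a group and each $\theta_p$ is injective'' is false. Writing out $(g_1,p_1)(h,q)=(g_2,p_2)(h,q)$ gives $p_1q=p_2q$ and $g_1\theta_{p_1}(h)=g_2\theta_{p_2}(h)$; taking $g_1=g_2=h=1$ shows that right cancellativity of $\gxp$ forces right cancellativity of $P$ (and, given that $G$ is a group, is equivalent to it). A right LCM semigroup is only assumed left cancellative, so right cancellativity of $P$ is not part of the standing hypotheses, and hence cancellativity of $\gxp$ is not available unconditionally. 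The same oversight recurs in your forward implication, where you conclude that $P$ is left Ore ``(being cancellative)'': right cancellativity of $P$ is not automatic there either; it must be deduced from right cancellativity of $\gxp$ (which is part of the left Ore hypothesis) via the embedding $p\mapsto(1,p)$.

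Both points are repairable, and the repair is precisely what the paper does. In the direction ``$P$ left Ore $\Rightarrow\ \gxp$ left Ore'', right cancellativity of $P$ is part of the hypothesis, and then $(g_1\theta_{p_1}(h),p_1q)=(g_2\theta_{p_2}(h),p_2q)$ forces $p_1=p_2$ by right cancellation in $P$ and $g_1=g_2$ by cancellation in the group $G$; this verification must be carried out, it cannot be waved away by the group structure of $G$ alone. In the direction ``$\gxp$ left Ore $\Rightarrow P$ left Ore'', note explicitly that both right reversibility and right cancellativity descend from $\gxp$ to $P$ (project to the $P$-coordinate, respectively restrict to elements $(1,p)$). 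With these two insertions your argument is complete; the identification $(\gxp)^*=G\rtimes_\theta P^*$, which you verify by hand where the paper cites an earlier lemma, and the common-left-multiple computation $(1,m)(g,p)=\bigl(\theta_m(g)\theta_n(h)^{-1},n\bigr)(h,q)$ coincide with the paper's.
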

\begin{proof}
The claim about the group of units follows, for example, from \cite[Lemma 2.4]{BLS1}.
It is clear that $P$ needs to be right reversible and right cancellative whenever $\gxp$ has these two properties. In other words, $P$ itself has to be a left Ore semigroup as left cancellation is assumed throughout. So let us suppose that this $P$ is left Ore. If we have $(g_1\theta_{p_1}(h),p_1q) = (g_2\theta_{p_2}(h),p_2q)$, then right cancellation for $P$ and then for $G$ forces $p_1 = p_2$ and $g_1 = g_2$. To see that $\gxp$ is right reversible, consider the left ideals $(\gxp)(g,p)$ and $(\gxp)(h,q)$. By assumption, there exists $r \in P$ with $r=p'p=q'q$ and hence
\[
(1,q')(h,q)=(\theta_{q'}(h),r)=(\theta_{q'}(h)\theta_{p'}(g)^{-1},p')(g,p)\in (\gxp)(g,p)\cap (\gxp)(h,q).
\]
So $\gxp$ is right reversible, and hence is a left Ore semigroup.
\end{proof}

\noindent Recall that $\lambda:C^*(\gxp)\to C^*_r(\gxp)$ denotes the left regular representation and $\varphi:\AA[G,P,\theta]\to C^*(\gxp)$ is the isomorphism from Theorem~\ref{thm: A is a semigroup algebra}. We have the following consequence of Theorem~\ref{thm-Kgroups-red-semigroup-algebra}.

\begin{cor}\label{cor-Kgroups-red-semigroup-algebra}
Let $(G, P,\theta)$ be an algebraic dynamical system such that $P$ is a left Ore semigroup and let $\AA[G,P,\theta]$ be its associated $C^*$-algebra. Assume that $\lambda$ is an isomorphism.
If $\mathcal{G}(\gxp)$ satisfies the Baum-Connes conjecture with coefficients in the $\mathcal{G}(\gxp)$-algebras $c_0(Y)$ and $D_r^{(\infty)}(\gxp)$, then
\[
K_*\bigl((\lambda \circ\varphi)^{-1}\circ \Psi_{\gxp}\bigr):K_*(C^*_r(G\rtimes_\theta P^*))\to K_*(\AA[G,P,\theta])
\]
is an isomorphism.

If, moreover, $c_0(Y)\rtimes_{r} \mathcal{G}(\gxp)$ and $D_r^{(\infty)}(\gxp)\rtimes_r \mathcal{G}(\gxp)$ satisfy the UCT or $\mathcal{G}(\gxp)$ satisfies the strong Baum-Connes conjecture with coefficients in  $c_0(Y)$ and $D_r^{(\infty)}(\gxp)$, then
\[
K^*((\lambda \circ\varphi)^{-1}\circ\Psi_{\gxp}):K^*(C^*_r(G\rtimes_\theta P^*))\to K^*(\AA[G,P,\theta])
\]
is an isomorphism.
 \end{cor}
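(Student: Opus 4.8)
The plan is to deduce Corollary~\ref{cor-Kgroups-red-semigroup-algebra} directly from Theorem~\ref{thm-Kgroups-red-semigroup-algebra} by specialising to the semigroup $S = \gxp$, which is a right LCM left Ore semigroup with identity by Proposition~\ref{prop: semigp is left Ore} (using that $P$ is assumed left Ore, and that $\gxp$ is right LCM by \cite[Proposition 8.2]{BLS1}). First I would observe that Theorem~\ref{thm-Kgroups-red-semigroup-algebra} applies verbatim, yielding under the Baum--Connes hypothesis that $K_*(\Psi_{\gxp}): K_*(C^*_r((\gxp)^*)) \to K_*(C^*_r(\gxp))$ is an isomorphism. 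Next I would invoke Proposition~\ref{prop: semigp is left Ore} again to identify the group of units $(\gxp)^* \cong G \rtimes_\theta P^*$, so that the source of $K_*(\Psi_{\gxp})$ is $K_*(C^*_r(G \rtimes_\theta P^*))$ after transporting along this (canonical) group isomorphism.

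Then I would assemble the maps. We have the isomorphism $\varphi: \AA[G,P,\theta] \to C^*(\gxp)$ from Theorem~\ref{thm: A is a semigroup algebra}, and the left regular representation $\lambda: C^*(\gxp) \to C^*_r(\gxp)$, which is assumed to be an isomorphism in the hypothesis. Hence $\lambda \circ \varphi: \AA[G,P,\theta] \to C^*_r(\gxp)$ is an isomorphism, so $(\lambda \circ \varphi)^{-1}: C^*_r(\gxp) \to \AA[G,P,\theta]$ is well-defined and induces an isomorphism on $K_*$. Composing, the map
\[
(\lambda \circ \varphi)^{-1} \circ \Psi_{\gxp}: C^*_r(G \rtimes_\theta P^*) \to \AA[G,P,\theta]
\]
induces on $K_*$ the composite $K_*((\lambda\circ\varphi)^{-1}) \circ K_*(\Psi_{\gxp})$, which is an isomorphism since both factors are, by functoriality of $K$-theory and the fact that a $*$-isomorphism induces an isomorphism on $K$-groups. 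This proves the first assertion. For the second assertion, under the additional UCT or strong Baum--Connes hypothesis, Theorem~\ref{thm-Kgroups-red-semigroup-algebra} gives that $K^*(\Psi_{\gxp})$ is an isomorphism in $K$-homology; the same composition argument, using contravariant functoriality of $K^*$ and the fact that $*$-isomorphisms induce isomorphisms in $K$-homology, shows $K^*((\lambda\circ\varphi)^{-1}\circ\Psi_{\gxp})$ is an isomorphism.

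I do not anticipate a serious obstacle: the statement is genuinely a corollary, and the only points requiring a word of care are (a) that $\Psi_{\gxp}$ is not itself an isomorphism of $C^*$-algebras, only a $K$-theory isomorphism, so one must phrase the argument at the level of $K$-groups rather than pretending to compose isomorphisms of algebras throughout; and (b) checking that the various canonical identifications (the isomorphism $(\gxp)^* \cong G \rtimes_\theta P^*$ from Proposition~\ref{prop: semigp is left Ore}, and the interpretation of $\Psi_{\gxp}$ as landing in $C^*_r(\gxp) \cong \AA[G,P,\theta]$) are compatible, which is routine. If anything, the mild subtlety worth flagging explicitly is that the Baum--Connes and UCT hypotheses in Theorem~\ref{thm-Kgroups-red-semigroup-algebra} are hypotheses on $\mathcal{G}(S)$ and on the $\mathcal{G}(S)$-algebras $c_0(Y)$ and $D_r^{(\infty)}(S)$; here $\mathcal{G}(S) = \mathcal{G}(\gxp)$ and these coefficient algebras are the ones attached to $\gxp$, so the hypotheses are simply inherited, and the proof reduces to citing Theorem~\ref{thm-Kgroups-red-semigroup-algebra} and composing with the $K$-theory isomorphism induced by $(\lambda\circ\varphi)^{-1}$.
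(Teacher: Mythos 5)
Your proposal is correct and follows essentially the same route the paper intends: the corollary is obtained by applying Theorem~\ref{thm-Kgroups-red-semigroup-algebra} to $S=\gxp$ (which is right LCM and left Ore with $(\gxp)^*\cong G\rtimes_\theta P^*$ by Proposition~\ref{prop: semigp is left Ore}) and then composing $\Psi_{\gxp}$ with the isomorphism $(\lambda\circ\varphi)^{-1}$ coming from Theorem~\ref{thm: A is a semigroup algebra} and the hypothesis on $\lambda$, using functoriality of $K_*$ and $K^*$. Your cautionary remarks (that $\Psi_{\gxp}$ is only a $K$-theoretic isomorphism, and that the Baum--Connes/UCT hypotheses are exactly those for $\mathcal{G}(\gxp)$ with the stated coefficients) are precisely the points that make the deduction immediate.
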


\noindent Corollary~\ref{cor-Kgroups-red-semigroup-algebra} applies whenever $\mathcal{G}(\gxp)$ satisfies the strong Baum-Connes conjecture with commutative coefficients, in particular it applies when $\mathcal{G}(\gxp)$ is amenable. To check amenability, one possible strategy is as follows: by \cite[Proposition 6.1.3]{CEL2}, there is an embedding of $\gxp$  into a semidirect product  $G^{(\infty)} \rtimes_{\theta^{(\infty)}} P^{-1}P$, where $G^{(\infty)}$ is the inductive limit of $(G,\theta_p)$ over $P$.  The universal property of $\mathcal{G}(\gxp)$ gives an injective group homomorphism $\mathcal{G}(\gxp)\hookrightarrow G^{(\infty)} \rtimes_{\theta^{(\infty)}} P^{-1}P$. Thus amenability of $\mathcal{G}(\gxp)$ will follow from that of $G^{(\infty)} \rtimes_{\theta^{(\infty)}} P^{-1}P$. For instance, the latter is amenable whenever $G$ and $P^{-1}P$ are amenable.

\section{Nica-Toeplitz algebras of product systems over right LCM semigroups}\label{sec: right LCM product systems}

\noindent Our next aim is to describe $\AA[G,P,\theta]$ as a $C^*$-algebra associated to a
product system of right-Hilbert $C^*(G)$-bimodules. As the natural product system to consider
is fibred over the right LCM semigroup $P$, we need to extend the existing
theory to build Nica-Toeplitz algebras for such product systems. In this
section we define the notions of a compactly aligned product system over a
right LCM semigroup, and of a Nica covariant representation of such a product
system. We use Parseval frames to find a condition under which a product
system over a right LCM semigroup is compactly aligned,
and we show that the Fock representation is Nica covariant.

Let $A$ be a $C^*$-algebra and $S$ a (countable, discrete) left cancellative
semigroup with identity $1$. Recall from \cite{Fow2} that a {\em
product system} of right-Hilbert $A$-bimodules over $S$ is a semigroup
$M=\bigsqcup_{p\in S}M_p$, where each $M_p$ is a right-Hilbert $A$-bimodule;
$x\otimes_A y\mapsto xy$ determines an isomorphism of $M_p\otimes_A M_q$ onto
$M_{pq}$ for all $p,q\in S$ such that $p\neq 1$; the bimodule $M_1$ is
${}_AA_A$; and the products from $M_1\times M_p\to M_p$ and $M_p\times M_1 \to
M_p$ are given by the module actions of $A$ on $M_p$.

Recall that for each $p\in S$ and each $\xi,\eta\in M_p$ the operator
$\Theta_{\xi, \eta}:M_p\to M_p$ defined by $\Theta_{\xi,\eta}(\mu):=
\xi\cdot{\langle \eta,\mu\rangle}_p$ is adjointable with
$\Theta_{\xi,\eta}^*=\Theta_{\eta,\xi}$. The space
$\KK(M_p):=\overline{\newspan}\{\Theta_{\xi,\eta} \mid \xi,\eta\in M_p\}$ is a
closed two-sided ideal in $\LL(M_p)$ called the {\em algebra of generalised compact
operators} on $M_p$. For $p,q\in S$, there is a homomorphism
$\iota_p^{pq}:\LL(M_p)\rightarrow\LL(M_{pq})$ characterised by
\begin{equation}\label{eq: char of iota map}
\iota_p^{pq}(T)(\xi\eta)=(T\xi)\eta\quad\text{for all $\xi\in M_p,\eta\in
M_q$.}
\end{equation}
In other words, $\iota_p^{pq}(T)= T \otimes \id_{M_q}$.
For $q \notin pS$ we set $\iota_p^q = 0$.

A {\em representation} $\psi$ of $M$ in a $C^*$-algebra $B$ is a map
$M\to B$ such that
\begin{enumerate}[(1)]
\item each $\psi_p:=\psi|_{M_p}:M_p\to B$ is linear,
$\psi_1:A\to B$ is a homomorphism,
\item $\psi_p(\xi)\psi_q(\eta)=\psi_{pq}(\xi\eta)$ for all $p,q\in S$,
$\xi\in M_p$, and $\eta\in M_q$; and
\item $\psi_p(\xi)^*\psi_p(\eta)=\psi_1({\langle \xi,\eta\rangle}_p)$
for all $p\in S$, and $\xi,\eta\in M_p$.
\end{enumerate}
For each $p\in S$ there is a homomorphism $\psi^{(p)}:\KK(M_p)\to B$
characterised by
\[\psi^{(p)}(\Theta_{\xi,\eta})=\psi_p(\xi)\psi_p(\eta)^*.\]

In \cite[Definition~5.7]{Fow2} Fowler introduced the notion of a compactly
aligned product system for quasi-lattice ordered pairs. We now extend
this notion to product systems over right LCM semigroups.

\begin{definition}\label{def:compactly aligned for right LCM}
A product system $M$ over a right LCM semigroup $S$ is called \emph{compactly
aligned} if for all $p,q,r \in S$ such that $pS \cap qS = rS$ and all $T_p \in
\KK(M_p), T_q \in \KK(M_q)$ we have $\iota_p^r(T_p)\iota_q^r(T_q) \in \KK(M_r)$.
\end{definition}

\begin{remark}\label{rem: cp aligned condition}
Recall that if $r$ is a right least common multiple of $p$ and $q$, then all right least common multiples of $p$ and $q$ are of the form $rx$ for $x\in S^*$. Note that $M_x$ is a Morita equivalence from $A$ to $A$ for every $x \in S^*$ because $M_x \otimes_A M_{x^{-1}} \cong {}_AA_A \cong M_{x^{-1}} \otimes_A M_x$. Hence $T \mapsto T \otimes \text{id}$ defines an adjunction between $M_x$ and $M_{x^{-1}}$ in the sense of \cite[Definition 2.17]{CCH}. Combining this with \cite[Lemma 3.7]{CCH}, we deduce that the natural isomorphism $\iota_r^{rx}: \LL(M_r) \to \LL(M_{rx})$ restricts to an isomorphism from $\KK(M_r)$ to $\KK(M_{rx})$. Thus, $\iota_p^r(T_p)\iota_q^r(T_q)$ belongs to $\KK(M_r)$ if and only if $\iota_p^{rx}(T_p)\iota_q^{rx}(T_q) \in \KK(M_{rx})$ for all $x \in S^*$.
\end{remark}

\noindent We will shortly present a class of compactly aligned product systems. Before doing so
we recall that a \emph{Parseval frame} for a right-Hilbert module $Y$ over a unital $C^*$-algebra $A$ is
a countable family $(\xi_i)_{i \in I} \subset Y$ that satisfies the \emph{reconstruction formula}
$\eta=\sum_{i \in I} \xi_i \cdot \langle \xi_i,\eta \rangle$ for all $\eta \in Y$, where the convergence is in norm in $Y$. Parseval frames are also referred to as standard normalised tight frames, which are the main objects of interest in  \cite{Frank-Larson}. For our purposes the reconstruction formula is more relevant than the equivalent Parseval-type identity in $A$ that defines a standard normalised tight frame.

\begin{lemma}\label{lem:cp aligned from cp aligned Par frames}
Let $A$ be a unital $C^*$-algebra, $S$ a right LCM semigroup with identity, and
$M=\bigsqcup_p M_p$ a product system of right-Hilbert $A$-bimodules
such that the following condition is satisfied:
\begin{enumerate}
\item[(CA)] Whenever $p,q,r \in S$ satisfy $pS \cap qS = rS$, there are
Parseval frames $(\xi_i^{(p)})_{i \in I_p}$ for $M_p$ and $(\xi_i^{(q)})_{i
\in I_q}$ for $M_q$, such that
$\iota_p^r(\Theta_{\xi_i^{(p)},\xi_i^{(p)}})\iota_q^r(\Theta_{\xi_j^{(q)},
\xi_j^{(q)}})$  is in $\KK(M_r)$  for all $i \in I_p, j \in
I_q$.
\end{enumerate}
Then $M$ is compactly aligned.
\end{lemma}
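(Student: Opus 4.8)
The plan is to reduce the general statement about arbitrary compact operators $T_p \in \KK(M_p)$, $T_q \in \KK(M_q)$ to the special case of the rank-one operators $\Theta_{\xi_i^{(p)},\xi_i^{(p)}}$ and $\Theta_{\xi_j^{(q)},\xi_j^{(q)}}$ appearing in condition (CA), using the reconstruction formula to expand everything in terms of the given Parseval frames. First I would record the elementary identity $\Theta_{\xi,\eta} = \sum_{i} \Theta_{\xi,\xi_i^{(p)}}\Theta_{\xi_i^{(p)},\eta}$ for any $\xi,\eta \in M_p$, which follows from the reconstruction formula $\eta = \sum_i \xi_i^{(p)}\langle \xi_i^{(p)},\eta\rangle$ together with $\Theta_{\xi,\xi_i^{(p)}}\Theta_{\xi_i^{(p)},\eta} = \Theta_{\xi \cdot \langle \xi_i^{(p)},\xi_i^{(p)}\rangle,\eta}$... actually more directly $\Theta_{\xi_i^{(p)},\eta}^*\!$ — let me just use that $\Theta_{\xi,\eta}(\mu) = \xi\langle\eta,\mu\rangle = \sum_i \xi \langle \eta, \xi_i^{(p)}\rangle \langle \xi_i^{(p)},\mu\rangle$, so $\Theta_{\xi,\eta} = \sum_i \Theta_{\xi\langle\eta,\xi_i^{(p)}\rangle,\xi_i^{(p)}}$, with the sum converging in operator norm on $\KK(M_p)$ (a standard fact about Parseval frames).

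Next, since $\KK(M_p) = \clsp\{\Theta_{\xi,\eta} : \xi,\eta \in M_p\}$ and $\iota_p^r$, $\iota_q^r$ are bounded, and since $\KK(M_r)$ is a closed subspace of $\LL(M_r)$, it suffices to prove $\iota_p^r(T_p)\iota_q^r(T_q) \in \KK(M_r)$ when $T_p = \Theta_{\xi,\eta}$ and $T_q = \Theta_{\zeta,\omega}$ are rank-one. Applying the expansion above twice, I would write $\Theta_{\xi,\eta} = \sum_i \Theta_{\xi\langle\eta,\xi_i^{(p)}\rangle,\xi_i^{(p)}}$ and similarly for $\Theta_{\zeta,\omega}$ in terms of the frame $(\xi_j^{(q)})_{j}$. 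Then I would use the multiplicativity $\iota_p^r(\Theta_{\alpha,\beta}) = \iota_p^r(\Theta_{\alpha,\xi_i^{(p)}})\,\iota_p^r(\Theta_{\xi_i^{(p)},\beta})$ — more carefully, I want to peel off so that the inner factors are exactly $\iota_p^r(\Theta_{\xi_i^{(p)},\xi_i^{(p)}})$ and $\iota_q^r(\Theta_{\xi_j^{(q)},\xi_j^{(q)}})$. The clean way: $\iota_p^r(\Theta_{\xi,\eta}) = \sum_{i} \iota_p^r(\Theta_{\xi,\xi_i^{(p)}})\iota_p^r(\Theta_{\xi_i^{(p)},\xi_i^{(p)}})$ since $\Theta_{\xi,\eta}=\sum_i \Theta_{\xi,\xi_i^{(p)}}\Theta_{\xi_i^{(p)},\eta}$ and $\Theta_{\xi_i^{(p)},\eta}=\Theta_{\xi_i^{(p)},\xi_i^{(p)}}\cdot(\text{something})$... hmm, that last step is not automatic. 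Let me instead expand symmetrically from both sides: $\Theta_{\xi,\eta} = \sum_{i,i'} \Theta_{\xi,\xi_i^{(p)}}\Theta_{\xi_i^{(p)},\xi_{i'}^{(p)}}\Theta_{\xi_{i'}^{(p)},\eta}$, which still doesn't give diagonal frame terms. The honest approach is: write $T_p = \sum_i \Theta_{\xi_i^{(p)},\xi_i^{(p)}}^{1/2} a_i (\ldots)$ — no. I will instead simply use $\KK(M_p) = \overline{\Span}\{\Theta_{\xi_i^{(p)},\xi_j^{(p)}} : i,j \in I_p\}$ (which holds because these span a dense subset by the reconstruction formula) and then reduce to showing $\iota_p^r(\Theta_{\xi_i^{(p)},\xi_j^{(p)}})\iota_q^r(\Theta_{\xi_k^{(q)},\xi_l^{(q)}}) \in \KK(M_r)$; and $\Theta_{\xi_i^{(p)},\xi_j^{(p)}} = \Theta_{\xi_i^{(p)},\xi_i^{(p)}}\Theta_{\xi_i^{(p)},\xi_j^{(p)}}$ does hold since $\Theta_{\xi_i^{(p)},\xi_i^{(p)}}\Theta_{\xi_i^{(p)},\xi_j^{(p)}} = \Theta_{\xi_i^{(p)}\langle\xi_i^{(p)},\xi_i^{(p)}\rangle,\xi_j^{(p)}}$ — again not the identity.

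So the real mechanism, and the step I expect to be the main obstacle, is getting the diagonal frame operators $\Theta_{\xi_i^{(p)},\xi_i^{(p)}}$ to act as the ``middle'' of the product. The resolution is to observe that $\sum_{i \in I_p}\Theta_{\xi_i^{(p)},\xi_i^{(p)}}$ converges strictly to $\id_{M_p}$ (this is the operator form of the reconstruction formula), hence $\iota_p^r\bigl(\sum_i \Theta_{\xi_i^{(p)},\xi_i^{(p)}}\bigr)$ converges strictly to $\iota_p^r(\id_{M_p}) = \id_{M_r}$ in $\LL(M_r)$. Therefore, for $T_p \in \KK(M_p), T_q \in \KK(M_q)$,
\[
\iota_p^r(T_p)\iota_q^r(T_q) = \Bigl(\sum_{i}\iota_p^r(T_p)\,\iota_p^r(\Theta_{\xi_i^{(p)},\xi_i^{(p)}})\Bigr)\Bigl(\sum_{j}\iota_q^r(\Theta_{\xi_j^{(q)},\xi_j^{(q)}})\,\iota_q^r(T_q)\Bigr),
\]
where the sums converge in norm because $T_p \in \KK(M_p)$ means $T_p = T_p\cdot\lim_F \sum_{i\in F}\Theta_{\xi_i^{(p)},\xi_i^{(p)}}$ in norm (compacts are a norm-closed ideal, and the approximate unit $\sum_{i\in F}\Theta_{\xi_i^{(p)},\xi_i^{(p)}}$ acts as a norm-one approximate identity on $\KK(M_p)$ — here one needs the standard fact that the partial sums over finite $F$ form an approximate unit for $\KK(M_p)$, which follows from the Parseval-frame property). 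Expanding the product and using $\iota_p^r(T_p)\iota_p^r(\Theta_{\xi_i^{(p)},\xi_i^{(p)}})\,\cdot\,\iota_q^r(\Theta_{\xi_j^{(q)},\xi_j^{(q)}})\iota_q^r(T_q) = \iota_p^r(T_p)\bigl[\iota_p^r(\Theta_{\xi_i^{(p)},\xi_i^{(p)}})\iota_q^r(\Theta_{\xi_j^{(q)},\xi_j^{(q)}})\bigr]\iota_q^r(T_q)$, each bracketed factor lies in $\KK(M_r)$ by (CA), and $\KK(M_r)$ is an ideal in $\LL(M_r)$, so each summand lies in $\KK(M_r)$; taking the norm limit and using that $\KK(M_r)$ is closed finishes the proof. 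Finally, I would note via Remark~\ref{rem: cp aligned condition} that it is enough to verify the membership for one choice of right LCM $r$ of $p$ and $q$, so no ambiguity arises. The delicate points to write carefully are the norm-convergence of these frame expansions (citing the standard theory of Parseval frames / the fact that finite partial sums of $\Theta_{\xi_i,\xi_i}$ form an approximate unit for $\KK(M_p)$) and the strict convergence $\sum_i \iota_p^r(\Theta_{\xi_i^{(p)},\xi_i^{(p)}}) \to \id_{M_r}$.
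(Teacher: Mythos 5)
Your final argument is correct and is essentially the paper's proof: you insert the diagonal frame projections $\iota_p^r(\Theta_{\xi_i^{(p)},\xi_i^{(p)}})\iota_q^r(\Theta_{\xi_j^{(q)},\xi_j^{(q)}})$ between the two factors, apply (CA) to each summand, use that $\KK(M_r)$ is a norm-closed ideal in $\LL(M_r)$, and pass to the norm limit. The only cosmetic difference is that the paper first reduces to rank-one $T_p,T_q$ by linearity and continuity and verifies the norm convergence of the frame expansion directly from the reconstruction formula together with $\|\Theta_{\xi,\eta}\|\leq\|\xi\|\,\|\eta\|$, whereas you invoke the equivalent standard fact that the finite partial sums $\sum_{i\in F}\Theta_{\xi_i^{(p)},\xi_i^{(p)}}$ form an approximate unit for $\KK(M_p)$ (which indeed follows from that same computation plus the bound $\sum_{i\in F}\Theta_{\xi_i^{(p)},\xi_i^{(p)}}\leq\operatorname{id}$ coming from the Parseval identity).
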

\begin{proof}
Let $p,q,r \in S$ satisfy $pS \cap qS = rS$. By linearity and continuity it suffices to show $\iota_p^r(\Theta_{\eta_1,\eta_2})\iota_q^r(\Theta_{\zeta_1,\zeta_2}) \in \KK(M_r)$ for all $\eta_1,\eta_2 \in M_p, \zeta_1,\zeta_2 \in M_q$. We claim that, due to the reconstruction formula, we have
\[\begin{array}{rcl}
\sum\limits_{i \in F}  \Theta_{\eta_1,\eta_2}\Theta_{\xi_i^{(p)},\xi_i^{(p)}}
&\stackrel{F \subset I_p \text{ finite}}{\longrightarrow}&
\Theta_{\eta_1,\eta_2},\text{ and}\\
\sum\limits_{j \in F'}
\Theta_{\xi_j^{(q)},\xi_j^{(q)}}\Theta_{\zeta_1,\zeta_2} &\stackrel{F' \subset
I_q \text{ finite}}{\longrightarrow}&  \Theta_{\zeta_1,\zeta_2},
\end{array}\]
with convergence in norm in $\KK(M_r)$. Firstly, note that it suffices to consider the second case, where the rank one operators corresponding to the elements from the Parseval frame are to the left, because the involution on $\LL(M_p)$ is continuous. We start by observing that
\[\begin{array}{lcl}
\sum\limits_{j \in F'} \Theta_{\xi_j^{(q)},\xi_j^{(q)}}\Theta_{\zeta_1,\zeta_2} -\Theta_{\zeta_1,\zeta_2}
&=& \Theta_{(\sum\limits_{j \in F'}\xi_j^{(q)}.\langle \xi_j^{(q)},\zeta_1 \rangle)-\zeta_1,\zeta_2}.
\end{array}\]
It is routine to see that $\|\Theta_{\xi_1,\xi_2}\| \leq \|\xi_1\|\cdot\|\xi_2\|$ holds for all $\xi_1, \xi_2\in M$, where $M$ is a right-Hilbert module over $A$. Using this observation, we obtain
\[\begin{array}{lcl}
\|\sum\limits_{j \in F'} \Theta_{\xi_j^{(q)},\xi_j^{(q)}}\Theta_{\zeta_1,\zeta_2} -\Theta_{\zeta_1,\zeta_2}\|
&=& \|\Theta_{(\sum\limits_{j \in F'}\xi_j^{(q)}.\langle \xi_j^{(q)},\zeta_1 \rangle)-\zeta_1,\zeta_2}\|\\
&\leq& \|\sum\limits_{j \in F'}\xi_j^{(q)}.\langle \xi_j^{(q)},\zeta_1 \rangle-\zeta_1\| \|\zeta_2\|.
\end{array}\]
Therefore $\sum_{j \in F'}\Theta_{\xi_j^{(q)},\xi_j^{(q)}}\Theta_{\zeta_1,\zeta_2}$ converges in norm to $\Theta_{\zeta_1,\zeta_2}$ as $F' \nearrow I_q$ and, likewise, $\sum_{i \in F}  \Theta_{\eta_1,\eta_2}\Theta_{\xi_i^{(p)},\xi_i^{(p)}}$ converges in norm to $\Theta_{\eta_1,\eta_2}$ as $F \nearrow I_p$.
From this we deduce
\[\begin{array}{c}
\sum\limits_{(i,j) \in F\times F'}
\iota_p^r(\Theta_{\eta_1,\eta_2})\iota_p^r(\Theta_{\xi_i^{(p)},\xi_i^{(p)}})\iota_q^r(\Theta_{\xi_j^{(q)},\xi_j^{(q)}})\iota_q^r(\Theta_{\zeta_1,\zeta_2})
 \stackrel{F\times F' \subset I_p \times I_q \text{ finite}}{\longrightarrow}
\iota_p^r(\Theta_{\eta_1,\eta_2})\iota_q^r(\Theta_{\zeta_1,\zeta_2}),
\end{array}\]
where, again, we have convergence with respect to the operator norm on $\LL(M_r)$. By condition (CA), the operator
\[\begin{array}{c} \sum\limits_{(i,j) \in F\times F'}
\iota_p^r(\Theta_{\xi_i^{(p)},\xi_i^{(p)}})\iota_q^r(\Theta_{\xi_j^{(q)},\xi_j^{(q)}})
 \end{array}\]
is in $\KK(M_r)$ for each  finite set $F\times F' \subset I_p \times I_q$. Since $\KK(M_r)$ is a norm closed ideal in $\LL(M_r)$, the result follows.
\end{proof}

\noindent To associate a $C^*$-algebra to product systems over right LCM semigroups we
now introduce the notion of Nica covariant representations of such product
systems, which is an extension of Fowler's Nica covariance
\cite[Definition~5.7]{Fow2} for
product systems of quasi-lattice ordered pairs. In the following, we let $\varphi_p: A\to  \LL(M_p)$ denote the $*$-homomorphism that implements the left action in $M_p$.

\begin{definition}\label{def:Nica covariance for right LCM}
A representation $\psi:M\to B$ of a compactly aligned product system $M$ over
a right
LCM semigroup $S$ in a $C^*$-algebra $B$ is called \emph{Nica covariant} if
for all $p,q \in S$ and
$T_p \in \KK(M_p), T_q \in \KK(M_q)$ we have
\begin{equation}\label{eq:new-Nica-cov}
\psi^{(p)}(T_p)\psi^{(q)}(T_q)=
\begin{cases}
\psi^{(r)}\big(\iota_p^r(T_p)\iota_q^r(T_q)\big) &
\text{if $pS \cap qS = rS$ for some $r \in S$,}\\
0 & \text{otherwise.}
\end{cases}
\end{equation}
\end{definition}

\begin{remark}\label{rem: Nica cov condition} An explanation is in order about this definition. Suppose we have $p,q\in S$ such that $pS \cap qS = rS$ for some $r\in S$. Let $T_p\in \KK(M_p)$ and $T_q\in \KK(M_q)$. Let $x \in S^*$, so that $rx$ is another right LCM for $p$ and $q$. Then a necessary condition for a representation $\psi:M\to B$ to be Nica covariant is that
\begin{equation}\label{eq:sufficient-condition-covariance-any-rx}
\psi^{(r)}\big(\iota_p^r(T_p)\iota_q^r(T_q)\big)=\psi^{(rx)}\big(\iota_p^{rx}(T_p)
\iota_q^{rx}(T_q)\big).
\end{equation}
Since $\iota_p^{rx}=\iota_r^{rx}\circ \iota_p^r$ and similarly for $\iota_q^{rx}$, and since $\iota_r^{rx}$ is a homomorphism on $\LL(M_r)$, we have
$$
\iota_p^{rx}(T_p)\iota_q^{rx}(T_q)=\iota_r^{rx}(\iota_p^r(T_p)\iota_q^r(T_q)).
$$
 Using this, \eqref{eq:sufficient-condition-covariance-any-rx} will follow if  $\iota_r^{rx}(\iota_p^r(T_p)\iota_q^r(T_q))\in \KK(M_{rx})$ and $\psi^{(rx)} \circ \iota_r^{rx} = \psi^{(r)}$. The first requirement is always satisfied because  by Remark~\ref{rem: cp aligned condition}, the homomorphism  $\iota_r^{rx}$  restricts to an isomorphism from $\KK(M_r)$ onto $\KK(M_{rx})$ and $\iota_p^r(T_p)\iota_q^r(T_q)\in \KK(M_r)$ by compact alignment. In the next result we present a class of product systems for which the requirement  $\psi^{(rx)} \circ \iota_r^{rx} = \psi^{(r)}$ holds for every representation $\psi$ of $M$.
\end{remark}

\begin{lemma}\label{lem: suff conds for comm diagrams}
Suppose $A$ is a unital $C^*$-algebra, $S$ is a right LCM semigroup, and $M$
is a product system of right-Hilbert $A$-bimodules over $S$. If for each $x\in S^*$ there
is an element $1_x\in M_x$ satisfying $\Theta_{1_x,1_x}=\id_{M_x}$,
then $\psi^{(rx)} \circ \iota_r^{rx} = \psi^{(r)}$ holds for every  representation $\psi$ of $M$ and all $r\in S$.
\end{lemma}
\begin{proof}
We claim that for each $r\in S$, $x\in S^*$ and $\mu,\nu\in M_r$ we have
$\iota_r^{rx}(\Theta_{\mu,\nu})=\Theta_{\mu1_x,\nu1_x}\in\KK(M_{rx})$. To see this,
let $\xi\eta\in M_{rx}$ with $\xi\in M_r$ and $\eta\in M_x$. We have
\begin{align*}
\Theta_{\mu1_x,\nu1_x}(\xi\eta) &= (\mu 1_x)\cdot \langle
\nu1_x,\xi\eta\rangle_{rx} = (\mu 1_x)\cdot \langle 1_x,\varphi_x(\langle
\nu,\xi\rangle_r)\eta\rangle_x\\
&=
\mu\big(\Theta_{1_x,1_x}(\varphi_x(\langle \nu,\xi\rangle_r)\eta)\big)\\
&= (\mu\cdot \langle \nu,\xi\rangle_r)\eta=(\Theta_{\mu,\nu}(\xi))\eta\\
&= \iota_r^{rx}(\Theta_{\mu,\nu})(\xi\eta).
\end{align*}
Then the desired conclusion follows by linearity and continuity from
\begin{align*}
\psi^{(rx)} \circ \iota_r^{rx}(\Theta_{\mu,\nu})
&=\psi^{(rx)}(\Theta_{\mu1_x,\nu1_x})=\psi_r(\mu)\psi_x(1_x)\psi_x(1_x)^*\psi_r(\nu)^*\\
&=\psi_r(\mu)\psi^{(x)}(\Theta_{1_x,1_x})\psi_r(\nu)^*\\
&=\psi_r(\mu)\psi^{(x)}(\id_{M_x})\psi_r(\nu)^*\\
&=\psi^{(r)}(\Theta_{\mu,\nu}).
\end{align*}
\end{proof}

\begin{example}
Suppose that $M$ is a product system of right-Hilbert $A$-bimodules over a right LCM
semigroup $S$. Suppose that we have a collection of automorphisms
$\{\alpha_x:x\in
S^*\}$ such that each $M_x$ is given by ${}_AA_{\alpha_x}$ with inner product
$\langle a,b\rangle_x=\alpha_x^{-1}(a^*b)$. Then the elements $1_x:=1\in A$
satisfy the hypothesis of Lemma~\ref{lem: suff conds for comm diagrams}. The
product systems we form in the next section fall in this class of examples.
\end{example}

\noindent It was observed \cite{Fow2} that a  natural representation for product systems $M$ is the \emph{Fock representation}. The details of the construction are as follows: The vector space
\[\begin{array}{c}
F(M) = \big\{ (\xi_p)_{p \in S} \mid \xi_p \in M_p, \sum\limits_{p \in S} \|\xi_p\|_p^2 < \infty\big\}
\end{array}\]
becomes a right-Hilbert $A$-bimodule when equipped with the inner product
\[\begin{array}{c}
\langle (\xi_p)_{p \in S},(\eta_p)_{p \in S}\rangle = \sum\limits_{p \in S}\langle
\xi_p,\eta_p\rangle_p,
\end{array}\]
together with left and right actions
\[\varphi_{F(M)}(a)(\xi_p)_{p \in S} = (\varphi_p(a)\xi_p)_{p \in S} \text{ and } (\xi_p)_{p \in S}\cdot a =
(\xi_p\cdot a)_{p \in S}.\]
It is routine to check that the map $\psi_F:M \to \mathcal{L}(F(M))$ given by
\[\psi_{F,p}(\xi) (\eta_q)_{q \in S} =(\chi_{pS}(q) \cdot
\xi\eta_{r})_{q\in S} \quad \text{for } \xi \in M_p\]
defines a representation of $M$, where $\chi_{pS}$ denotes the characteristic
function on $pS$ and $q=pr$. Note that $M_p \otimes_A
M_q \cong M_{pq}$ allows us to write
$\eta_{pq} = \sum_{i \in I_{pq}}\eta_{pq,i}^{'} \otimes \eta_{pq,i}^{''}$
for suitable $\eta_{pq,i}^{'} \in M_p, \eta_{pq,i}^{''} \in M_q$ (interpreted as a norm limit of finite sums if $I_{pq}$ is infinite) and we have
\[\begin{array}{c} \psi_{F,p}(\xi)^* (\eta_{pq}) = \sum\limits_{i \in I_{pq}}\varphi_{q}(\langle
\xi,\eta_{pq,i}^{'}\rangle_p)\eta_{pq,i}^{''}. \end{array}\]

\begin{prop}\label{prop:Fock rep Nica cov}
If $M$ is a compactly aligned product system over a right LCM semigroup, then the Fock representation $\psi_F$ is Nica covariant.
\end{prop}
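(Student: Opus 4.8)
The plan is to verify the two cases of the Nica covariance identity \eqref{eq:new-Nica-cov} directly for the Fock representation $\psi_F$, using the explicit formulas for $\psi_{F,p}(\xi)$ and its adjoint together with the description of $\psi_F^{(p)}$ on generalised compacts. First I would record that, since $M$ is compactly aligned, the operator $\iota_p^r(T_p)\iota_q^r(T_q)$ appearing on the right-hand side genuinely lies in $\KK(M_r)$, so the expression $\psi_F^{(r)}\big(\iota_p^r(T_p)\iota_q^r(T_q)\big)$ makes sense; by linearity and norm-continuity of all maps involved it then suffices to check the identity when $T_p = \Theta_{\xi_1,\xi_2}$ with $\xi_1,\xi_2 \in M_p$ and $T_q = \Theta_{\zeta_1,\zeta_2}$ with $\zeta_1,\zeta_2 \in M_q$, in which case $\psi_F^{(p)}(T_p) = \psi_{F,p}(\xi_1)\psi_{F,p}(\xi_2)^*$ and similarly for $q$.

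The heart of the argument is a computation of how $\psi_{F,p}(\xi_1)\psi_{F,p}(\xi_2)^*\psi_{F,q}(\zeta_1)\psi_{F,q}(\zeta_2)^*$ acts on a vector $(\eta_s)_{s \in S} \in F(M)$. I would evaluate $\psi_{F,q}(\zeta_2)^*$ first, then apply $\psi_{F,q}(\zeta_1)$, obtaining a vector supported on $qS$; then apply $\psi_{F,p}(\xi_2)^*$, which produces something supported on the preimage under left-multiplication-by-$p$ of $qS \cap pS$; and finally apply $\psi_{F,p}(\xi_1)$. The key structural point is that the composite $\psi_{F,p}(\xi_2)^*\psi_{F,q}(\zeta_1)$ "sees" only the fibres indexed by $pS \cap qS$: if $pS \cap qS = \emptyset$ the composite is zero, and if $pS \cap qS = rS$ the relevant decomposition $\eta_{s} = \sum_i \eta'_{s,i}\otimes\eta''_{s,i}$ can be taken along the factorisation $M_r \cong M_p \otimes_A M_{p^{-1}r} \cong M_q \otimes_A M_{q^{-1}r}$, and one reads off that the resulting operator is exactly $\psi_F^{(r)}$ applied to $\iota_p^r(\Theta_{\xi_1,\xi_2})\iota_q^r(\Theta_{\zeta_1,\zeta_2})$ — that is, $(\Theta_{\xi_1,\xi_2}\otimes\id_{M_{p^{-1}r}})(\Theta_{\zeta_1,\zeta_2}\otimes\id_{M_{q^{-1}r}})$ acting on the $r$-fibre in the canonical way. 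In the case $pS \cap qS = \emptyset$, one checks directly from the formula for the adjoint that $\psi_{F,p}(\xi_2)^*$ followed by $\psi_{F,q}(\zeta_1)$ annihilates every vector, giving the zero on the right-hand side of \eqref{eq:new-Nica-cov}.

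I expect the main obstacle to be purely bookkeeping: keeping the internal tensor-product decompositions $\eta_s = \sum_i \eta'_{s,i}\otimes\eta''_{s,i}$ consistent when one and the same fibre $M_r$ is viewed simultaneously as $M_p\otimes_A M_{p^{-1}r}$ and as $M_q\otimes_A M_{q^{-1}r}$, and checking that the partial-isometry identities for $\psi_F$ (namely $\psi_{F,p}(\xi)^*\psi_{F,p}(\eta) = \psi_{F,1}(\langle\xi,\eta\rangle_p)$ from property~(3) of a representation) propagate correctly through the four-fold product. Once one observes that $\psi_{F,p}(\xi_2)^*(\eta_s)_s$ is supported precisely on those $s$ with $ps' = $ something in $pS$, i.e. all of $S$ shifted into $pS$, and intersecting with the support of $\psi_{F,q}(\zeta_1)(\cdot)$ forces the index to lie over $pS\cap qS$, the two cases fall out. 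As a final remark, one may instead invoke Remark~\ref{rem: Nica cov condition} to note the right-hand side is independent of the choice of right LCM $r$, so the identity is well posed, and then reduce via Lemma~\ref{lem:cp aligned from cp aligned Par frames} if a frame-based computation is preferred; but the direct fibrewise computation above is cleaner.
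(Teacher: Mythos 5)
Your proposal is correct and follows essentially the same route as the paper: reduce by linearity and continuity to rank-one operators, dispose of the case $pS\cap qS=\emptyset$ by letting the operators act on Fock space vectors, and in the case $pS\cap qS=rS$ use compact alignment to make sense of the right-hand side and verify the identity by the direct fibrewise computation on $F(M)$ (which is precisely the "routine check" the paper leaves implicit, adapted from the quasi-lattice ordered case). The only difference is presentational: the paper verifies the identity when $\iota_p^r(T_p)\iota_q^r(T_q)$ is itself a rank-one operator and then extends by linearity and continuity, whereas you identify both sides fibre by fibre at once; both amount to the same calculation.
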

\begin{proof}
The following is a straightforward adaptation of the proof for the
quasi-lattice ordered case from \cite[Subsection 2.3]{HLS}. Let $p,q \in S$
and $\xi,\eta \in M_p, \xi',\eta' \in M_q$. If $pS \cap qS = \emptyset$, then
$\psi_F^{(p)}(\Theta_{\xi,\eta})\psi_F^{(q)}(\Theta_{\xi',\eta'}) (\zeta_r)_{r
\in S} = 0$ for all $(\zeta_r)_{r \in S} \in F(M)$, so
$\psi_F^{(p)}(\Theta_{\xi,\eta})\psi_F^{(q)}(\Theta_{\xi',\eta'}) = 0$. So let
$r \in S$ satisfy $pS \cap qS = rS$. Since $M$ is compactly aligned we know
that $\iota_p^r(\Theta_{\xi,\eta})\iota_q^r(\Theta_{\xi',\eta'}) \in
\KK(M_r)$. If  $\iota_p^r(\Theta_{\xi,\eta})\iota_q^r(\Theta_{\xi',\eta'}) =
\Theta_{\tilde{\xi},\tilde{\eta}}$ for some $\tilde{\xi},\tilde{\eta} \in
M_r$, then it is routine to check
\[\psi_F^{(p)}(\Theta_{\xi,\eta})\psi_F^{(q)}(\Theta_{\xi',\eta'}) =
\psi_F^{(r)}(\Theta_{\tilde{\xi},\tilde{\eta}}).\]
By linearity and continuity, the required calculations extend to the general case of
$\iota_p^r(\Theta_{\xi,\eta})\iota_q^r(\Theta_{\xi',\eta'}) \in \KK(M_r)$. In
other words, we have
\[\psi_F^{(p)}(\Theta_{\xi,\eta})\psi_F^{(q)}(\Theta_{\xi',\eta'}) =
\psi_F^{(r)}(\iota_p^r(\Theta_{\xi,\eta})\iota_q^r(\Theta_{\xi',\eta'})),\]
so $\psi_F$ is Nica covariant.
\end{proof}

\begin{definition}\label{def:NT-alg for cp al over right LCM}
For a compactly aligned product system $M$ over a right LCM semigroup $S$, the
Nica-Toeplitz algebra $\mathcal{NT}(M)$ is the universal $C^*$-algebra
generated by a
Nica covariant representation of $M$. We denote the Nica covariant
representation generating $\NT(M)$ by $j_M$.
\end{definition}

\noindent Note that $\mathcal{NT}(M)$ is always nonzero due to
Proposition~\ref{prop:Fock rep Nica cov}.

\section{Nica-Toeplitz algebras for algebraic dynamical systems}\label{sec: product system for ADS}

\noindent In this section we describe the $C^*$-algebra $\AA[G,P,\theta]$ as the
Nica-Toeplitz algebra of a product system of right-Hilbert $C^*(G)$-bimodules over
the right LCM semigroup $P$. Throughout this section, let $(G,P,\theta)$ be an
algebraic dynamical system. We denote the canonical generating unitaries in
$C^*(G)$ by $\delta_g$, $g\in G$. The action $\theta$ induces an action $\alpha:P\to \End
C^*(G)$, i.e. $\alpha_p(\delta_g)=\delta_{\theta_p(g)}$. Note that injectivity passes from $\theta$ to $\alpha$, so $\alpha$ is an action by unital injective $*$-homomorphisms. We also have an action of the
opposite semigroup $P^{\op}$ by unital, positive, linear maps $L_p:C^*(G)\to C^*(G)$ given by
$L_p(\delta_g)= \chi_{\theta_p(G)}(g)\delta_{\theta_p^{-1}(g)}$.
Each $L_p$ is a transfer operator for $(C^*(G),\alpha_p)$, in the sense
that
$L_p(\alpha_p(a)b)=aL_p(b)$
for all $a,b\in C^*(G)$, and so $(C^*(G),P,\alpha,L)$ is a dynamical system in
the style of the Exel-Larsen
systems studied in \cite{BR2,Lar}.

\begin{remark}\label{rem: Hilbert bimodules for ADS}
We use the by now standard construction for associating a right-Hilbert $C^*(G)$-bimodule to
each $p \in P$: we take $C^*(G)$ as a vector space and give it
a right action $a\cdot b=a\alpha_p(b)$, and a pre-inner product $\langle a,b\rangle_p=L_p(a^*b)$. We mod out by $N_p:=\{a\in
C^*(G) \mid L_p(a^*a)=0\}$ and complete to get a right-Hilbert $C^*(G)$-module $M_p$.
Left multiplication by elements of $C^*(G)$ extends to a left action $\varphi_p$ on $M_p$
by adjointable operators.
We denote the quotient map by $\pi_p : C^*(G)\to C^*(G)/N_p$ and note that the image of $\pi_p$ is dense in $M_p$.
\end{remark}

\begin{notation}\label{not: r1 projs}
For the remainder of this section we will write $\EE_{g,p}$ for the generalised rank one projection $\Theta_{\pi_p(\delta_g),\pi_p(\delta_g)}\in\KK(M_p)$, where $p\in P$ and $g\in G$.
\end{notation}


\begin{prop}\label{prop: hilbert bimodules}
Suppose $\{M_p \mid p\in P\}$ is the collection of right-Hilbert $C^*(G)$-bimodules from
Remark~\ref{rem: Hilbert bimodules for ADS}. For each $p\in P$ and
each transversal $T_p$ of $G/\theta_p(G)$ the family $\{\EE_{(g,p)}\mid g\in T_p\}$ consists of pairwise orthogonal projections in $\KK(M_p)$. In particular, the set $\{\pi_p(\delta_g) \mid g \in T_p\}$
is an orthonormal basis for $M_p$. Furthermore, $\varphi_p(C^*(G)) \subset \KK(M_p)$
holds if and only if $G/\theta_p(G)$ is finite.
\end{prop}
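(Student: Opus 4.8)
The plan is to argue directly from the concrete model of $M_p$ recalled in Remark~\ref{rem: Hilbert bimodules for ADS}. First I would record the inner products of the generators: for $g,h\in G$ we have $\langle\delta_g,\delta_h\rangle_p = L_p(\delta_{g^{-1}h}) = \chi_{\theta_p(G)}(g^{-1}h)\,\delta_{\theta_p^{-1}(g^{-1}h)}$. In particular $\langle\pi_p(\delta_g),\pi_p(\delta_g)\rangle_p = \delta_1 = 1$, so every $\pi_p(\delta_g)$ is a unit vector (hence nonzero), and $\langle\pi_p(\delta_g),\pi_p(\delta_h)\rangle_p = 0$ whenever $g$ and $h$ lie in distinct cosets of $\theta_p(G)$. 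Thus $\{\pi_p(\delta_g)\mid g\in T_p\}$ is an orthonormal family, and the standard rank-one identities $\Theta_{\xi,\eta}\Theta_{\mu,\nu}=\Theta_{\xi\langle\eta,\mu\rangle,\nu}$ and $\Theta_{\xi,\eta}^*=\Theta_{\eta,\xi}$ immediately yield that each $\EE_{(g,p)}=\Theta_{\pi_p(\delta_g),\pi_p(\delta_g)}$ is a nonzero projection and that $\EE_{(g,p)}\EE_{(h,p)}=0$ for distinct $g,h\in T_p$.

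Next I would establish the reconstruction formula $\eta=\sum_{g\in T_p}\pi_p(\delta_g)\langle\pi_p(\delta_g),\eta\rangle_p$ for all $\eta\in M_p$, which, together with orthonormality, is exactly the assertion that $\{\pi_p(\delta_g)\mid g\in T_p\}$ is an orthonormal basis. The crucial point is that every generator can be moved into a fixed transversal using the right action: if $g\in G$ and $g_0\in T_p$ represents $g\theta_p(G)$, then with $h:=\theta_p^{-1}(g_0^{-1}g)\in G$ one has $\delta_g=\delta_{g_0}\alpha_p(\delta_h)$, i.e.\ $\pi_p(\delta_g)=\pi_p(\delta_{g_0})\cdot\delta_h$. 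Since $\pi_p(C^*(G))$ is dense in $M_p$, it follows that $\newspan\{\pi_p(\delta_{g_0})\cdot b\mid g_0\in T_p,\ b\in C^*(G)\}$ is dense in $M_p$. On an element of this form the formula is immediate from orthonormality, and since the partial sums $q_F:=\sum_{g\in F}\EE_{(g,p)}$ over finite $F\subseteq T_p$ are projections (hence contractions), a routine density-and-boundedness argument extends the formula to all of $M_p$.

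For the last statement I would first reduce to showing that $\id_{M_p}\in\KK(M_p)$ if and only if $G/\theta_p(G)$ is finite. Indeed $\varphi_p(1)=\id_{M_p}$, and since $\KK(M_p)$ is an ideal in $\LL(M_p)$ we have $\varphi_p(C^*(G))\subseteq\KK(M_p)$ precisely when $\id_{M_p}=\varphi_p(1)\in\KK(M_p)$. If $G/\theta_p(G)$ is finite, then $T_p$ is finite and the reconstruction formula gives $\id_{M_p}=\sum_{g\in T_p}\EE_{(g,p)}\in\KK(M_p)$. If $G/\theta_p(G)$ is infinite, assume towards a contradiction that $\id_{M_p}\in\KK(M_p)$. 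The reconstruction formula shows $q_F\xi\to\xi$ for all $\xi\in M_p$, which forces $q_F T\to T$ in norm for all $T\in\KK(M_p)$ (verify on rank-one operators, then use $\|q_F\|\leq 1$); in particular $q_F=q_F\id_{M_p}\to\id_{M_p}$ in norm. But for every finite $F\subsetneq T_p$ the operator $\id_{M_p}-q_F$ is a projection dominating $\EE_{(g,p)}$ for any $g\in T_p\setminus F$, hence nonzero, so $\|\id_{M_p}-q_F\|=1$; this contradicts norm convergence. I expect the orthonormal-basis step to be the main obstacle: one must check carefully that the span of the $\pi_p(\delta_{g_0})\cdot C^*(G)$ genuinely exhausts $M_p$ and that the net of finite partial sums converges in norm (and not merely strictly), whereas everything else is routine Hilbert-module manipulation.
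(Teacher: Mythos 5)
Your proposal is correct and follows essentially the same route as the paper: the same inner-product computation gives orthonormality of $\{\pi_p(\delta_g)\mid g\in T_p\}$ and pairwise orthogonality of the $\EE_{(g,p)}$, the reconstruction formula is verified on the dense image of $\pi_p$ and extended by contractivity of the partial sums, and the compactness criterion rests on $\varphi_p(1)=\sum_{g\in T_p}\EE_{(g,p)}$. Your treatment of the infinite-index case is just a more detailed version of the paper's observation that an infinite orthogonal family of nonzero projections summing (strictly) to $\varphi_p(1)$ rules out compactness, so no changes are needed.
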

\begin{proof}
For each $g,h\in T_p$ we have
\[
\langle \pi_p(\delta_g),\pi_p(\delta_h)\rangle_p = L_p(\delta_{g^{-1}h})=
\begin{cases}
1 & \text{if $g=h$,}\\
0 & \text{otherwise}.
\end{cases}
\]
Thus $\EE_{g,p}\EE_{h,p}=0$ whenever $g,h\in T_p$ are distinct. For each $h\in G$ there is a unique $g'\in T_p$ with $h \in g'\theta_p(G)$, and hence
\[\begin{array}{c}
\sum\limits_{g\in T_p}\EE_{g,p}(\pi_p(\delta_h))
=\sum\limits_{g\in T_p}\chi_{g\theta_p(G)}(h)\pi_p(\delta_h)
= \chi_{g'\theta_p(G)}(h)\pi_p(\delta_h)
= \pi_p(\delta_h).
\end{array}\]
By linearity and continuity, this means $\sum_{g\in T_p}\EE_{g,p}(m) = m$ for all $m\in M_p$, and hence
$\{\pi_p(\delta_g) \mid g \in T\}$ is an orthonormal basis for $M_p$.

For the third assertion note that $\varphi_p(1)=\sum_{g\in T_p}\EE_{g,p}$,
which, in case $T_p$ is finite, means that $\varphi_p(1)\in\KK(M_p)$, and
hence $\varphi_p(C^*(G))\subset\KK(M_p)$. If $T_p$ is infinite, then $\varphi_p(1)$ is an infinite sum (in the strict topology)
of mutually orthogonal, equivalent, non-zero projections $\EE_{g,p}$. Thus, $\varphi_p(1)$ is not compact.
\end{proof}

\noindent We can use the bimodules $M_p$ to construct a product system of
right-Hilbert $C^*(G)$-bimodules over $P$.

\begin{prop}\label{prop: product system}
Suppose $\{M_p \mid p\in P\}$ is the collection of right-Hilbert $C^*(G)$-bimodules from
Remark~\ref{rem: Hilbert bimodules for ADS}. Then the semigroup
$M:=\bigsqcup_{p \in P}M_p$ with multiplication characterised by
\[
\pi_p(\delta_g)\pi_q(\delta_h) = \pi_{pq}(\delta_{g\theta_p(h)})\quad\text{for
$p,q\in P$, $g,h\in G$},
\]
is a product system of right-Hilbert $C^*(G)$-bimodules over $P$.
\end{prop}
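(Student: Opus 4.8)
The plan is to verify the three defining properties of a product system from Fowler's definition: that the multiplication is associative and well-defined on the disjoint union, that $\xi \otimes_{C^*(G)} \eta \mapsto \xi\eta$ induces an isomorphism $M_p \otimes_{C^*(G)} M_q \cong M_{pq}$ for $p \neq 1$, and that $M_1 = {}_{C^*(G)}C^*(G)_{C^*(G)}$ with the module actions giving the products $M_1 \times M_p \to M_p$ and $M_p \times M_1 \to M_p$. First I would check that the proposed formula $\pi_p(\delta_g)\pi_q(\delta_h) = \pi_{pq}(\delta_{g\theta_p(h)})$ is well-defined, i.e.\ that it respects the relations defining $M_p$ and $M_q$ as quotients/completions of $C^*(G)$; since the $\pi_p(\delta_g)$ with $g$ ranging over $G$ (or a transversal $T_p$) span a dense subspace of $M_p$ by Proposition~\ref{prop: hilbert bimodules}, it suffices to define the product on these elements and check boundedness, after which it extends by linearity and continuity. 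The key computation is that the map $\xi \otimes \eta \mapsto \xi\eta$ is isometric for the balanced tensor product norm: for the elementary tensors this reduces to comparing $\langle \pi_p(\delta_g)\pi_q(\delta_h), \pi_p(\delta_{g'})\pi_q(\delta_{h'})\rangle_{pq}$ with $\langle \pi_q(\delta_h), \varphi_q(\langle \pi_p(\delta_g),\pi_p(\delta_{g'})\rangle_p)\pi_q(\delta_{h'})\rangle_q$, and both sides unwind via the formula $L_p(\delta_k) = \chi_{\theta_p(G)}(k)\delta_{\theta_p^{-1}(k)}$ to $\chi_{\theta_p(G)}(g^{-1}g')$ times $L_q$ applied to the appropriate group element.

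More precisely, I would proceed as follows. Set up the bilinear map $M_p \times M_q \to M_{pq}$ on the dense subsets $\{\pi_p(\delta_g)\}$, $\{\pi_q(\delta_h)\}$ by the displayed formula, extend $C^*(G)$-bilinearly, and verify it is $C^*(G)$-balanced: $(\pi_p(\delta_g) \cdot a)\pi_q(\delta_h) = \pi_p(\delta_g)(a \cdot \pi_q(\delta_h))$, which amounts to $\pi_p(\delta_g\theta_p(a))\pi_q(\delta_h) = \pi_p(\delta_g)\pi_q(\delta_{\theta_q^{-1}?}...)$ — here one uses that the right action on $M_p$ is $a \cdot b = a\alpha_p(b)$ and the left action on $M_q$ is by $\varphi_q$, and the identity $\theta_p\theta_q = \theta_{pq}$ together with the multiplicativity of $\theta_p$. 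This gives a well-defined linear map $M_p \otimes_{C^*(G)} M_q \to M_{pq}$. Then I would show it is isometric by the inner-product computation sketched above, using the transfer-operator identity $L_{pq} = L_q \circ L_p$ (equivalently $\theta_{pq}^{-1} = \theta_q^{-1}\theta_p^{-1}$ on the relevant subgroup) and the orthonormal basis from Proposition~\ref{prop: hilbert bimodules}; surjectivity onto a dense subspace is immediate since $\pi_{pq}(\delta_{g\theta_p(h)})$ with $g \in T_p$, $h \in T_q$ (or $g,h \in G$) exhausts a dense subset of $M_{pq}$ — indeed one should check $T_pT_q$-products give a transversal for $G/\theta_{pq}(G)$, or simply that $\{g\theta_p(h) : g \in G, h \in G\} = G$. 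Finally the $M_1$ conditions: $\theta_1 = \id_G$, $\alpha_1 = \id$, $L_1 = \id$, so $M_1 \cong C^*(G)$ with the standard bimodule structure and the products $M_1 \times M_p$, $M_p \times M_1$ reduce to the module actions by inspection of the multiplication formula, and associativity of $M$ on triple products follows from associativity in $P$, $G$, and the cocycle-free identity $\theta_p\theta_q = \theta_{pq}$.

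The main obstacle I expect is the isometry/well-definedness of $M_p \otimes M_q \cong M_{pq}$: one must be careful that the formula $\pi_p(\delta_g)\pi_q(\delta_h) = \pi_{pq}(\delta_{g\theta_p(h)})$ is compatible with passing to the quotient by $N_p$ and $N_q$ and with completing, and that the inner product genuinely matches across the tensor-product balancing. The cleanest way to handle this is to first establish the identity at the level of inner products of \emph{elementary tensors} — where both sides are explicit characteristic-function expressions in $G$ — then invoke that a $C^*(G)$-bilinear map preserving inner products on a dense spanning set of elementary tensors extends uniquely to an isometry of the completed balanced tensor product, and is surjective onto a dense subspace hence onto all of $M_{pq}$. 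Once this is in place, the associativity and the $M_1$ conditions are routine bookkeeping with $\theta$ and require no further ideas; I would state these briefly and refer to the standard construction (as in the Exel--Larsen literature cited, \cite{BR2,Lar}) for the details that are purely formal.
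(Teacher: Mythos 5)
Your proposal is correct and follows essentially the same route as the paper: define the product on the elements $\pi_p(\delta_g)$, check $C^*(G)$-bilinearity and balancedness, and verify that inner products of elementary tensors are preserved via the transfer-operator identity (the paper's computation rewrites $\langle \pi_p(\delta_{g_1})\otimes\pi_q(\delta_{h_1}),\pi_p(\delta_{g_2})\otimes\pi_q(\delta_{h_2})\rangle$ as $L_q(L_p(\alpha_p(\delta_{h_1^{-1}})\delta_{g_1^{-1}}\delta_{g_2}\alpha_p(\delta_{h_2})))=L_{pq}(\cdots)$, exactly your $L_{pq}=L_q\circ L_p$ step), so the map extends to an isomorphism $M_p\otimes_{C^*(G)}M_q\cong M_{pq}$, with the $M_1$ and associativity conditions treated as routine. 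The only blemish is the stray question mark in your balancing computation; the left action of $a$ on $M_q$ is plain left multiplication, so both sides equal $\pi_{pq}(\delta_{g\theta_p(kh)})$ and no $\theta_q^{-1}$ enters.
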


\begin{proof}
The map $(\pi_p(\delta_g),\pi_q(\delta_h))\mapsto
\pi_{pq}(\delta_{g\theta_p(h)})$ is bilinear, and extends to a surjective map from the
algebraic balanced tensor product $M_p\odot_{C^*(G)} M_q$ to $M_{pq}$. This map is
equivariant with respect to the left and right actions of $C^*(G)$. It also follows from the calculation
\begin{align*}
\langle \pi_p(\delta_{g_1})\otimes \pi_q(\delta_{h_1}),
\pi_p(\delta_{g_2})\otimes \pi_q(\delta_{h_2})\rangle
&= \langle \pi_q(\delta_{h_1}),\langle\pi_p(\delta_{g_1}),\pi_p(\delta_{g_2})\rangle_p\cdot\pi_q(\delta_{h_2})\rangle_q\\
&= L_q(\delta_{h_1^{-1}}L_p(\delta_{g_1^{-1}}\delta_{g_2})\delta_{h_2})\\
&=
L_q(L_p(\alpha_p(\delta_{h_1^{-1}})\delta_{g_1^{-1}}\delta_{g_2}\alpha_p(\delta_{h_2})))\\
&=
L_{pq}(\delta_{\theta_p(h_1^{-1})}\delta_{g_1^{-1}}\delta_{g_2}\delta_{\theta_p(h_2)})\\
&=\langle
\pi_{pq}(\delta_{g_1\theta_p(h_1)}),\pi_{pq}(\delta_{g_2\theta_p(h_2)})
\end{align*}
that $M_p\odot_{C^*(G)} M_q\to M_{pq}$ preserves the inner product, and hence extends
to an isomorphism $M_p\otimes_{C^*(G)} M_q\to M_{pq}$. The remaining conditions for
$M:=\bigsqcup_{p \in
P}M_p$ to be a product system are straightforward to check.
\end{proof}

\begin{prop}\label{prop:GxP prod sys ONB}
Given $p,q \in P$ and transversals $T_p,T_q$ of $G/\theta_p(G)$,
$G/\theta_q(G)$, respectively, the map $m_{p,q}:T_p \times T_q \to G$ given by $(g,h)
\mapsto g\theta_p(h)$ is injective and its image is a transversal for
$G/\theta_{pq}(G)$. In particular, $\{\pi_{pq}(\delta_{g\theta_p(h)}) \mid g
\in T_p, h \in T_q\}$ is an orthonormal basis for $M_{pq}$.
\end{prop}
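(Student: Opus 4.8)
The plan is to show three things in sequence: that $m_{p,q}$ is injective, that its image meets every coset of $\theta_{pq}(G)$, and then to deduce the orthonormal basis claim from Proposition~\ref{prop: hilbert bimodules}. For injectivity, suppose $g_1\theta_p(h_1) = g_2\theta_p(h_2)$ with $g_i \in T_p$ and $h_i \in T_q$. Then $g_1^{-1}g_2 = \theta_p(h_1 h_2^{-1}) \in \theta_p(G)$, so $g_1$ and $g_2$ lie in the same coset of $\theta_p(G)$; since $T_p$ is a transversal this forces $g_1 = g_2$, and then $\theta_p(h_1) = \theta_p(h_2)$, so by injectivity of $\theta_p$ we get $h_1 = h_2$. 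Hence $m_{p,q}$ is injective.

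For surjectivity onto $G/\theta_{pq}(G)$, take an arbitrary $x \in G$. Since $T_p$ is a transversal of $G/\theta_p(G)$, there is a unique $g \in T_p$ with $x \in g\theta_p(G)$, say $x = g\theta_p(y)$ for some $y \in G$. Since $T_q$ is a transversal of $G/\theta_q(G)$, write $y = h\theta_q(z)$ with $h \in T_q$ and $z \in G$. Then $x = g\theta_p(h\theta_q(z)) = g\theta_p(h)\,\theta_p(\theta_q(z)) = m_{p,q}(g,h)\,\theta_{pq}(z)$, using that $\theta$ is an action so $\theta_p\circ\theta_q = \theta_{pq}$. Thus $x$ lies in the coset $m_{p,q}(g,h)\theta_{pq}(G)$, and the image of $m_{p,q}$ hits every coset. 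Combined with injectivity, $m_{p,q}(T_p\times T_q)$ is a set of coset representatives, i.e., a transversal for $G/\theta_{pq}(G)$.

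Finally, the orthonormal basis assertion is immediate: Proposition~\ref{prop: hilbert bimodules}, applied to the element $pq \in P$ and the transversal $T_{pq} := m_{p,q}(T_p\times T_q)$, gives that $\{\pi_{pq}(\delta_w) \mid w \in T_{pq}\}$ is an orthonormal basis for $M_{pq}$; since $w$ ranges over $\{g\theta_p(h) \mid g \in T_p, h \in T_q\}$, this is exactly the claimed family.

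There is no real obstacle here — the only point requiring a moment's care is to invoke the right defining property of $\theta$ as a $P$-action (namely $\theta_p\theta_q = \theta_{pq}$) at the surjectivity step, and to use injectivity of the $\theta_p$'s twice in the injectivity argument; both are available from Definition~\ref{def:ADS}.
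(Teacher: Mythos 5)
Your proof is correct and follows essentially the same route as the paper: the same injectivity computation, the same decomposition $x = g\theta_p(h)\,\theta_{pq}(z)$ obtained by splitting $\theta_p^{-1}(g^{-1}x)$ along $T_q$ for the covering step, and the same appeal to Proposition~\ref{prop: hilbert bimodules} for the orthonormal basis claim. (Like the paper, you leave implicit that distinct image points represent distinct cosets of $\theta_{pq}(G)$ -- injectivity into $G$ alone does not give this -- but it follows by repeating your injectivity computation modulo $\theta_{pq}(G)$, using that $\theta_p(G)\theta_{pq}(G)=\theta_p(G)$.)
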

\begin{proof}
Suppose we have $m_{p,q}(g_1,h_1) = m_{p,q}(g_2,h_2)$ for some $g_i \in
T_p,h_i \in T_q$. This amounts to $g_1=g_2\theta_p(h_2h_1^{-1})$, so $g_1=g_2$
as $T_p$ is a transversal. As $\theta_p$ is injective, we then immediately get
$h_1=h_2$. Thus $m_{p,q}$ is injective. Now given $g \in G$, let $g_1 \in T_p$
be the element satisfying $g \in g_1\theta_p(G)$. Next, choose $h_1 \in T_q$
such that $\theta_p^{-1}(g_1^{-1}g) \in h_1\theta_q(G)$. Then
\[(g_1\theta_p(h_1))^{-1}g = \theta_p(h_1^{-1}\theta_p^{-1}(g_1^{-1}g)) \in
\theta_p(\theta_q(G))=\theta_{pq}(G),\]
so $g \in m_{p,q}(g_1,h_1)\theta_{pq}(G)$. Thus $m_{p,q}(T_p \times T_q)$ is a
transversal for $G/\theta_{pq}(G)$.
\end{proof}

\noindent Proposition~\ref{prop:GxP prod sys ONB} can also be proven using the fact that orthonormal bases for $M_p$ and $M_q$ yield an orthonormal basis for $M_p \otimes_{C^*(G)} M_q \cong M_{pq}$, see \cite[Lemma 4.3]{LR}. 

In order to prove that the product system $M$ from Proposition~\ref{prop: product system} is compactly aligned, we need the following lemma. Within its proof, we will make multiple applications of the identity
\[
\Theta_{\pi_p(\delta_{g_1}),\pi_p(\delta_{g_2})}(\pi_p(\delta_h))
=\chi_{g_2\theta_p(G)}(h)\pi_p(\delta_{g_1g_2^{-1}h})\quad\text{for all $p\in
P$, $g_1,g_2,h\in G$}.
\]

\begin{lemma}\label{lem: prod of iota of matrix units}
For each $p,q,r \in P$ with $pP \cap qP = rP$ and
$g_1,g_2,h_1,h_2\in G$ we have
\begin{align*}
\iota_p^r(\Theta_{\pi_p(\delta_{g_1}),\pi_p(\delta_{g_2})})&
\iota_q^r(\Theta_{\pi_q(\delta_{h_1}),\pi_q(\delta_{h_2})})\\
&\qquad=
\begin{cases}
\Theta_{\pi_r(\delta_{g_1\theta_p(k)}),\pi_r(\delta_{h_2\theta_q(\ell)})}
& \text{if $g_2\theta_p(k)=h_1\theta_q(\ell)$ for some $k,\ell\in G$,}\\
0 & \text{otherwise.}
\end{cases}
\end{align*}
\end{lemma}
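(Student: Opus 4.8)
The plan is to compute the composition $\iota_p^r(\Theta_{\pi_p(\delta_{g_1}),\pi_p(\delta_{g_2})})\iota_q^r(\Theta_{\pi_q(\delta_{h_1}),\pi_q(\delta_{h_2})})$ by evaluating it on a spanning set of $M_r$, using the explicit orthonormal basis of $M_r$ supplied by Proposition~\ref{prop:GxP prod sys ONB}. Fix $p'$ and $q'$ with $pp'=qq'=r$, so that every basis vector of $M_r$ has the form $\pi_r(\delta_{g_2\theta_p(m)})$ where $m$ ranges over $G$; more directly, it is enough to test against vectors $\pi_r(\delta_a)$ for arbitrary $a\in G$ and use linearity and continuity together with $\varphi_r(1)=\sum_{g}\EE_{g,r}$.

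First I would record how $\iota_p^r$ acts: by \eqref{eq: char of iota map}, $\iota_p^r(T)(\pi_p(\delta_{g})\pi_{p'}(\delta_{m})) = (T\pi_p(\delta_{g}))\pi_{p'}(\delta_{m})$, and using the product system multiplication $\pi_p(\delta_g)\pi_{p'}(\delta_m)=\pi_r(\delta_{g\theta_p(m)})$ this lets me express $\iota_p^r(\Theta_{\pi_p(\delta_{g_1}),\pi_p(\delta_{g_2})})$ on $M_r$ via the displayed identity $\Theta_{\pi_p(\delta_{g_1}),\pi_p(\delta_{g_2})}(\pi_p(\delta_h))=\chi_{g_2\theta_p(G)}(h)\pi_p(\delta_{g_1g_2^{-1}h})$ applied in the first factor. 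The upshot should be a clean formula: $\iota_p^r(\Theta_{\pi_p(\delta_{g_1}),\pi_p(\delta_{g_2})})(\pi_r(\delta_a)) = \chi_{g_2\theta_p(G)\theta_{p'}(G)\text{-type set}}(a)\,\pi_r(\delta_{g_1g_2^{-1}a})$, i.e. it multiplies on the left by $g_1g_2^{-1}$ on the appropriate subset and kills the rest. Here the key point is that the relevant subset where $\iota_p^r(\Theta_{\pi_p(\delta_{g_1}),\pi_p(\delta_{g_2})})$ is nonzero is $g_2\theta_p(G)$ viewed inside $G$ (because $\theta_{pp'}(G)=\theta_p(\theta_{p'}(G))\subseteq\theta_p(G)$), and after applying it the output lies in $g_1\theta_p(G)$.

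Next I would compose the two operators. Applying $\iota_q^r(\Theta_{\pi_q(\delta_{h_1}),\pi_q(\delta_{h_2})})$ first maps $\pi_r(\delta_a)$ to $\chi_{h_1\theta_q(G)}(a)\pi_r(\delta_{h_2h_1^{-1}a})$, then $\iota_p^r(\Theta_{\pi_p(\delta_{g_1}),\pi_p(\delta_{g_2})})$ sends that to something nonzero exactly when $h_2h_1^{-1}a\in g_2\theta_p(G)$, in which case the result is $g_1g_2^{-1}h_2h_1^{-1}a$. So the composition is nonzero precisely on those $a$ lying in $h_1\theta_q(G)\cap h_1h_2^{-1}g_2\theta_p(G)$, and by \eqref{eq:preserving the order} and the coset calculus (exactly as in the verification that (A2) is well-defined, and as in Lemma~\ref{lem: A2 and projs} / Proposition~\ref{prop: structure of J}) this intersection is nonempty iff $g_2\theta_p(k)=h_1\theta_q(\ell)$ for some $k,\ell\in G$, in which case it equals $h_1\theta_q(\ell)\theta_r(G)$, a single coset of $\theta_r(G)$. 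On that coset the map sends a generating vector to left-multiplication by $g_1g_2^{-1}h_2h_1^{-1}$, and one checks $g_1g_2^{-1}h_2h_1^{-1}\cdot h_1\theta_q(\ell) = g_1\theta_p(k)\cdot\big(h_2\theta_q(\ell)\big)^{-1}\cdot h_2\theta_q(\ell)\cdots$; tracking the coset representatives this is exactly $\Theta_{\pi_r(\delta_{g_1\theta_p(k)}),\pi_r(\delta_{h_2\theta_q(\ell)})}$. Independence of the choice of $(k,\ell)$ follows verbatim from the argument given after Definition~\ref{def: NT}, since any other solution differs by an element of $\theta_r(G)$ absorbed into the rank-one projection.

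The main obstacle I anticipate is purely bookkeeping: pinning down the subset of $G$ on which each $\iota$-operator is supported — one must carefully use $\theta_r(G)=\theta_p(\theta_{p'}(G))=\theta_q(\theta_{q'}(G))$ together with \eqref{eq:preserving the order} to see that the two supports intersect in a single $\theta_r(G)$-coset rather than something larger — and then matching the left-multiplication factor $g_1g_2^{-1}h_2h_1^{-1}$ against the claimed rank-one operator $\Theta_{\pi_r(\delta_{g_1\theta_p(k)}),\pi_r(\delta_{h_2\theta_q(\ell)})}$ by verifying the two act identically on the basis $\{\pi_r(\delta_a)\}$. Once the support bookkeeping is set up correctly, the identification is a short computation, and the ``otherwise'' case (when no $k,\ell$ exist) is immediate because the two supports are then disjoint, forcing the composition to vanish on every basis vector.
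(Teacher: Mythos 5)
Your overall strategy is sound and is essentially the computation the paper performs: the key formula $\iota_p^r(\Theta_{\pi_p(\delta_{g_1}),\pi_p(\delta_{g_2})})(\pi_r(\delta_a)) = \chi_{g_2\theta_p(G)}(a)\,\pi_r(\delta_{g_1g_2^{-1}a})$, obtained from \eqref{eq: char of iota map} and the factorisation $\pi_r(\delta_a)=\pi_p(\delta_a)\pi_{p'}(\delta_1)$, is exactly the first display in the paper's proof; the paper then repackages each $\iota$-image as a sum of rank-one operators over a transversal of $G/\theta_{p'}(G)$ (resp.\ $G/\theta_{q'}(G)$) and multiplies those sums, whereas you compose the two operators directly on the spanning vectors $\pi_r(\delta_a)$. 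Either packaging works, and your appeal to the well-definedness argument after Definition~\ref{def: NT} for independence of $(k,\ell)$ is in the same spirit as the paper's final paragraph.

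However, there is a concrete error in your bookkeeping: you apply $\Theta_{\pi_q(\delta_{h_1}),\pi_q(\delta_{h_2})}$ with the legs swapped. Since $\Theta_{\xi,\eta}(\mu)=\xi\cdot\langle\eta,\mu\rangle$, one has $\Theta_{\pi_q(\delta_{h_1}),\pi_q(\delta_{h_2})}(\pi_q(\delta_s))=\chi_{h_2\theta_q(G)}(s)\,\pi_q(\delta_{h_1h_2^{-1}s})$, so $\iota_q^r(\Theta_{\pi_q(\delta_{h_1}),\pi_q(\delta_{h_2})})$ is supported on $h_2\theta_q(G)$ and multiplies on the left by $h_1h_2^{-1}$ --- not, as you wrote, supported on $h_1\theta_q(G)$ with multiplier $h_2h_1^{-1}$. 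With your formulas the non-vanishing condition would come out as $g_2\theta_p(k)=h_2\theta_q(\ell)$ rather than $g_2\theta_p(k)=h_1\theta_q(\ell)$, and the composite's multiplier $g_1g_2^{-1}h_2h_1^{-1}$ does not agree with the multiplier $g_1\theta_p(k)\bigl(h_2\theta_q(\ell)\bigr)^{-1}=g_1g_2^{-1}h_1h_2^{-1}$ of the claimed operator $\Theta_{\pi_r(\delta_{g_1\theta_p(k)}),\pi_r(\delta_{h_2\theta_q(\ell)})}$, so the final identification as stated would fail. The slip is fixable: with the correct action, the composite is supported exactly where $a\in h_2\theta_q(G)$ and $h_1h_2^{-1}a\in g_2\theta_p(G)$, which (using $\theta_p(G)\cap\theta_q(G)=\theta_r(G)$ and injectivity, as you indicate) is the single coset $h_2\theta_q(\ell)\theta_r(G)$ precisely when some $k,\ell$ satisfy $g_2\theta_p(k)=h_1\theta_q(\ell)$, and the multiplier is $g_1g_2^{-1}h_1h_2^{-1}$, matching the asserted rank-one operator; otherwise the supports are disjoint from the relevant cosets and the product is $0$.
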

\begin{proof}
Choose $p',q'\in P$ with $pp'=qq'=r$. Let $T_{p'}$ and $T_{q'}$ be
transversals for
$G/\theta_{p'}(G)$ and $G/\theta_{q'}(G)$, respectively. We know from
Proposition~\ref{prop: hilbert bimodules} that $\{\pi_{p'}(\delta_i) \mid i\in
T_{p'}\}$
and $\{\pi_{q'}(\delta_j) \mid j\in T_{q'}\}$ are orthonormal bases for $M_{p'}$
and $M_{q'}$, respectively. We claim that
\begin{equation}\label{eq: decomps of iotas}
\begin{array}{c} \iota_p^r(\Theta_{\pi_p(\delta_{g_1}),\pi_p(\delta_{g_2})}) = \sum\limits_{i\in
T_{p'}}\Theta_{\pi_r(\delta_{g_1\theta_p(i)}),\pi_r(\delta_{g_2\theta_p(i)})}
\end{array}
\end{equation}
and
\begin{equation}\label{eq: decomps of iotas p2}
\begin{array}{c} \iota_q^r(\Theta_{\pi_q(\delta_{h_1}),\pi_q(\delta_{h_2})}) = \sum\limits_{j\in
T_{q'}}\Theta_{\pi_r(\delta_{h_1\theta_q(j)}),\pi_r(\delta_{h_2\theta_q(j)})}.
\end{array}
\end{equation}
To see that \eqref{eq: decomps of iotas} holds, fix $s\in G$. Using \eqref{eq:
char of iota map} we get
\begin{align*}
\iota_p^r(\Theta_{\pi_p(\delta_{g_1}),\pi_p(\delta_{g_2})})(\pi_r(\delta_s))
&=
\Theta_{\pi_p(\delta_{g_1}),\pi_p(\delta_{g_2})}(\pi_p(\delta_s))
\pi_{p'}(\delta_1)\\
&=
\chi_{g_2\theta_p(G)}(s)\pi_p(\delta_{g_1g_2^{-1}s})
\pi_{p'}(\delta_1)\\
&=
\chi_{g_2\theta_p(G)}(s)\pi_r(\delta_{g_1g_2^{-1}s}).
\end{align*}
For $i\in T_{p'}$ we have
\[
\Theta_{\pi_r(\delta_{g_1\theta_p(i)}),\pi_r(\delta_{g_2\theta_p(i)})}(\pi_r(\delta_s))
 =
\chi_{g_2\theta_p(i)\theta_r(G)}(s)\pi_r(\delta_{g_1g_2^{-1}s}).
\]
Since $g_2\theta_p(G) = \bigsqcup_{i \in T_{p'}} g_2\theta_p(i)\theta_r(G)$, we know that
$s$ belongs to $g_2\theta_p(G)$ if and only if there is an $i\in T_{p'}$ with
$s\in g_2\theta_p(i)\theta_r(G)$. Note that $i$ is uniquely determined in this case. It follows
that $\iota_p^r(\Theta_{\pi_p(\delta_{g_1}),\pi_p(\delta_{g_2})})$ and
$\sum_{i\in
T_{p'}}\Theta_{\pi_r(\delta_{g_1\theta_p(i)}),\pi_r(\delta_{g_2\theta_p(i)})}$
agree on the image of $\pi_r$, and hence on $M_r$. So \eqref{eq: decomps of
iotas} holds. A similar argument gives \eqref{eq: decomps of iotas p2}.

Now observe that, for all $i \in T_{p'},j \in T_{q'}$, we have
\[\begin{array}{l}
\Theta_{\pi_r(\delta_{g_1\theta_p(i)}),\pi_r(\delta_{g_2\theta_p(i)})}
\Theta_{\pi_r(\delta_{h_1\theta_q(j)}),\pi_r(\delta_{h_2\theta_q(j)})}\vspace*{2mm}\\
\hspace*{40mm}= \chi_{g_2\theta_p(i)\theta_r(G)}(h_1\theta_q(j))
\Theta_{\pi_r(\delta_{g_1g_2^{-1}h_1\theta_q(j)}),\pi_r(\delta_{h_2\theta_q(j)})}.
\end{array}\]
It then follows from \eqref{eq: decomps of iotas} and \eqref{eq: decomps of
iotas p2} that
$\iota_p^r(\Theta_{\pi_p(\delta_{g_1}),\pi_p(\delta_{g_2})})\iota_q^r(\Theta_{\pi_q(\delta_{h_1}),\pi_q(\delta_{h_2})})
 \neq 0$ if and only if
\[
h_1\theta_q(j)\in g_2\theta_p(i)\theta_r(G)\text{ for some $i\in T_{p'}$,
$j\in T_{q'}$.}
\]
As $r \in pP$, we have $\theta_p(G)\theta_r(G) = \theta_p(G)$, so the equation from above is equivalent to $g_2^{-1}h_1 \in \theta_p(G)\theta_q(G)$. This in turn is equivalent to the
existence of $k,\ell \in G$ satisfying $g_2\theta_p(k) = h_1\theta_q(\ell)$.
In this case, the summand of
$\iota_p^r(\Theta_{\pi_p(\delta_{g_1}),\pi_p(\delta_{g_2})})\iota_q^r(\Theta_{\pi_q(\delta_{h_1}),\pi_q(\delta_{h_2})})$
 corresponding to the unique pair $(i,j)$ with $k \in i\theta_{p'}(G)$ and
$\ell \in j\theta_{q'}(G)$ is non-zero.

Now suppose $k_1,k_2,\ell_1,\ell_2 \in G$ satisfy $g_2\theta_p(k_n) = h_1\theta_q(\ell_n), n=1,2$. We need to show that $k_1^{-1}k_2 \in \theta_{p'}(G)$ and $\ell_1^{-1}\ell_2 \in \theta_{q'}(G)$. Rewriting the two equations as
\[\theta_p(k_1)\theta_q(\ell_1^{-1}) = g_2^{-1}h_1 = \theta_p(k_2)\theta_q(\ell_2^{-1})\]
gives $\theta_p(k_1^{-1}k_2) = \theta_q(\ell_1^{-1}\ell_2)$. Since $\theta$ respects the order, we conclude that $\theta_p(k_1^{-1}k_2) \in \theta_p(G) \cap \theta_q(G) = \theta_r(G)$ and, similarly, $\theta_q(\ell_1^{-1}\ell_2) \in \theta_r(G)$. Using that $\theta_p$ and $\theta_q$ are injective and $pp' = qq' = r$, we conclude that $k_1^{-1}k_2 \in \theta_{p'}(G)$ and $\ell_1^{-1}\ell_2 \in \theta_{q'}(G)$.
\end{proof}

\begin{remark}\label{rem: iota and projection Thetas}
Note that if we apply Lemma~\ref{lem: prod of iota of matrix units} to
$p,q,r \in P$ with $pP\cap qP=rP$, $g=g_1=g_2\in G$ and $h=h_1=h_2\in G$, we
get the identity
\begin{equation}\label{eq: iota and projection Thetas}
\iota_p^r(\EE_{g,p})\iota_q^r(\EE_{h,q})=
\begin{cases}
\EE_{g\theta_p(k),r} & \text{if $g\theta_p(k)\in h\theta_q(G)$ for some $k\in G$,}\\
0 & \text{otherwise.}
\end{cases}
\end{equation}
\end{remark}

\begin{prop}\label{prop: compactly aligned}
The product system $M$ from Proposition~\ref{prop: product system} is compactly aligned.
\end{prop}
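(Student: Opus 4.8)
The plan is to invoke Lemma~\ref{lem:cp aligned from cp aligned Par frames}, so it suffices to verify condition (CA) for the product system $M$. Fix $p,q,r \in P$ with $pP \cap qP = rP$. By Proposition~\ref{prop: hilbert bimodules}, for any choice of transversals $T_p$ of $G/\theta_p(G)$ and $T_q$ of $G/\theta_q(G)$, the families $\{\pi_p(\delta_g) \mid g \in T_p\}$ and $\{\pi_q(\delta_h) \mid h \in T_q\}$ are orthonormal bases for $M_p$ and $M_q$ respectively, hence in particular Parseval frames. I would take these as the Parseval frames $(\xi_i^{(p)})$ and $(\xi_j^{(q)})$ required in (CA); note that with this choice $\Theta_{\xi_g^{(p)},\xi_g^{(p)}} = \EE_{g,p}$ and $\Theta_{\xi_h^{(q)},\xi_h^{(q)}} = \EE_{h,q}$ in the notation of Notation~\ref{not: r1 projs}.

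The remaining point is that $\iota_p^r(\EE_{g,p})\iota_q^r(\EE_{h,q}) \in \KK(M_r)$ for all $g \in T_p$, $h \in T_q$. This is exactly the content of Remark~\ref{rem: iota and projection Thetas}, equation~\eqref{eq: iota and projection Thetas} (itself the special case $g = g_1 = g_2$, $h = h_1 = h_2$ of Lemma~\ref{lem: prod of iota of matrix units}): the product equals either the generalised rank-one projection $\EE_{g\theta_p(k),r}$, if $g\theta_p(k) \in h\theta_q(G)$ for some $k \in G$, or $0$ otherwise. In both cases it lies in $\KK(M_r)$. Thus condition (CA) holds, and Lemma~\ref{lem:cp aligned from cp aligned Par frames} immediately yields that $M$ is compactly aligned.

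In fact there is essentially no obstacle remaining at this stage: all of the genuine work — decomposing $\iota_p^r(\Theta_{\pi_p(\delta_{g_1}),\pi_p(\delta_{g_2})})$ as a sum of rank-one operators over the transversal $T_{p'}$ (equations~\eqref{eq: decomps of iotas} and~\eqref{eq: decomps of iotas p2}) and computing the product of such operators — was carried out in Lemma~\ref{lem: prod of iota of matrix units}, and the passage from orthonormal basis frames to arbitrary elements of $\KK(M_p)$, $\KK(M_q)$ is handled once and for all inside the proof of Lemma~\ref{lem:cp aligned from cp aligned Par frames} via linearity, continuity, and closedness of $\KK(M_r)$ in $\LL(M_r)$. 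So the proof of Proposition~\ref{prop: compactly aligned} is just the assembly of these two ingredients.

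\begin{proof}
By Lemma~\ref{lem:cp aligned from cp aligned Par frames} it suffices to verify condition (CA). Let $p,q,r \in P$ satisfy $pP \cap qP = rP$, and choose transversals $T_p$ of $G/\theta_p(G)$ and $T_q$ of $G/\theta_q(G)$. By Proposition~\ref{prop: hilbert bimodules}, $\{\pi_p(\delta_g) \mid g \in T_p\}$ and $\{\pi_q(\delta_h) \mid h \in T_q\}$ are orthonormal bases, hence Parseval frames, for $M_p$ and $M_q$ respectively. For $g \in T_p$ and $h \in T_q$ we have $\Theta_{\pi_p(\delta_g),\pi_p(\delta_g)} = \EE_{g,p}$ and $\Theta_{\pi_q(\delta_h),\pi_q(\delta_h)} = \EE_{h,q}$, and Remark~\ref{rem: iota and projection Thetas} gives
\[
\iota_p^r(\EE_{g,p})\iota_q^r(\EE_{h,q}) =
\begin{cases}
\EE_{g\theta_p(k),r} & \text{if $g\theta_p(k) \in h\theta_q(G)$ for some $k \in G$,}\\
0 & \text{otherwise,}
\end{cases}
\]
which lies in $\KK(M_r)$ in either case. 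Hence (CA) holds and $M$ is compactly aligned.
\end{proof}
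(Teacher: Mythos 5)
Your proposal is correct and follows exactly the paper's route: the paper's proof likewise deduces compact alignment immediately from Lemma~\ref{lem:cp aligned from cp aligned Par frames} together with equation~\eqref{eq: iota and projection Thetas}, using the orthonormal bases of Proposition~\ref{prop: hilbert bimodules} as the Parseval frames in condition (CA). Your write-up simply makes these choices explicit, which is fine.
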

\begin{proof}
This follows immediately from Lemma~\ref{lem:cp aligned from cp aligned Par
frames} and \eqref{eq: iota and projection Thetas}.
\end{proof}

\noindent We can now state the main result of this section. Recall from
Section~\ref{sec: right LCM product systems} that $j_M$ denotes to the
universal Nica covariant representation of a compactly aligned product system $M$; for each
$p\in P$ we denote by $j_p$ the restriction $j_M|_{M_p}$.

\begin{thm}\label{thm: isom NT and A}
Let $M$ be the product system given in Proposition~\ref{prop: product system}. There is an
isomorphism $\varphi: \AA[G,P,\theta]\to \NT(M)$ satisfying
\[
\varphi(u_g)=j_1(\delta_g)\quad \text{and}\quad \varphi(s_p)=j_p(\pi_p(1)),
\]
for all $g\in G$, $p\in P$.
\end{thm}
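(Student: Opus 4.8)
The plan is to construct mutually inverse $*$-homomorphisms between $\AA[G,P,\theta]$ and $\NT(M)$ using the respective universal properties. For the forward direction, I would verify that the elements $j_1(\delta_g)$, $g\in G$, and $j_p(\pi_p(1))$, $p\in P$, of $\NT(M)$ satisfy relations (A1) and (A2) (equivalently, by Lemma~\ref{lem: A2 and projs}, relation \eqref{eq: prod of projs}). The unitarity of $g\mapsto j_1(\delta_g)$ follows from $j_1$ being a $*$-homomorphism on $C^*(G)$. For the isometric property of $p\mapsto j_p(\pi_p(1))$, observe that $\langle \pi_p(1),\pi_p(1)\rangle_p = L_p(1) = 1$, so property (3) of a representation gives $j_p(\pi_p(1))^*j_p(\pi_p(1)) = j_1(1) = 1$. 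Relation (A1) amounts to $j_p(\pi_p(1))j_1(\delta_g) = j_1(\delta_{\theta_p(g)})j_p(\pi_p(1))$, which follows from property (2) applied to the product system multiplication: $\pi_p(1)\cdot\delta_g = \pi_p(\alpha_p(\delta_g)) = \pi_p(\delta_{\theta_p(g)}) = \delta_{\theta_p(g)}\cdot\pi_p(1)$ inside $M_p$ (using the left and right actions). For relation \eqref{eq: prod of projs}, note that the range projection of $j_p(\pi_p(1))j_1(\delta_g)$ — equivalently of $j_1(\delta_g)j_p(\pi_p(1))$ after a unitary conjugation — is $j^{(p)}(\EE_{g,p})$, since $\Theta_{\pi_p(\delta_g),\pi_p(\delta_g)} = \EE_{g,p}$ and $\delta_g\cdot\pi_p(1) = \pi_p(\delta_g)$. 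So $e_{(g,p)}$ maps to $j^{(p)}(\EE_{g,p})$, and the product formula \eqref{eq: prod of projs} for these images is precisely Nica covariance \eqref{eq:new-Nica-cov} for $\psi = j_M$ combined with the computation of $\iota_p^r(\EE_{g,p})\iota_q^r(\EE_{h,q})$ in \eqref{eq: iota and projection Thetas} of Remark~\ref{rem: iota and projection Thetas}. The universal property of $\AA[G,P,\theta]$ then produces a $*$-homomorphism $\varphi$ with the stated values.

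For the reverse direction, I would use the universal property of $\NT(M)$: I must produce a Nica covariant representation $\psi$ of $M$ in $\AA[G,P,\theta]$. The natural candidate is $\psi_p(\pi_p(\delta_g)) = u_g s_p$ for $g\in G$, $p\in P$, extended linearly; in particular $\psi_1 = $ the canonical map $C^*(G)\to\AA[G,P,\theta]$ sending $\delta_g$ to $u_g$. I would check this is well-defined (respects the relation $N_p$): if $\pi_p(\delta_{g_1}) = \pi_p(\delta_{g_2})$ in $M_p$, then $g_1^{-1}g_2\in\theta_p(G)$, say $g_1^{-1}g_2 = \theta_p(h)$, and then $u_{g_1}s_p = u_{g_2}u_{\theta_p(h^{-1})}s_p = u_{g_2}s_p u_{h^{-1}}s_p^{-1}\cdot s_p$... — more carefully, $u_{g_1}s_p = u_{g_1}s_p s_p^* s_p$ and since $u_{g_2} = u_{g_1}u_{\theta_p(h)} = u_{g_1}s_p u_h s_p^*$ we need to match up; I will instead verify well-definedness at the level of the linear span by checking $\psi_p$ preserves inner products, i.e. $(u_{g_1}s_p)^*(u_{g_2}s_p) = \psi_1(\langle\pi_p(\delta_{g_1}),\pi_p(\delta_{g_2})\rangle_p)$, which equals $u_{\theta_p^{-1}(g_1^{-1}g_2)}$ if $g_1^{-1}g_2\in\theta_p(G)$ and $0$ otherwise — this is exactly an instance of relation (A2) with $q = p$. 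The linearity-and-density argument then forces $\psi_p$ to descend to $M_p$ and to be isometric hence bounded. Property (2) of a representation — $\psi_p(\xi)\psi_q(\eta) = \psi_{pq}(\xi\eta)$ — follows from the product system multiplication formula $\pi_p(\delta_g)\pi_q(\delta_h) = \pi_{pq}(\delta_{g\theta_p(h)})$ together with $(u_g s_p)(u_h s_q) = u_g u_{\theta_p(h)} s_p s_q = u_{g\theta_p(h)}s_{pq}$, using (A1). Then Nica covariance of $\psi$ reduces, via \eqref{eq: iota and projection Thetas} and linearity/continuity, to checking $\psi^{(p)}(\EE_{g,p})\psi^{(q)}(\EE_{h,q})$ against the prescribed formula, and $\psi^{(p)}(\EE_{g,p}) = \psi_p(\pi_p(\delta_g))\psi_p(\pi_p(\delta_g))^* = u_g s_p s_p^* u_g^* = e_{(g,p)}$, so this is exactly \eqref{eq: prod of projs}, which holds by Lemma~\ref{lem: A2 and projs}. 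The universal property of $\NT(M)$ then yields a $*$-homomorphism $\varphi': \NT(M)\to\AA[G,P,\theta]$ with $j_1(\delta_g)\mapsto u_g$ and $j_p(\pi_p(1)) = j_p(\pi_p(\delta_1))\mapsto u_1 s_p = s_p$.

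Finally, I would check that $\varphi$ and $\varphi'$ are mutually inverse. This is immediate by evaluating the composites on the generators: $\varphi'\circ\varphi$ sends $u_g\mapsto j_1(\delta_g)\mapsto u_g$ and $s_p\mapsto j_p(\pi_p(1))\mapsto s_p$, so it is the identity on $\AA[G,P,\theta]$; conversely $\varphi\circ\varphi'$ fixes $j_1(\delta_g)$ and $j_p(\pi_p(1))$, and since $\NT(M)$ is generated by $j_M$, and every $\pi_p(\delta_g) = \delta_g\cdot\pi_p(1)$ so $j_p(\pi_p(\delta_g)) = j_1(\delta_g)j_p(\pi_p(1))$ — while a general element of $M_p$ is a norm limit of linear combinations of the $\pi_p(\delta_g)$ — it follows that $\varphi\circ\varphi'$ is the identity on a generating set, hence on all of $\NT(M)$.

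I expect the main obstacle to be the careful justification that $\psi_p$ is well-defined and bounded on $M_p$: one must argue that the assignment $\pi_p(\delta_g)\mapsto u_g s_p$ respects the passage to the quotient $C^*(G)/N_p$ and its completion. The cleanest route, as sketched above, is to bypass an explicit well-definedness check by verifying the inner-product identity $\psi_p(\xi)^*\psi_p(\eta) = \psi_1(\langle\xi,\eta\rangle_p)$ directly on the spanning elements $\xi = \pi_p(\delta_g)$, $\eta = \pi_p(\delta_h)$ using relation (A2); this simultaneously establishes that the kernel $N_p$ is respected (elements of $N_p$ have zero inner product with everything, hence map to elements of norm zero) and that $\psi_p$ is isometric, hence extends continuously to all of $M_p$. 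The remaining verifications — multiplicativity (2), Nica covariance — are then routine given Lemma~\ref{lem: prod of iota of matrix units}, Remark~\ref{rem: iota and projection Thetas}, and Lemma~\ref{lem: A2 and projs}.
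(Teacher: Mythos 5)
Your overall strategy is the paper's: both maps are obtained from the respective universal properties, (A2) is replaced by \eqref{eq: prod of projs} via Lemma~\ref{lem: A2 and projs}, the forward direction uses Nica covariance of $j_M$ together with \eqref{eq: iota and projection Thetas}, and the inverse is the representation $\pi_p(\delta_g)\mapsto u_gs_p$. The forward direction and your careful treatment of well-definedness of $\psi_p$ (via the inner-product identity, which is (A2) with $q=p$) are fine --- indeed more detailed than the paper, which calls this step routine.

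The gap is in your verification that $\psi$ is Nica covariant. You assert that Nica covariance ``reduces, via \eqref{eq: iota and projection Thetas} and linearity/continuity, to checking $\psi^{(p)}(\EE_{g,p})\psi^{(q)}(\EE_{h,q})$ against the prescribed formula.'' Linearity and continuity only reduce \eqref{eq:new-Nica-cov} to general rank-one operators $\Theta_{\pi_p(\delta_{g_1}),\pi_p(\delta_{g_2})}$ with $g_1\neq g_2$ allowed; the diagonal projections $\EE_{g,p}$ do \emph{not} have dense linear span in $\KK(M_p)$. Since $\{\pi_p(\delta_g)\mid g\in T_p\}$ is an orthonormal basis (Proposition~\ref{prop: hilbert bimodules}), one has $M_p\cong \ell^2(T_p)\otimes C^*(G)$ and $\KK(M_p)\cong \KK(\ell^2(T_p))\otimes C^*(G)$, while $\clsp\{\EE_{g,p}\mid g\in G\}$ is only the ``diagonal'' part $c_0(T_p)\otimes\C 1$; so the reduction you invoke is not a span argument at all. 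This reduction is precisely the content of the paper's Proposition~\ref{prop:GxP characterising Nica covariance}, whose nontrivial direction is proved by writing $\Theta_{\pi_p(\delta_{g_1}),\pi_p(\delta_{g_2})}=\Theta_{\pi_p(\delta_{g_1}),\pi_p(\delta_{g_2})}\EE_{g_2,p}$ and $\Theta_{\pi_q(\delta_{h_1}),\pi_q(\delta_{h_2})}=\EE_{h_1,q}\Theta_{\pi_q(\delta_{h_1}),\pi_q(\delta_{h_2})}$, collapsing the middle projections using \eqref{eq: char of Nica cov}, and then matching the result against Lemma~\ref{lem: prod of iota of matrix units}. To close the gap you must either prove that proposition (as the paper does) or verify \eqref{eq:new-Nica-cov} directly on general matrix units: compute $u_{g_1}s_ps_p^*u_{g_2}^*u_{h_1}s_qs_q^*u_{h_2}^*$ using (A2) and compare with $\psi^{(r)}$ applied to the formula of Lemma~\ref{lem: prod of iota of matrix units}. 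Either route is a genuine additional argument, not a consequence of \eqref{eq: iota and projection Thetas} plus linearity and continuity.
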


\noindent We will use the following result which provides a characterisation of
Nica covariance in terms of the generalised rank one projections $\EE_{g,p}$.

\begin{prop}\label{prop:GxP characterising Nica covariance}
Let $M$ be the product system given in Proposition~\ref{prop: product system}.
A representation $\psi$ of $M$ is
Nica covariant if and only if for all $g,h \in G$ and $p,q \in P$ we have
\begin{equation}\label{eq: char of Nica cov}
\psi^{(p)}(\EE_{g,p})\psi^{(q)}(\EE_{h,q}) =
\begin{cases}
\psi^{(r)}(\EE_{g\theta_p(k),r}) & \text{if $pP\cap qP=rP$ for some $r\in P$
and
}\\
 & \text{$g\theta_p(k)\in h\theta_q(G)$ for
some $k\in G$,}\\
0 & \text{otherwise.}
\end{cases}
\end{equation}
\end{prop}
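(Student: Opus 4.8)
The plan is to prove both implications by reducing the general Nica covariance condition (Definition~\ref{def:Nica covariance for right LCM}) to its restriction to the generalised rank one projections $\EE_{g,p}$, using the fact that these form orthonormal-basis rank one operators for the modules $M_p$. The forward implication is immediate: if $\psi$ is Nica covariant, then applying \eqref{eq:new-Nica-cov} to $T_p = \EE_{g,p}$ and $T_q = \EE_{h,q}$ together with the explicit formula \eqref{eq: iota and projection Thetas} from Remark~\ref{rem: iota and projection Thetas} yields exactly \eqref{eq: char of Nica cov}.

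The substance is the reverse implication. Here I would argue that \eqref{eq: char of Nica cov} forces the full Nica covariance condition. First, fix $p,q \in P$ and $T_p \in \KK(M_p)$, $T_q \in \KK(M_q)$. By Proposition~\ref{prop: hilbert bimodules}, for any transversal $T_p$ the family $\{\pi_p(\delta_g) \mid g \in T_p\}$ is an orthonormal basis of $M_p$, so a partial-sum/reconstruction-formula argument (exactly as in the proof of Lemma~\ref{lem:cp aligned from cp aligned Par frames}) shows that $\sum_{g \in T_p} \EE_{g,p}$ converges strictly to $\varphi_p(1)$ and that $\sum_{g \in F} T_p \EE_{g,p} \to T_p$ in norm as $F \nearrow T_p$, and similarly $\sum_{h \in F'} \EE_{h,q} T_q \to T_q$. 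Applying the homomorphisms $\psi^{(p)}$ and $\psi^{(q)}$ and using $\psi^{(p)}(T_p)\psi^{(p)}(\EE_{g,p}) = \psi^{(p)}(T_p \EE_{g,p})$ reduces the computation of $\psi^{(p)}(T_p)\psi^{(q)}(T_q)$ to controlling $\psi^{(p)}(\EE_{g,p})\psi^{(q)}(\EE_{h,q})$ for $g \in T_p$, $h \in T_q$, which is governed by \eqref{eq: char of Nica cov}. If $pP \cap qP = \emptyset$, every such product vanishes, hence $\psi^{(p)}(T_p)\psi^{(q)}(T_q) = 0$. If $pP \cap qP = rP$, then each nonzero term equals $\psi^{(r)}(\EE_{g\theta_p(k),r})$, and one identifies the resulting limit with $\psi^{(r)}(\iota_p^r(T_p)\iota_q^r(T_q))$; here one uses that $M$ is compactly aligned (Proposition~\ref{prop: compactly aligned}), so $\iota_p^r(T_p)\iota_q^r(T_q) \in \KK(M_r)$, together with the norm-convergence $\sum_{(g,h)} \iota_p^r(T_p \EE_{g,p})\iota_q^r(\EE_{h,q}T_q) \to \iota_p^r(T_p)\iota_q^r(T_q)$ in $\KK(M_r)$ and continuity of $\psi^{(r)}$.

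The one step requiring genuine care — and the main obstacle — is matching the bookkeeping between the two sides when $pP \cap qP = rP$: one must check that, after choosing $p',q' \in P$ with $pp' = qq' = r$ and transversals $T_{p'},T_{q'}$, the decompositions \eqref{eq: decomps of iotas} and \eqref{eq: decomps of iotas p2} of $\iota_p^r$ and $\iota_q^r$ into rank one operators over $M_r$ combine correctly, so that the double sum over $g \in T_p$, $h \in T_q$ reorganizes into the norm-convergent expansion of $\iota_p^r(T_p)\iota_q^r(T_q)$ in $\KK(M_r)$. This is essentially a reprise of the proof of Lemma~\ref{lem: prod of iota of matrix units} carried out at the level of the representation $\psi$ rather than on the modules themselves, but the independence of the answer from the choice of $r$ (Remark~\ref{rem: Nica cov condition}) and from the transversals needs to be invoked. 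Once this identification is in place, linearity and continuity promote the statement from rank one operators to all of $\KK(M_p)$ and $\KK(M_q)$, completing the proof.
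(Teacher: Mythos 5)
Your proposal is correct, but it takes a genuinely different route from the paper's. The paper handles the forward direction exactly as you do, via \eqref{eq: iota and projection Thetas}; for the converse, however, it reduces by linearity and continuity to the rank-one operators $\Theta_{\pi_p(\delta_{g_1}),\pi_p(\delta_{g_2})}$ and $\Theta_{\pi_q(\delta_{h_1}),\pi_q(\delta_{h_2})}$, inserts the projections $\EE_{g_2,p}$ and $\EE_{h_1,q}$, applies \eqref{eq: char of Nica cov} to the middle factor, and then performs an explicit computation with $\psi_p,\psi_{p'},\psi_{q'},\psi_q,\psi_r$ (using factorisations such as $\pi_r(\delta_{g_2\theta_p(k)})=\pi_p(\delta_{g_2})\pi_{p'}(\delta_k)$ and $\psi_1(\langle\pi_p(\delta_{g_2}),\pi_p(\delta_{g_2})\rangle_p)=1$), finally invoking Lemma~\ref{lem: prod of iota of matrix units} to recognise the outcome as $\psi^{(r)}\bigl(\iota_p^r(\cdot)\iota_q^r(\cdot)\bigr)$. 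You instead keep general $T_p\in\KK(M_p)$, $T_q\in\KK(M_q)$ and run a frame argument in the style of Lemma~\ref{lem:cp aligned from cp aligned Par frames}: expand $T_p=\lim_{F}\sum_{g\in F}T_p\EE_{g,p}$ and $T_q=\lim_{F'}\sum_{h\in F'}\EE_{h,q}T_q$, feed the hypothesis into the middle products, and identify the limit using compact alignment (Proposition~\ref{prop: compactly aligned}) and continuity of $\psi^{(r)}$. This buys a more conceptual statement -- it makes transparent the remark following the proposition, that Nica covariance need only be verified on rank-one projections attached to orthonormal bases, and it adapts to other product systems with such bases -- whereas the paper's computation is more elementary and self-contained, exploiting the special vectors $\pi_p(\delta_g)$. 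The one step you flag but should write out is the term-by-term identity $\psi^{(p)}(T_p\EE_{g,p})\psi^{(q)}(\EE_{h,q}T_q)=\psi^{(r)}\bigl(\iota_p^r(T_p\EE_{g,p})\iota_q^r(\EE_{h,q}T_q)\bigr)$ in the non-vanishing case; it is not quite a reprise of Lemma~\ref{lem: prod of iota of matrix units}, but follows from the hypothesis together with the standard compatibility $\psi^{(p)}(T)\psi_r(\xi\eta)=\psi_r((T\xi)\eta)$ for $r=pp'$, $\xi\in M_p$, $\eta\in M_{p'}$, which gives $\psi^{(p)}(T)\psi^{(r)}(K)=\psi^{(r)}(\iota_p^r(T)K)$ and, by taking adjoints, $\psi^{(r)}(K)\psi^{(q)}(T')=\psi^{(r)}(K\iota_q^r(T'))$, combined with \eqref{eq: iota and projection Thetas} to rewrite $\EE_{g\theta_p(k),r}=\iota_p^r(\EE_{g,p})\iota_q^r(\EE_{h,q})$. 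With that identity made explicit, your limit argument closes the proof.
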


\begin{proof}
If $\psi$ is Nica covariant, then \eqref{eq: char of Nica cov} follows
immediately from \eqref{eq: iota and projection Thetas}. For the converse
direction, we note that, by linearity and continuity, it suffices to check
\eqref{eq:new-Nica-cov} for $\Theta_{\pi_p(\delta_{g_1}),\pi_p(\delta_{g_2})}$
and $\Theta_{\pi_q(\delta_{h_1}),\pi_q(\delta_{h_2})}$ for all $p,q \in P$ and
$g_i,h_i \in G$. We have
\begin{align*}
&\psi^{(p)}(\Theta_{\pi_p(\delta_{g_1}),\pi_p(\delta_{g_2})})
\psi^{(q)}(\Theta_{\pi_q(\delta_{h_1}),\pi_q(\delta_{h_2})})\\
&\hspace{4.5cm}
=\psi^{(p)}(\Theta_{\pi_p(\delta_{g_1}),\pi_p(\delta_{g_2})}
\EE_{g_2,p})
\psi^{(q)}(\EE_{h_1,q}
\Theta_{\pi_q(\delta_{h_1}),\pi_q(\delta_{h_2})})\\
&\hspace{4.5cm}
=\psi^{(p)}(\Theta_{\pi_p(\delta_{g_1}),\pi_p(\delta_{g_2})})
\psi^{(p)}(\EE_{g_2,p})
\psi^{(q)}(\EE_{h_1,q})
\psi^{(q)}(\Theta_{\pi_q(\delta_{h_1}),\pi_q(\delta_{h_2})}),
\end{align*}
which we know by \eqref{eq: char of Nica cov} is equal to
\begin{equation}\label{eq: midway proj}
\psi^{(p)}(\Theta_{\pi_p(\delta_{g_1}),\pi_p(\delta_{g_2})})
\psi^{(r)}(\EE_{g_2\theta_p(k),r})
\psi^{(q)}(\Theta_{\pi_q(\delta_{h_1}),\pi_q(\delta_{h_2})})
\end{equation}
if $pP\cap qP=rP$ for some $r\in P$ and $g_2\theta_p(k)\in h\theta_q(G)$ for
some $k\in G$, and is zero, otherwise. We immediately see from \eqref{eq: char of Nica cov} and Lemma~\ref{lem: prod of iota of matrix units} that
\eqref{eq:new-Nica-cov} is satisfied when $pP\cap qP=\emptyset$ or $g_2^{-1}h_1 \notin \theta_p(G)\theta_q(G)$.

So suppose $pP\cap qP=rP$ for some $r\in P$ and $g_2\theta_p(k)\in h_1\theta_q(G)$
for some $k\in G$. Let $p',q'\in P$ with $pp'=qq'=r$, and $\ell\in G$ with
$g_2\theta_p(k)=h_1\theta_q(\ell)$. The expression in \eqref{eq: midway
proj} can be written as
\[
\psi_p(\pi_p(\delta_{g_1}))\psi_p(\pi_p(\delta_{g_2}))^*
\psi^{(r)}(\EE_{g_2\theta_p(k),r})
\psi_q(\pi_q(\delta_{h_1}))\psi_q(\pi_q(\delta_{h_2}))^*.
\]
Rewriting the middle term gives
\begin{align*}
\psi^{(r)}(\EE_{g_2\theta_p(k),r})&=
\psi_r(\pi_r(\delta_{g_2\theta_p(k)}))\psi_r(\pi_r(\delta_{g_2\theta_p(k)}))^*\\
&
=
\psi_r(\pi_r(\delta_{g_2\theta_p(k)}))\psi_r(\pi_r(\delta_{h_1\theta_q(\ell)}))^*\\
&
= \psi_p(\pi_p(\delta_{g_2}))\psi_{p'}(\pi_{p'}(\delta_k))
\psi_{q'}(\pi_{q'}(\delta_{\ell}))^*
\psi_q(\pi_q(\delta_{h_1}))^*.
\end{align*}
Using $\psi_1(\langle\pi_p(\delta_{g_2}),
\pi_p(\delta_{g_2})\rangle_p)=1=\psi_1(\langle\pi_q(\delta_{h_1}),
\pi_q(\delta_{h_1})\rangle_q)$, we turn the expression in \eqref{eq: midway proj} into
\begin{align*}
\psi_p(\pi_p(\delta_{g_1}))\psi_{p'}(\pi_{p'}(\delta_k))
\psi_{q'}(\pi_{q'}(\delta_{\ell}))^*
\psi_q(\pi_q(\delta_{h_2}))^*&= \psi_r(\pi_r(\delta_{g_1\theta_p(k)}))
\psi_r(\pi_r(\delta_{h_2\theta_q(\ell)}))\\
&= \psi^{(r)}(
\Theta_{\pi_r(\delta_{g_1\theta_p(k)}),\pi_r(\delta_{h_2\theta_q(\ell)})}).
\end{align*}
But we know from Lemma~\ref{lem: prod of iota of matrix units} that
\[
\psi^{(r)}\big(\iota_p^r(\Theta_{\pi_p(\delta_{g_1}),\pi_p(\delta_{g_2})})
\iota_q^r(\Theta_{\pi_q(\delta_{h_1}),\pi_q(\delta_{h_2})})\big)=
\psi^{(r)}(
\Theta_{\pi_r(\delta_{g_1\theta_p(k)}),\pi_r(\delta_{h_2\theta_q(\ell)})}).
\]
So we get
\[
\psi^{(p)}(\Theta_{\pi_p(\delta_{g_1}),\pi_p(\delta_{g_2})})
\psi^{(q)}(\Theta_{\pi_q(\delta_{h_1}),\pi_q(\delta_{h_2})})
=\psi^{(r)}\big(\iota_p^r(\Theta_{\pi_p(\delta_{g_1}),\pi_p(\delta_{g_2})})
\iota_q^r(\Theta_{\pi_q(\delta_{h_1}),\pi_q(\delta_{h_2})})\big),
\]
and hence \eqref{eq:new-Nica-cov} is satisfied.
\end{proof}

\noindent The conclusion we draw from Proposition~\ref{prop:GxP characterising
Nica covariance} is that, for the product system $M$, it suffices to check
Nica covariance on rank one projections coming from suitable orthonormal
bases.

\begin{proof}[Proof of Theorem~\ref{thm: isom NT and A}]
The proof follows closely that of Theorem~\ref{thm: A is a semigroup
algebra}. The map $g\mapsto j_1(\delta_g)$ is a unitary representation of
$G$ in $\NT(M)$, and $p\mapsto j_p(\pi_p(1))$ is a representation of $P$
by isometries in $\NT(M)$. We have
\[
j_p(\pi_p(1))j_1(\delta_g) = j_p(\pi_p(\delta_{\theta_p(g)})) =
j_1(\delta_{\theta_p(g)})j_p(\pi_p(1)),
\]
which is (A1). Instead of (A2), we show \eqref{eq: prod of projs}, see Lemma~\ref{lem: A2 and projs}. First note that
$j_1(\delta_g)j_p(\pi_p(1))j_p(\pi_p(1))^*j_1(\delta_g)^*=j^{(p)}(\EE_{g,p})$.
Now \eqref{eq: prod of projs} follows from applying \eqref{eq: char of Nica cov} to the representation $j$. So the universal property of
$\AA[G,P,\theta]$ gives a homomorphism $\varphi:
\AA[G,P,\theta]\to \NT(M)$ satisfying $\varphi(u_g)=j_1(\delta_g)$ and
$ \varphi(s_p)=j_p(\pi_p(1))$, for all $g\in G$, $p\in P$. This homomorphism
is surjective because each $j_p(\pi_p(\delta_g))=\varphi(u_gs_p)$ is in the
range of $\varphi$.

It is routine to show that $\pi_p(\delta_g) \mapsto u_gs_p$ defines a
representation of the product system $M$, and \eqref{eq: prod of projs} gives
\eqref{eq: char of Nica cov}, which is Nica covariance. The induced
homomorphism $\psi:\NT(M)\to \AA[G,P,\theta]$ is inverse to $\varphi$, and
hence $\varphi$ is an isomorphism.
\end{proof}

\noindent We note the following immediate consequence:

\begin{cor}\label{cor: C*(P) as NT-alg for right LCM P}
If $S$ is a right LCM semigroup, then $C^*(S)$ is isomorphic to the Nica-Toeplitz algebra $\mathcal{NT}(M)$ of the product system of right-Hilbert bimodules $M$ over $S$ with $M_p = \C$ for all $p \in S$.
\end{cor}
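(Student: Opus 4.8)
The plan is to realize the trivial-fibre product system over a right LCM semigroup $S$ as a special case of the construction in Theorem~\ref{thm: isom NT and A}, by applying it to a degenerate algebraic dynamical system. Specifically, I would take $G = \{1\}$, the trivial group, and let $\theta$ be the (necessarily trivial) action of $S$ on $\{1\}$ by the identity endomorphism. One checks immediately that $(\{1\}, S, \theta)$ is an algebraic dynamical system in the sense of Definition~\ref{def:ADS}: the only endomorphism of the trivial group is $\id$, it is injective, and \eqref{eq:preserving the order} holds vacuously since $\theta_p(G) = G = \theta_q(G) = \theta_r(G)$ for all $p,q,r$. Then the semidirect product $\{1\} \rtimes_\theta S$ is canonically isomorphic to $S$, so Theorem~\ref{thm: A is a semigroup algebra} gives $\AA[\{1\},S,\theta] \cong C^*(\{1\}\rtimes_\theta S) \cong C^*(S)$.

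Next I would identify the product system $M$ attached to $(\{1\},S,\theta)$ via Remark~\ref{rem: Hilbert bimodules for ADS} and Proposition~\ref{prop: product system}. Since $G = \{1\}$, we have $C^*(G) = \C$, each transfer operator $L_p$ is the identity map $\C \to \C$, and the bimodule $M_p$ built from $\C$ with right action $a\cdot b = a\alpha_p(b) = ab$ and inner product $\langle a,b\rangle_p = L_p(\bar a b) = \bar a b$ is just ${}_\C\C_\C = \C$. The multiplication $M_p \times M_q \to M_{pq}$ from Proposition~\ref{prop: product system} becomes ordinary multiplication in $\C$, and one verifies the product-system axioms are trivially satisfied; the system is compactly aligned by Proposition~\ref{prop: compactly aligned} (indeed $\KK(M_p) = \C = \LL(M_p)$, so compact alignment is automatic). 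Thus $M$ is precisely the product system over $S$ with all fibres equal to $\C$.

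Finally I would invoke Theorem~\ref{thm: isom NT and A}, which yields an isomorphism $\AA[\{1\},S,\theta] \cong \NT(M)$. Composing with the isomorphism $\AA[\{1\},S,\theta] \cong C^*(S)$ from the previous paragraph gives $C^*(S) \cong \NT(M)$, as claimed. I do not expect any serious obstacle here: the entire argument is a matter of specializing the machinery already developed to the trivial group, and the only thing to check carefully is that the fibre bimodules and their multiplication genuinely reduce to $\C$ with its standard structure, which is a short unwinding of the definitions in Remark~\ref{rem: Hilbert bimodules for ADS} and Proposition~\ref{prop: product system}. The mildest subtlety is that one should note $S$ need not have an identity in the statement of the corollary while Definition~\ref{def:ADS} requires $P$ to be a monoid; if $S$ lacks an identity one adjoins one, observes this does not change $C^*(S)$ in the relevant sense, or simply reads the corollary under the paper's standing convention that right LCM semigroups carry an identity.
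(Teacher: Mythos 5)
Your proposal is correct and is essentially identical to the paper's own proof: the paper also specialises to the algebraic dynamical system $(\{1\},S,\id)$, notes $\{1\}\rtimes_{\id}S\cong S$, and chains Theorem~\ref{thm: A is a semigroup algebra} with Theorem~\ref{thm: isom NT and A} to get $C^*(S)\cong\AA[\{1\},S,\id]\cong\NT(M)$. Your extra unwinding of the fibres to $\C$ and the remark on the identity convention are fine but add nothing beyond the paper's argument.
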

\begin{proof}
The triple $(\{1\},S,\id)$ defines an algebraic dynamical system and
$\{1\} \rtimes_{\id} S \cong S$, so Theorem~\ref{thm: A is a semigroup algebra} and
Theorem~\ref{thm: isom NT and A} imply
\[C^*(S) \cong C^*(\{1\} \rtimes_{\id} S) \cong \AA[\{1\},S,\id] \cong \mathcal{NT}(M)\]
for the product system $M$ from Proposition~\ref{prop: product system} associated to $(\{1\},S,\id)$.
\end{proof}

\section{Examples}\label{sec: examples}
\noindent Observing that {algebraic dynamical systems} are a natural
generalisation of {irreversible algebraic dynamical systems} as
introduced in \cite[Definition 1.5]{Sta1}, we already have various examples at
our disposal, namely \cite[Examples 1.8--1.11,1.14]{Sta1}.

\begin{remark}\label{rem:A vs O for IADS}
For each irreversible algebraic dynamical system $(G,P,\theta)$, the
$C^*$-algebra $\AA[G,P,\theta]$ can be viewed as the natural Toeplitz
extension of the $C^*$-algebra $\OO[G,P,\theta]$ from \cite{Sta1}. In fact,
$\AA[G,P,\theta]$ is the Nica-Toeplitz algebra of the product system $M$
associated to $(G,P,\theta)$ according to Theorem~\ref{thm: isom NT and A} and
$\OO[G,P,\theta]$ is the Cuntz-Nica-Pimsner algebra of $M$ if $(G,P,\theta)$
is of finite type, see \cite[Theorem 5.9]{Sta1}.
\end{remark}

\noindent But the class of algebraic dynamical systems is much larger:

\begin{example}\label{ex:group C*-algs}
Suppose $P$ is a discrete group acting on another discrete group $G$ by
automorphisms $\theta_p, p \in P$. Then $(G,P,\theta)$ is an algebraic
dynamical system and $\AA[G,P,\theta] = C^*(G \rtimes_\theta P) = C^*(G)
\rtimes P$. In particular, we can take $P = \{1\}$ and obtain the full group
$C^*$-algebra for $G$.
\end{example}

\begin{example}\label{ex:right LCM sgp C*-algs}
Similar to the last part in Example~\ref{ex:group C*-algs}, if we let
$G=\{1\}$, then any right LCM semigroup $P$ acts on $G$ by the trivial action
and $\AA[G,P,\theta]$ is nothing but the full semigroup $C^*$-algebra of $P$.
\end{example}

\begin{remark}\label{rem:ADS vs right LCM}
Since we know that, conversely, $\gxp$ is a right LCM semigroup for every
algebraic dynamical system $(G,P,\theta)$, we see that studying algebraic
dynamical systems is a priori more sensible than studying right LCM
semigroups. The difference is that an algebraic dynamical system comes
equipped with a decomposition of the right LCM semigroup as a semidirect
product of a group by a right LCM semigroup. A way of extracting a specific semidirect
product description for certain cancellative semigroups out of their semigroup
structure is presented in \cite[Proposition 2.11]{BLS1}.
\end{remark}

\begin{example}\label{ex: add boundary quotient}
The semigroup $\N^\times$ is a right LCM semigroup which acts upon $\Z$ by multiplication in
an order preserving way. Hence we get an algebraic dynamical system
$(\Z,\N^\times,\cdot)$. By
Theorem~\ref{thm: A is a semigroup algebra}, we have $\AA[\Z,\N^\times,\cdot]
\cong C^*(\Z \rtimes \N^\times)$, and by Theorem~\ref{thm: isom NT and A}, they both are isomorphic to
a Nica-Toeplitz algebra for a product system of right-Hilbert $C^*(\Z)$-modules over $\N^\times$. It was observed in \cite[Example 3.9]{BaHLR} and in \cite{HLS0}
that the latter $C^*$-algebra is isomorphic to the additive boundary quotient
$\TT_{\text{add}}(\N \rtimes \N^\times)$ of $\TT(\N \rtimes \N^\times)$, the
Toeplitz algebra of the affine semigroup over the natural numbers. We note
that $(\Z,\N^\times,\cdot)$ is in fact an irreversible algebraic dynamical
system and $\OO[\Z,\N^\times,\cdot] \cong \QQ_\N$.
\end{example}

\noindent Example~\ref{ex: add boundary quotient} yields an alternative
perspective on the boundary quotient diagram of \cite{BaHLR} which seems
well-suited for generalisations to other cases that are similar to $\N \subset
\Z$. The next example provides a reasonable framework for such a task.

\begin{example}\label{ex:ring of integers}
Suppose $R$ is the ring of integers in a number field $K$ and let $(r)$ denote
the principal ideal generated by $r \in R$. If we take $G=R$ (equipped with
addition) and $P \subset R^\times$ is a subsemigroup satisfying $(p) \cap (q)
\in \{ (r) \mid r \in P\} \text{ for all } p,q \in P,$ then $(G,P,\cdot)$ is
an algebraic dynamical system. Note that the intersection condition is automatically
satisfied if $R$ is a principal ideal domain, i.e. the class number of $K$ is
one. In this case, we recover the natural extension $C^*(R \rtimes R^\times)$
of the ring $C^*$-algebra $\mathfrak{A}[R]$ from \cite{CL}.
\end{example}

\begin{remark}\label{rem:ring of integers - towards the general case}
Even though there are examples of semigroups $P$ as in Example~\ref{ex:ring of
integers} for number fields $K$ with class number bigger than one, it would be
convenient to have a systematic treatment which is available for arbitrary
subsemigroups of $R^\times$ for arbitrary number fields $K$. This asks for a
relaxation of the condition
\[\theta_p(G) \cap \theta_q(G) = \theta_r(G) \text{ whenever } pP \cap qP =
rP\]
to the constraint
\[ [\theta_p(G) \cap \theta_q(G) : \theta_r(G)] < \infty \text{ whenever } pP \cap qP \supset rP \text{ and $r$ is minimal}\]
in the sense that $pP \cap qP \supset r'P$ and $r'P \supset rP$ imply $r'P=rP$. We would like to point out that this condition is somewhat reminiscent of Spielberg's notion of {finite alignment} for
categories of paths, see \cite[Section 3]{Spie}.
\end{remark}

\noindent Let us end with two basic constructions for algebraic dynamical
systems, which have been described in \cite[Examples 1.12 and 1.13]{Sta1} for
the irreversible case. The first one is built by shifting some suitable group
$G_0$ along a right LCM semigroup $P$.

\begin{prop}\label{prop:shift over right LCM}
Let $P$ be a countable, right LCM semigroup with identity and $G_0$ a
countable group. Then $P$ admits a shift action $\theta$ on $G:=\bigoplus_P G_0$
and $(G,P,\theta)$ is an algebraic dynamical system.
\end{prop}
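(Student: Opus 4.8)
The plan is to equip $G=\bigoplus_P G_0$ with the evident shift action and then check the three requirements of Definition~\ref{def:ADS}. Concretely, I regard elements of $G$ as finitely supported functions $f\colon P\to G_0$ with pointwise multiplication, and I define $\theta_p$ by pushing supports forward along left translation by $p$: since $P$ is left cancellative, every $x\in pP$ is uniquely of the form $x=py$, so I set $(\theta_p f)(x)=f(y)$ for such $x$ and $(\theta_p f)(x)=1_{G_0}$ for $x\notin pP$.

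First I would verify that each $\theta_p$ is a well-defined injective group endomorphism of $G$. It lands in $G$ because $\supp(\theta_p f)=p\cdot\supp(f)$ is finite; it is a homomorphism because the operations on $G$ are pointwise; and it is injective because $\theta_p f=\theta_p f'$ forces $f(y)=(\theta_p f)(py)=(\theta_p f')(py)=f'(y)$ for all $y\in P$. Next I would check the action axioms: $\theta_1=\id_G$ is immediate, and for $p,q\in P$ a one-line computation using left cancellation shows $(\theta_p\theta_q f)(x)=f(z)$ precisely when $x=pqz$, which is $(\theta_{pq}f)(x)$, so $\theta_p\theta_q=\theta_{pq}$. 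I would also note that $G$ is a countable discrete group, being a countable union (over finite subsets of $P$) of sets of finitely supported $G_0$-valued functions, so the cardinality hypotheses of Definition~\ref{def:ADS} hold.

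The one step that carries the content is the identification of the images $\theta_p(G)$ with the support subgroups. I would show $\theta_p(G)=\{f\in G\mid\supp(f)\subseteq pP\}$: the inclusion $\subseteq$ is the support computation above, while for $\supseteq$, given $f$ supported on $pP$ one has $f=\theta_p(h)$ with $h(y):=f(py)$, and $h\in G$ because $y\mapsto py$ is injective (left cancellation), so $h$ has finite support. Granting this, $\theta_p(G)\cap\theta_q(G)=\{f\in G\mid\supp(f)\subseteq pP\cap qP\}$ for all $p,q\in P$, and whenever $r\in P$ satisfies $pP\cap qP=rP$ this set is exactly $\{f\in G\mid\supp(f)\subseteq rP\}=\theta_r(G)$, which is precisely \eqref{eq:preserving the order}. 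Hence $(G,P,\theta)$ is an algebraic dynamical system.

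I do not expect a real obstacle; the only places needing a little care are using left cancellation to make the shift well-defined and single-valued, and the surjectivity-onto-the-support-subgroup half of the image identification — it is exactly that half which makes order preservation come out automatically. If one prefers a more conceptual phrasing, $\theta_p$ is, under the canonical identifications, the inclusion $\bigoplus_P G_0\hookrightarrow\bigoplus_P G_0$ induced by the injection $P\to pP$, $y\mapsto py$, and the intersection of the resulting subgroups is governed by the intersection of the index sets $pP$ and $qP$, which is where the right LCM structure of $P$ enters.
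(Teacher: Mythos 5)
Your proposal is correct and follows essentially the same route as the paper: the shift $(\theta_p f)(x)=f(y)$ for $x=py$ is exactly the paper's formula $(\theta_p((g_q)_q))_r=\chi_{pP}(r)\,g_{p^{-1}r}$, and the key observation $\theta_p(G)=\{f\mid \supp(f)\subseteq pP\}$ is the paper's identification $\theta_p(G)=0\oplus\bigl(\bigoplus_{q\in pP}G_0\bigr)$, from which order preservation follows as you describe. Your write-up simply supplies the routine verifications (left cancellation, injectivity, the action law) that the paper leaves as apparent.
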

\begin{proof}
The action $\theta$ is given by $(\theta_p((g_q)_{q \in P}))_r =
\chi_{pP}(r)~g_{p^{-1}r} \text{ for all } p,r \in P$. It is apparent that
$\theta_p$ is an injective group endomorphism for all $p \in P$. Observing
that $\theta_p(G) = 0 \oplus \bigl(\bigoplus_{q \in pP} G_0\bigr)$, we see that $\theta$
also respects the order on $P$. Hence we get an algebraic dynamical system in
the sense of Definition~\ref{def:ADS}.
\end{proof}

\begin{example}\label{ex:Toeplitz algebra for right LCM}
If we apply the construction of Proposition~\ref{prop:shift over right LCM} to
$G_0 = \Z/n\Z$ and $P = \N$, the resulting algebra $\AA[G,P,\theta]$ is
canonically isomorphic to the Toeplitz extension $\TT_n$ of the Cuntz algebra
$\OO_n$, compare \cite[Example 3.30]{Sta1}.
\end{example}

\begin{example}\label{ex:ADS as sum of ADS}\label{ex:strong ind with inf index}
Suppose we are given a family $(G^{(i)},P,\theta^{(i)})_{i\in I}$ of algebraic
dynamical systems. Then we can consider $G := \bigoplus_{i \in I} G^{(i)}$ and
let $P$ act on $G$ component-wise, i.e. $\theta_p (g_i)_{i \in I} :=
(\theta^{(i)}_p(g_i))_{i \in I}$. In this way we get a new algebraic dynamical
system $(G,P,\theta)$.
\end{example}

\begin{remark}\label{rem:decomposition of ADS}
It is also possible to decompose an algebraic dynamical system $(G,P,\theta)$.
Let $G = \bigoplus_{i \in I} G_i$ be a decomposition of $G$ into
$\theta$-invariant subgroups, i.e. $\theta_p(G_i) \subset G_i$ for all $i \in
I$. Then $(G_i,P,\theta_{|G_i})$ defines an algebraic dynamical system for
each $i \in I$. Indeed, it is easy to see that the subsystem
$(G_i,P,\theta_{|G_i})$ inherits the required properties from $(G,P,\theta)$.
Besides, we note that the procedure from Example~\ref{ex:ADS as sum of ADS} is
inverse to this method of decomposing.
\end{remark}

\begin{example}\label{ex:nonisom twists by -1 I}
For $G = \Z$, the injective group endomorphisms are given by multiplication with elements from $\Z^\times$. For $\PP \subset \Z^\times$, let $|\PP\rangle$ denote the multiplicative subsemigroup of $\Z^\times$ with identity generated by $\PP$. Let us consider subsemigroups $P_1 = | 2,3\rangle$, $P_2 = | -2,3\rangle$, $P_3 = | 2,-3\rangle$ and $P_4 = | -2,-3\rangle$.  Each of them gives rise to an irreversible algebraic dynamical system
$(\Z,P_i,\cdot)$. It is an instructive exercise to show that the
corresponding semidirect products $\Z \rtimes P_i$ are mutually non-isomorphic. However, if we
add $-1$ as a generator to each of $P_i$, we always arrive at $P_5 = |-1,2,3\rangle$.
Moreover, the corresponding embedding $\Z \rtimes P_i \subset \Z \rtimes P_5$ yields injective,
admissible morphisms from $(\Z,P_i,\theta)$ into $(\Z,P_5,\cdot)$. Hence, by
Corollary~\ref{cor:functoriality for adm morphism of ADS} there are canonical $*$-homomorphisms
from $\AA[\Z,P_i,\theta]$ to $\AA[\Z,P_5,\theta]$ for $i=1,\dots, 4$.

In this example the left regular representation implements an isomorphism between the full and the reduced semigroup $C^*$-algebras for $\Z \rtimes P_i$ for each $i=1,\dots, 5$, see \cite[Example 6.3]{BLS1} or \cite{Li1}.
Thus by Proposition~\ref{prop:inj and surj of the induced *-hom}, the
$C^*$-algebras $\AA[\Z,P_i,\cdot], i =1,\dots,4$ are proper subalgebras of $\AA[\Z,P_5,\cdot]$ in a natural way. Note
that the families of constructible right ideals $\JJ(\Z \rtimes P_i)$ are isomorphic for all five
semidirect products. Thus, their diagonal subalgebras are canonically
isomorphic, see \cite{Li1}. Since this identification is compatible with translation by $\Z$,
we also have an isomorphism on the level of the core subalgebras $\FF \cong
\DD \rtimes \Z$, a fact that follows from \cite[Definition 3.12]{BLS1} and \cite[Theorem A.5]{Sta1}.

It is therefore natural to ask if, and in how far the
$C^*$-algebras $\AA[\Z,|2,3\rangle,\cdot], \AA[\Z,|-2,3\rangle,\cdot],
\AA[\Z,|2,-3\rangle,\cdot], \AA[\Z,|-2,-3\rangle,\cdot]$ and
$\AA[\Z,|-1,2,3\rangle,\cdot]$ are different. Corollary~\ref{cor-Kgroups-red-semigroup-algebra}
shows that $K_\ast(\AA[\Z,|P_i\rangle,\cdot])$ is isomorphic to $\Z \oplus \Z$
for $i=1, \dots, 4$, and that
\[K_*\bigl(\AA[\Z,|P_5\rangle,\cdot]\bigr) = K_*\bigl(C^*(\Z \rtimes \Z/2\Z)\bigr).\]
Thus, $K$-theory distinguishes  $\AA[\Z,|P_5\rangle,\cdot]$ from any of the  $C^*$-algebras
$\AA[\Z,|P_i\rangle,\cdot]$ for $1, \dots, 4$. However, finer invariants are needed in
order to determine the exact relationship between those four $C^*$-algebras.
\end{example}

\section*{References}
\begin{biblist}
\bibselect{bib}
\end{biblist}
\end{document}